\newcommand{\RN}{\mathbb{R}^N}
\newcommand{\e}{\varepsilon}
\newcommand{\rd}{\mathrm{d}}
\newcommand{\cL}{\mathcal{L}}
\newcommand{\la}{\left\langle}
\newcommand{\ra}{\right\rangle}
\def\ov#1{\overline{#1}}
\def\wt#1{\widetilde{#1}}
\def\ud#1{\underline{#1}}
\DeclareMathOperator{\supp}{supp}
\DeclareMathOperator{\sgn}{sgn}
\DeclareMathOperator{\card}{card}
\definecolor{RoyalBlue}{cmyk}{1, 0.5, 0, 0}
\newcommand{\ef}{\eqref}
\newcommand{\R}{\mathbb{R}}
\newcommand{\N}{\mathbb{N}}
\newcommand{\intN}{\int_{\R^N}}
\newcommand{\G}{\mathcal{G}}
\newcommand{\dis}{\displaystyle}
\theoremstyle{plain}
\newtheorem{theorem}{Theorem}[section]
\newtheorem{lemma}[theorem]{Lemma}
\newtheorem{proposition}[theorem]{Proposition}
\theoremstyle{definition}
\theoremstyle{remark}
\newtheorem{remark}[theorem]{Remark}
\crefname{theorem}{Theorem}{Theorem}
\crefname{lemma}{Lemma}{Lemma}
\crefname{proposition}{Proposition}{Proposition}
\crefname{corollary}{Corollary}{Corollary}
\crefname{definition}{Definition}{Definition}
\crefname{remark}{Remark}{Remark}
\crefname{section}{Section}{sections}
\crefname{subsection}{Subsection}{subsections}
\numberwithin{equation}{section}
\providecommand{\keywords}[1]
{\small	\textit{Keywords and phrases:} #1}
\title[Locally superlinear Schr\"odinger equation]
{Existence and asymptotic behavior of positive solutions for a class of locally superlinear Schr\"odinger equation}
\author[S.~Adachi]{Shinji Adachi}
\author[N.~Ikoma]{Norihisa Ikoma}
\author[T.~Watanabe]{Tatsuya Watanabe}
\address[S.~Adachi]{\newline\indent
Department of Mathematical and Systems Engineering, 
\newline\indent 
Faculty of Engineering, Shizuoka University,
\newline\indent 
3-5-1 Johoku, Naka-ku, Hamamatsu, 432-8561, Japan}
\email{adachi@shizuoka.ac.jp}
\address[N.~Ikoma]{\newline\indent
Department of Mathematics, 
\newline\indent 
Faculty of Science and Technology, Keio University, 
\newline\indent 
Yagami Campus: 3-14-1 Hiyoshi, Kohoku-ku, Yokohama, Kanagawa, 223-8522, Japan}
\email{ikoma@math.keio.ac.jp}
\address[T.~Watanabe]{\newline\indent 
Department of Mathematics, 
\newline\indent 
Faculty of Science, Kyoto Sangyo University,
\newline\indent
Motoyama, Kamigamo, Kita-ku, Kyoto-City, 603-8555, Japan}
\email{tatsuw@cc.kyoto-su.ac.jp}
\thanks{}
\subjclass[2020]{35J20, 35B40, 35J61, 58E40}
\date{}
\keywords{Variational method, a priori estimate, asymptotic behavior, interaction estimate}
\begin{document}

\begin{abstract}
This paper treats the existence of positive solutions of $-\Delta u + V(x) u = \lambda f(u)$ in $\mathbb{R}^N$. 
Here $N \geq 1$, $\lambda > 0$ is a parameter and $f(u)$ satisfies conditions only in a neighborhood of $u=0$. 
We shall show the existence of positive solutions with potential of trapping type or $\mathcal{G}$-symmetric potential 
where $\mathcal{G} \subset O(N)$. 
Our results extend previous results \cite{AW2,CW05,DMS} as well as 
we also study the asymptotic behavior of a family $(u_\lambda)_{\lambda \geq \lambda_0}$ of positive solutions as $\lambda \to \infty$. 
\end{abstract}

\maketitle


\section{Introduction}
\label{section:1}

In this paper, we consider the following nonlinear Schr\"odinger equation:
	\begin{equation}\label{Plam}
		-\Delta u + V(x) u = \lambda f(u) \quad \text{in} \ \RN, 
		\quad u \in H^1(\RN),
		\tag{$P_\lambda$}
	\end{equation}
where $N \geq 1$ and $\lambda>0$ is a parameter.
Problem \ef{Plam} appears in the study of standing waves $\psi(t,x)=e^{-i \omega t} u(x)$ of the 
nonlinear Schr\"odinger equation:
\[
i \frac{\partial \psi}{\partial t} +\Delta \psi -\tilde{V}(x) \psi +\lambda f(\psi) =0  
\]
where $\tilde{V}(x)=V(x)-\omega$ and $f$ is assumed to satisfy 
$f(e^{i\theta}u)=e^{i \theta} f(u)$ for any $u$, $\theta \in \R$.
The existence of positive solutions for \ef{Plam} has been considered intensively 
since the work of Rabinowitz \cite{Ra92} 
and plays an important role in the study of the stability of standing waves. 
For more information, we refer to \cite{AS,BaWaWi05,Caz,DeGR,St08} and references therein. 
Although huge number of results has been obtained,
there still remains a gap of sufficient conditions on $f$ for the existence of positive solutions 
between \ef{Plam} and the autonomous problem (see \cite{BL83,BGK83}):
\[
-\Delta u =g(u) \quad \hbox{in} \   \R^N.
\]

	When we employ the variational method to find solutions of \eqref{Plam}, 
we usually require the nonlinear term $f$ to have at most $H^1$-critical growth at infinity. 
The existence result for \ef{Plam} with $V(x) \equiv 0$ on a bounded domain $\Omega$ in the $H^1$-supercritical case 
was studied in \cite{CW05}. More precisely, in \cite{CW05}, for a bounded domain $\Omega$ with smooth boundary, 
the authors considered the equation 
	\begin{equation}\label{eq:1.1}
		-\Delta u = \lambda f(u) \quad \text{in} \ \Omega, \quad u = 0 \quad \text{on} \ \partial \Omega
	\end{equation}
under the following conditions on $f(s)$: there exist $\delta > 0$ and $\beta,\gamma,\mu \in (2,2^\ast)$ where 
$2^\ast := 2N/(N-2)$ for $N \geq 3$ and $2^\ast := \infty$ for $N=1,2$ such that 
$f \in C^1(\R)$ satisfies 
	\begin{equation}\label{eq:1.2}
		\limsup_{|s| \to 0} \frac{f(s)}{|s|^{\gamma}} < \infty, \quad 
		\liminf_{|s| \to 0} \frac{F(s)}{|s|^\beta} > 0, \quad 
		0 < \mu F(s) \leq s f(s) \quad \text{for $0 < |s| < \delta$},
	\end{equation}
where $F(s) := \int_0^s f(t) \, \rd t$.  Assuming some further conditions on $\beta$ and $\gamma$, 
they proved the existence of positive, negative and sign-changing solution of \eqref{eq:1.1} for sufficiently large $\lambda$. 
See also \cite{CL05} for a multiplicity result with more restricted nonlinearity.

	On the other hand, \eqref{Plam} was studied in \cite{DMS} 
and the authors obtained the same result to \cite{CW05} for \eqref{Plam} by assuming the following conditions: 
$N \geq 3$, $f \in C^1(\R)$ satisfies \eqref{eq:1.2} and $V(x)$ satisfies (V1) and one of (V4'a) and (V4'b) below: 
\begin{enumerate}
	\item[(V1)] 
	$V \in C(\RN)$ and there exists $V_0>0$ such that $\dis \inf_{x \in \RN} V(x)=V_0$.
	\item[(V4'a)]
	For each $M>0$, $\cL^N \left( \Set{ x \in \RN | V(x) \leq M } \right) < \infty$ 
	where $\cL^N(A)$ stands for the $N$-dimensional Lebesgue measure of $A \subset \RN$. 
	\item[(V4'b)]
	$V^{-1} \in L^1(\RN)$. 
\end{enumerate}

	Recently, in \cite{AW2}, the existence of $\mathcal{G}$-symmetric solutions of \eqref{Plam} was established for $N \geq 2$. 
For the precise result in \cite{AW2}, see a paragraph after \cref{theorem:1.5}.

	This paper is motivated by \cite{CW05,DMS,AW2}. Our first aim is to relax the conditions in \eqref{eq:1.2}. 
Especially, we refine the last condition in \eqref{eq:1.2}, which is called the \emph{local Ambrosetti-Rabinowitz condition}. 
Here we concentrate on the existence of positive solutions of \eqref{Plam} and 
treat the case where $V$ is \emph{of trapping type}. 
Our second aim is to investigate the behavior of positive solutions $(u_\lambda)$ of \eqref{Plam} as $\lambda \to \infty$. 
To the best of the authors' knowledge, this problem is not studied yet. 
Thirdly, we also relax the conditions in \cite{AW2} on $f$ and $V$ for the existence of $\mathcal{G}$-symmetric solution. 
In particular, in \cite{AW2}, conditions on $\nabla V(x)$ are imposed, however, 
in this paper we do not require any condition on $\nabla V(x)$. 
In addition, we shall treat the case $N=1$ where we need some different analysis from the case $N \geq 2$. 
To the best of our knowledge, in case of $N=1$, 
the existence of even positive solutions of \eqref{Plam} with locally superlinear nonlinearity is not clear in the literature.

	In what follows, we state our main results.
For the potential $V$, through out this paper, we always assume (V1) above. 
As for the nonlinear term $f$, we impose the following conditions.
\begin{enumerate}
	\item[(f1)]
	$f \in C(\R)$ and $f(s) = 0$ for any $s \leq 0$. 
	\item[(f2)]
	$\displaystyle \lim_{s \to 0^+} \frac{f(s)}{s} = 0$. 
	\item[(f3)]
	There exists $ p \in \left( 2 , 2^\ast \right)$ such that 
	$\displaystyle \liminf_{s \to 0^+} \frac{F(s)}{s^p} > 0$. 
	\item[(f4)]
	There exists $s_1 > 0$ such that $\dis \frac{1}{2} f(s) s - F(s) > 0$ for all $s \in (0,s_1]$. 
\end{enumerate}

We first consider the case where the potential $V$ is of \emph{trapping type} and prove the following result:

\begin{theorem}\label{theorem:1.1}
	Suppose {\rm (V1)} and {\rm (f1)--(f4)}. 
	Furthermore, we assume either {\rm (i)} or {\rm (ii)} below holds: 
	\begin{enumerate}
	\item[{\rm (i)}] 
		\begin{enumerate}
		\item[{\rm (V2)}] 
		There exists $V_{\infty}>0$ such that $\dis \lim_{|x| \to \infty} V(x)=V_{\infty}$. 
		\item[{\rm (V3)}]
		$V(x) \le V_{\infty}$ for all $x\in \R^N$.
		\end{enumerate}
	\item[{\rm (ii)}] 
		\begin{enumerate}
		\item[{\rm (V4)}]
		There exists $r_0>0$ such that 
		for any $M>0$
			\[
				\lim_{|y| \to \infty} \cL^N \left( \Set{ x \in \RN | \, | x - y | \leq r_0, \ V(x) \leq M } \right) = 0.
			\]
		\end{enumerate}
	\end{enumerate}
	Then there exists $\lambda_0>0$ such that for $\lambda \geq \lambda_{0}$, 
	\eqref{Plam} admits a positive solution $u_\lambda$. 
\end{theorem}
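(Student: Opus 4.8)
The plan is to find $u_\lambda$ as a critical point of the rescaled energy functional obtained by a standard truncation argument, exploiting that $f$ is only controlled near $s=0$. First I would fix $s_1>0$ from (f4) and replace $f$ by a globally defined nonlinearity $\tilde f$ that agrees with $f$ on $[0,s_1]$, vanishes for $s\le 0$, and grows, say, like a subcritical power above $s_1$; this gives a modified problem whose energy $I_\lambda \in C^1(H^1(\R^N))$ has the mountain pass geometry for every $\lambda>0$, using (f1)--(f3) for the geometry near the origin (the term $\lambda\int F$ dominates because of (f3)) and (V1) to keep the quadratic form $\int |\nabla u|^2 + V u^2$ equivalent to the $H^1$-norm. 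The crucial point is an $L^\infty$ a priori bound: one shows that the mountain pass level $c_\lambda$ satisfies $c_\lambda \to 0$ as $\lambda\to\infty$ (by testing the geometry with a fixed small bump), and then, via (f4) together with a Brezis--Kato / Moser iteration, that any Palais--Smale sequence at level $c_\lambda$ produces a solution $u_\lambda$ with $\|u_\lambda\|_\infty \le s_1$ once $\lambda \ge \lambda_0$. Then $u_\lambda$ solves the original \eqref{Plam}, and positivity follows from $\tilde f(s)=0$ for $s\le 0$ and the strong maximum principle.

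The heart of the matter is the compactness needed to pass from a Palais--Smale sequence to an actual critical point, and this is exactly where hypotheses (i) or (ii) enter. Under case (ii), condition (V4) is a vanishing-at-infinity-in-measure condition that prevents a PS sequence from ``escaping to infinity'': one shows via a covering argument and Lions-type vanishing lemma that, for a PS sequence $(v_n)$ with $\|v_n\|_\infty \le s_1$, either $v_n \to 0$ (excluded because $c_\lambda>0$) or $v_n$ is tight, hence converges strongly in $H^1$. Under case (i), where $V(x)\to V_\infty$ and $V\le V_\infty$ everywhere, compactness is recovered by comparing $c_\lambda$ with the mountain pass level $c_\lambda^\infty$ of the limit problem $-\Delta u + V_\infty u = \lambda\tilde f(u)$: the inequality $V\le V_\infty$ with strict inequality somewhere forces $c_\lambda < c_\lambda^\infty$, and the splitting lemma (profile decomposition) then rules out dichotomy, again yielding strong convergence. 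I would isolate these two compactness statements as the two separate sub-cases and treat them in turn.

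I expect the main obstacle to be making the $L^\infty$-estimate $\|u_\lambda\|_\infty\le s_1$ uniform in $\lambda$ and genuinely quantitative: one needs the Moser iteration constants to be controlled in terms of $c_\lambda$ only, so that $c_\lambda\to 0$ actually translates into a smallness of the sup-norm. The delicate input here is (f4), i.e. $\tfrac12 f(s)s - F(s)>0$ on $(0,s_1]$ — this is a \emph{one-sided, local} replacement for the Ambrosetti--Rabinowitz condition and is what guarantees that the Nehari-type identity $I_\lambda(u_\lambda) - \tfrac12 I_\lambda'(u_\lambda)[u_\lambda] = \lambda\int (\tfrac12 f(u_\lambda)u_\lambda - F(u_\lambda))$ controls the relevant $L^2$ or $L^p$ mass of the solution from above by $c_\lambda$, hence lets us bootstrap. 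Once that estimate is in place, everything reduces to the two compactness arguments above, and the conclusion $u_\lambda$ solves \eqref{Plam} for $\lambda\ge\lambda_0$ is immediate. The case $N=1$ requires only minor changes, since $H^1(\R)\hookrightarrow L^\infty(\R)$ makes the sup-norm bound even more direct, though one must be slightly careful that the truncated nonlinearity is still subcritical (which is automatic for $N=1$).
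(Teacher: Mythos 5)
Your proposal is correct and follows essentially the same route as the paper: truncate $f$ above the region where (f1)--(f4) give control, establish the mountain pass geometry, use the local condition (f4) through the identity $I_\lambda(u)-\tfrac12 I_\lambda'(u)u$ to bound the superlevel-set $L^p$ mass by $c_\lambda=O(\lambda^{-2/(p-2)})$, run a Moser iteration to get $\|u_\lambda\|_{L^\infty}\to 0$, and recover compactness either by the energy comparison $c_\lambda<c_{\lambda,\infty}$ with a splitting lemma (case (i)) or from the compactness forced by (V4) (case (ii), where the paper invokes the compact embedding $H_V\hookrightarrow L^q$ rather than a Lions-type vanishing argument, a cosmetic difference). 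You correctly identify the two genuinely delicate points --- designing the truncation so that $\tfrac12 \tilde f(s)s-\tilde F(s)$ stays bounded below by a positive power for large $s$ (which also yields boundedness of the Palais--Smale--Cerami sequences), and making the sup-norm estimate quantitative in $c_\lambda$ --- which are exactly the contents of the paper's Lemma 2.1 and Proposition 3.6.
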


	\begin{remark}\label{remark:1.2}
		\begin{enumerate}
		\item[(i)] 
		The conditions (f1)--(f4) are derived from \eqref{eq:1.2} and 
		an example which satisfies (f1)--(f4) but does not \eqref{eq:1.2} is 
			\[
				F(s) = \frac{s^2}{-\log s}, \quad f(s) = \frac{2s}{-\log s} + \frac{s}{(\log s)^2} 
				\quad \text{for $0< s \ll 1$}. 
			\]
		See \cite{AW2} for other typical examples of $f$ satisfying (f1)--(f4). 
		\item[(ii)]
		The condition (V4) can be found in \cite[Section 3]{BaWaWi05} and 
		it is used for compact embedding (see \cite[Lemma 3.1]{BaWaWi05} and \cref{lemma:3.7} below ). 
		Remark also that (V4'a) or (V4'b) implies (V4). 
		Hence, our result is a generalization of \cite{DMS} for the existence of positive solutions. 
		\item[(iii)]
		Using ideas of proof of \cref{theorem:1.1}, 
		we may prove the existence of positive solutions of \eqref{eq:1.1} under (f1)--(f4). 
		Hence, we may extend the result of \cite{CW05} regarding the existence of positive solutions. 
		See \cref{section:3.3}. 
		\end{enumerate}

	\end{remark}

	Next we investigate the asymptotic behavior of $(u_\lambda)_{\lambda \geq \lambda_{0}}$ as $\lambda \to \infty$. 
For this purpose, we introduce the following condition: 

\begin{enumerate}
	\item[{\rm (f5)}]
	There exists $p \in \left( 2 , 2^\ast \right) $ such that 
	$\displaystyle \lim_{s \to 0^+} \tfrac{f(s)}{s^{p-1}} = 1$. 
\end{enumerate}

		We remark that (f5) implies (f2), (f3) and (f4). 
Then we obtain the following result.

\begin{theorem}\label{theorem:1.3}
	Assume {\rm (V1)--(V3)}, {\rm (f1)} and {\rm (f5)}. 
	Let $(u_\lambda)_{\lambda \geq \lambda_0}$ be a family of positive solutions of \eqref{Plam} obtained in \cref{theorem:1.1}. 
		\begin{enumerate}
			\item[{\rm (a)}]
			If $V (x) \not \equiv V_\infty$, then for any $(\lambda_n)_{n=1}^\infty$ with $\lambda_n \to \infty$, 
				there exists a subsequence $(\lambda_{n_k})_{k=1}^\infty$ such that 
					\[
						v_k(x) := \lambda_{n_k}^{ 1/(p-2) } u_{\lambda_{n_k}} (x) \to v_\infty(x) \quad 
						\text{strongly in $H^1(\RN)$},
					\]
				where $v_\infty \in H^1(\RN)$ is a positive solution of 
					\[
						-\Delta v + V(x) v = v^{p-1} \quad \text{in} \ \RN.
					\]
				\item[{\rm (b)}]
				If $V(x) \equiv V_\infty$, then there exists $(x_\lambda)_{\lambda \geq 1} \subset \RN$ such that
				$ \| v_\lambda (\cdot + x_\lambda) - \omega_0 \|_{H^1(\RN)} \to 0$ as $\lambda \to \infty$ 
				where $v_\lambda (x) := \lambda^{1/(p-2)} u_{\lambda} (x)$ and $\omega_0 \in H^1(\R^N)$ is the unique positive radial solution of 
					\begin{equation}\label{eq:1.3}
						-\Delta u + V_\infty u = u^{p-1} \quad \text{in} \ \RN.
					\end{equation}
			\end{enumerate}
		Furthermore, assertion {\rm (a)} holds true if we replace {\rm (V2)} and {\rm (V3)} by {\rm (V4)}. 
	\end{theorem}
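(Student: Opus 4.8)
The plan is to rescale so that (f5) reduces the nonlinearity to a pure power as $\lambda \to \infty$. Put $v_\lambda := \lambda^{1/(p-2)} u_\lambda$; since $\lambda \cdot \lambda^{1/(p-2)} \cdot \lambda^{-(p-1)/(p-2)} = 1$, \eqref{Plam} becomes
\[
  -\Delta v_\lambda + V(x) v_\lambda = g_\lambda(v_\lambda), \qquad
  g_\lambda(s) := \lambda^{\,1 + 1/(p-2)}\, f\!\big( \lambda^{-1/(p-2)} s \big) .
\]
Writing $f(s) = s^{p-1}(1 + \omega(s))$ for $s > 0$ with $\omega(s) \to 0$ as $s \to 0^+$ (this is (f5)), one has $g_\lambda(s) = s^{p-1}(1 + \omega(\lambda^{-1/(p-2)} s))$ for $s > 0$, which tends to $(s^+)^{p-1}$ uniformly for $s$ in bounded sets. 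The corresponding limit functional is $J(v) := \tfrac12 \int_{\RN}(|\nabla v|^2 + V v^2) - \tfrac1p \int_{\RN}(v^+)^p$, and $\lambda^{2/(p-2)} I_\lambda(u_\lambda) = J(v_\lambda) + o(1)$ once the $L^\infty$ bound below is in hand. Writing $c_\lambda$ for the mountain--pass level from the proof of \cref{theorem:1.1} (attained at $u_\lambda$) and $c_\ast$ for the least--energy level of $J$, inserting the rescaled path $t \mapsto t\,\lambda^{-1/(p-2)} \phi$ (with $\phi$ fixed so that $\max_{t>0} J(t\phi)$ is close to $c_\ast$) into the definition of $c_\lambda$ and letting $\lambda \to \infty$ gives $\limsup_{\lambda \to \infty} \lambda^{2/(p-2)} c_\lambda \le c_\ast$.

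The \emph{main obstacle} is the pair of uniform bounds $\sup_{\lambda \ge \lambda_0} \|v_\lambda\|_{H^1(\RN)} < \infty$ and $\|u_\lambda\|_{L^\infty(\RN)} \to 0$. Testing the equation for $v_\lambda$ with $v_\lambda$, using $g_\lambda(s)\,s \le K s^p$ on the range in question (valid since $f(t)/t^{p-1}$ is bounded near $0$ by (f5)) and the boundedness of $\lambda^{2/(p-2)} c_\lambda$ gives $\|v_\lambda\|_{H^1}^2 \le C\|v_\lambda\|_{H^1}^p + o(1)$, hence $\delta \le \|v_\lambda\|_{H^1} \le C'$ for fixed $\delta, C'$; in particular $v_\lambda$ does not degenerate. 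For the $L^\infty$ decay, recall that $\|u_\lambda\|_{L^\infty}$ is a priori bounded by a fixed constant (by the construction in \cref{theorem:1.1}), so $g_\lambda(v_\lambda) \le K v_\lambda^{p-1}$ holds \emph{pointwise} with $K$ independent of $\lambda$; since $p < 2^\ast$, $V \ge V_0 > 0$, and $\|v_\lambda\|_{H^1}$ is bounded, Moser/De~Giorgi iteration gives $\|v_\lambda\|_{L^\infty} \le C$ uniformly in $\lambda$, whence $\|u_\lambda\|_{L^\infty} = \lambda^{-1/(p-2)} \|v_\lambda\|_{L^\infty} \to 0$. This is where the purely local structure of $f$ is used, and it is what makes the passage from $g_\lambda$ to the pure power legitimate.

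For (a) under (V1)--(V3) with $V \not\equiv V_\infty$: given $\lambda_n \to \infty$, $v_{\lambda_n}$ is bounded in $H^1(\RN)$, so $v_{\lambda_n} \rightharpoonup v_\infty \ge 0$ up to a subsequence. Vanishing is ruled out because $v_{\lambda_n} \to 0$ in $L^p(\RN)$ would force $\|v_{\lambda_n}\|_{H^1} \to 0$ via the equation, against $\|v_{\lambda_n}\|_{H^1} \ge \delta$. Escape of mass to infinity is excluded exactly as in the classical existence proof of a ground state for $-\Delta v + V v = v^{p-1}$: from $V \le V_\infty$, $V \not\equiv V_\infty$ one has the strict inequality $c_\ast = c_V < c_{V_\infty}$ between the least--energy levels of $V$ and $V_\infty$, so any mass lost at infinity would solve the problem at infinity and contribute at least $c_{V_\infty}$ to $\lim_n \lambda_n^{2/(p-2)} c_{\lambda_n} \le c_\ast$, a contradiction. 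Hence $v_\infty \not\equiv 0$ is a weak --- and then, by elliptic regularity and the strong maximum principle, a classical positive --- solution of $-\Delta v + V(x) v = v^{p-1}$; the absence of lost mass gives $\int_{\RN} v_{\lambda_n}^p \to \int_{\RN} v_\infty^p$, and the energy identity upgrades convergence to strong convergence in $H^1(\RN)$ (with $\lambda_n^{2/(p-2)} c_{\lambda_n} \to c_\ast = J(v_\infty)$). When (V2)--(V3) are replaced by (V4), the relevant embedding into $L^p(\RN)$ is compact by \cref{lemma:3.7}, so weak convergence is automatically strong and the problem at infinity never arises; this gives the final sentence of the theorem.

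For (b), $V \equiv V_\infty$: now $J$ is translation invariant. Fix $\lambda_n \to \infty$; as above $\delta \le \|v_{\lambda_n}\|_{H^1} \le C'$ and $v_{\lambda_n}$ does not vanish, so by the concentration--compactness principle there are $(x_n) \subset \RN$ and a subsequence with $v_{\lambda_n}(\cdot + x_n) \rightharpoonup v \not\equiv 0$. Passing to the limit in the translated equation (using $\|u_{\lambda_n}\|_{L^\infty} \to 0$) shows $v$ solves \eqref{eq:1.3}, so $J(v) \ge c_\ast$, while $J(v) \le \liminf_n J(v_{\lambda_n}(\cdot + x_n)) = \liminf_n \lambda_n^{2/(p-2)} c_{\lambda_n} \le c_\ast$; hence $v$ is a least--energy solution of \eqref{eq:1.3} and the convergence is strong in $H^1(\RN)$. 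By the Gidas--Ni--Nirenberg symmetry theorem and Kwong's uniqueness result, $v$ equals a translate of the unique positive radial solution $\omega_0$ of \eqref{eq:1.3}; absorbing that translate into $x_n$ yields $\|v_{\lambda_n}(\cdot + x_n) - \omega_0\|_{H^1(\RN)} \to 0$, and since $\omega_0$ is independent of the sequence, a routine argument promotes this to $\|v_\lambda(\cdot + x_\lambda) - \omega_0\|_{H^1(\RN)} \to 0$ as $\lambda \to \infty$ for a suitable $(x_\lambda)_{\lambda \ge 1}$. In summary, the only genuinely new points beyond \cref{theorem:1.1} are the uniform estimate $\|u_\lambda\|_{L^\infty} \to 0$ and the level convergence $\lambda^{2/(p-2)} c_\lambda \to c_\ast$; the rest is the standard least--energy compactness argument.
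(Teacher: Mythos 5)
Your overall strategy --- rescale $v_\lambda=\lambda^{1/(p-2)}u_\lambda$, show $\lambda^{2/(p-2)}c_\lambda$ stays strictly below the level at infinity, run a splitting/concentration--compactness argument, exclude bubbles escaping to infinity via $c_V<c_{V_\infty}$ when $V\le V_\infty$, $V\not\equiv V_\infty$, and identify the limiting profile via Kwong's uniqueness --- is essentially the paper's (\cref{lemma:4.1} and the Claim in \cref{section:4}).

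There is, however, one genuine gap, and it is load-bearing: the uniform upper bound $\|v_\lambda\|_{H^1}\le C'$. From the inequality you display, $\|v_\lambda\|_{H^1}^2\le C\|v_\lambda\|_{H^1}^p+o(1)$ with $p>2$, one can only conclude the \emph{lower} bound $\|v_\lambda\|_{H^1}\ge\delta$; the inequality is satisfied by arbitrarily large norms, so ``hence $\le C'$'' does not follow. Yet everything downstream (the Moser iteration giving $\|v_\lambda\|_{L^\infty}\le C$, the extraction of weak limits, the splitting) rests on this bound. The standard repair is exactly what \cref{lemma:4.1} does: under (f5) the modified nonlinearity satisfies the global Ambrosetti--Rabinowitz inequality $0<\mu G_{m_0}(s)\le g_{m_0}(s)s$ for some $\mu\in(2,p)$ (\cref{lemma:2.1}(iii)), whence
\[
c_\lambda=I_\lambda(u_\lambda)-\tfrac{1}{\mu}I_\lambda'(u_\lambda)u_\lambda
\ \ge\ \Bigl(\tfrac12-\tfrac1\mu\Bigr)\int_{\RN}|\nabla u_\lambda|^2+V(x)u_\lambda^2\,\rd x,
\]
and multiplying by $\lambda^{2/(p-2)}$ together with the boundedness of $\lambda^{2/(p-2)}c_\lambda$ gives the bound on $\|v_\lambda\|_{H^1}$. (Equivalently, first bound $\int v_\lambda^p$ by $\lambda^{2/(p-2)}c_\lambda$ using $\tfrac12 g_{m_0}(s)s-G_{m_0}(s)\ge c\,s^p$ from \eqref{alignat:2.5}, then test the equation with $v_\lambda$.) With this repaired, the rest of your argument is sound. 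Two minor remarks: $\|u_\lambda\|_{L^\infty(\RN)}\to0$ is already part of the construction in \cref{theorem:1.1} (\cref{proposition:3.6}), so you need not re-derive it; and the paper only proves $\limsup\lambda^{2/(p-2)}c_\lambda\le\max_{t}J_\infty(T_0t\omega_0)$ rather than $\le c_\ast$, which is weaker than your claim but still strictly below $J_{\infty,\infty}(\omega_0)=c_{V_\infty}$, and that is all the compactness argument requires.
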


	\begin{remark}\label{remark:1.4}
As in \cref{theorem:1.1}, we may also prove similar results for \eqref{eq:1.1}. 
	\end{remark}

	Next we study the existence of $\mathcal{G}$-symmetric solution of \ef{Plam}. 
Let us denote by $O(N)$ the orthogonal group in $\R^N$ and suppose 
	\begin{equation}\label{g1}
		\left\{\begin{aligned}
			& \text{$\G \subset  O(N)$ is a finite subgroup}, 
			\\
			&\text{for any $x \in S^{N-1} := \Set{ x \in \RN | \, |x| = 1 } $ there exists $g \in \G$ such that $gx \neq x$}.
		\end{aligned}\right.
		\tag{g1}
	\end{equation}
Set 
\begin{equation} \label{eq:1.4}
	k_0 := \min_{x\in S^{N-1}} \card \Set{gx | g\in \G} \geq 2
\end{equation}
and choose $x_0\in S^{N-1}$ such that $\card \Set{ g x_0 | g \in \G } = k_0$. 
Put also 
\begin{equation} \label{eq:1.5}
	\Set{ gx_0 | g\in \G } = \Set{ e_1, \dots ,e_{k_0} } \subset S^{N-1} \ \hbox{and} \
	\alpha_0 := \min_{i \ne j} |e_i-e_j| \in (0,2]. 
\end{equation}
For $V(x)$, in addition to (V1), we assume 
\begin{enumerate}
\item[\rm(V5)] $V(gx)=V(x)$ for all $x \in \R^N$ and $g \in \G$.

\smallskip
\item[\rm(V6)] There exist $\alpha \in (\alpha_0 \sqrt{V_{\infty}}, \infty)$ and $\kappa>0$ such that 
\[
V(x) \leq V_\infty + \kappa e^{ - \alpha |x| } \quad \hbox{for all} \ x \in \RN.
\]
\end{enumerate}
For the nonlinearity $f$, we impose slightly stronger assumptions:
\begin{enumerate}
	\item[(f2')]
	There exists $\nu>0$ such that $\displaystyle \lim_{s \to 0+} \frac{f(s)}{s^{1+\nu}}=0$.
	\item[(f6)] 
	There exists $s_2>0$ such that $\displaystyle \frac{f(s)}{s}$ is nondecreasing in $(0,s_2)$. 
\end{enumerate}

Under these assumptions, we can show the following result.

\begin{theorem} \label{theorem:1.5}
Suppose $N \ge 2$, \eqref{g1}, {\rm (V1), (V2), (V5), (V6), (f1), (f2'), (f3), (f4)} and {\rm (f6)}.
Then there exists $\lambda_0>0$ such that \eqref{Plam} admits a $\mathcal{G}$-symmetric positive solution $u_{\lambda}$
for $\lambda \ge \lambda_0$.
\end{theorem}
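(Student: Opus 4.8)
The plan is to follow the locally‑superlinear scheme already used for \cref{theorem:1.1} (truncate $f$, solve the modified problem variationally, then remove the truncation by a uniform $L^\infty$‑bound), carrying the variational step out in the $\mathcal G$‑symmetric space and closing it by an interaction estimate that exploits the decay condition (V6). Concretely, I would first fix a small $\delta_0\in\bigl(0,\min\{s_1,s_2\}\bigr)$ and replace $f$ by $\tilde f\in C(\R)$ with $\tilde f\equiv 0$ on $(-\infty,0]$, $\tilde f\equiv f$ on $[0,\delta_0]$, and $\tilde f(s)=\bigl(f(\delta_0)/\delta_0^{q-1}\bigr)s^{q-1}$ for $s\ge\delta_0$, where $q\in(2,2^\ast)$ is chosen so small (possible since (f4) gives $F(\delta_0)<\tfrac12 f(\delta_0)\delta_0$) that the global Ambrosetti--Rabinowitz inequality $q\tilde F(s)\le s\tilde f(s)$ holds for all $s>0$; by (f6) and the choice of $\delta_0$, $s\mapsto\tilde f(s)/s$ stays nondecreasing on $(0,\infty)$, while (f2') and (f3) are inherited. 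The functional
\[
I_\lambda(u):=\tfrac12\intN\bigl(|\nabla u|^2+V(x)u^2\bigr)\,\rd x-\lambda\intN\tilde F(u)\,\rd x
\]
is then $C^1$ on $\h$, and by (V1), (V5) it is well defined and $\mathcal G$‑invariant, so by the principle of symmetric criticality it suffices to produce a nonzero critical point in the $\mathcal G$‑symmetric subspace $H^1_{\mathcal G}(\RN)$.

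On $H^1_{\mathcal G}(\RN)$ the functional $I_\lambda$ has the mountain‑pass geometry (the strict local minimum at $0$ uses (f2'), a point of negative energy uses (f3) and the subcriticality of $\tilde f$), giving a candidate level $c_\lambda>0$; the rescaling $u=\lambda^{-1/(p-2)}v$ shows $c_\lambda=O(\lambda^{-2/(p-2)})\to 0$. Let $c^\infty_\lambda$ be the least‑energy level of the autonomous problem $-\Delta u+V_\infty u=\lambda\tilde f(u)$ in $\RN$, attained by a unique positive radial ground state $\omega_\lambda$ which, by (f2'), decays like $e^{-\sqrt{V_\infty}|x|}$. The heart of the argument is the strict inequality $c_\lambda<k_0\,c^\infty_\lambda$. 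To prove it I would use as competitor the $\mathcal G$‑symmetrization of $\sum_{i=1}^{k_0}\omega_\lambda(\cdot-R e_i)$, with $e_1,\dots,e_{k_0}$ and $\alpha_0$ as in \eqref{eq:1.4}--\eqref{eq:1.5}, projected onto its Nehari manifold, and expand $I_\lambda$ as $R\to\infty$: the (negative) interaction among the $k_0$ bumps lowers the energy by an amount comparable to $e^{-\sqrt{V_\infty}\,\alpha_0 R}$ (the closest pair dominates), whereas by (V6) the potentially unfavourable term $\lambda\intN(V(x)-V_\infty)(\cdots)$ is $O\!\bigl(e^{-\min\{\alpha,\,2\sqrt{V_\infty}\}R}\bigr)$, hence negligible compared with the interaction exactly because $\alpha>\alpha_0\sqrt{V_\infty}$ and $\alpha_0\le 2$. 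Thus for $R$ large the competitor lies strictly below $k_0\,c^\infty_\lambda$.

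With this energy gap available, I would show that $I_\lambda|_{H^1_{\mathcal G}(\RN)}$ satisfies the Palais--Smale condition at every level in $(0,k_0\,c^\infty_\lambda)$: Palais--Smale sequences are bounded by the Ambrosetti--Rabinowitz inequality, a bounded one converges weakly to a $\mathcal G$‑symmetric critical point, and if the convergence is not strong a concentration--compactness analysis adapted to the symmetric setting forces the escaping mass to split into at least $k_0$ $\mathcal G$‑orbit translates, each of which, in the limit $|x|\to\infty$, solves the autonomous equation with potential $V_\infty$ and thus carries at least $c^\infty_\lambda$ of energy, contradicting that the level is below $k_0\,c^\infty_\lambda$; vanishing is excluded since the level is positive. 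Hence $c_\lambda$ is attained by a nonzero $\mathcal G$‑symmetric $u_\lambda$, and testing the equation against $u_\lambda^-:=\max\{-u_\lambda,0\}$ (legitimate since $\tilde f\equiv 0$ on $(-\infty,0]$) together with the strong maximum principle gives $u_\lambda>0$. Finally, since $c_\lambda\to 0$, the coercivity of $u\mapsto\intN(|\nabla u|^2+Vu^2)$ from (V1) forces $\|u_\lambda\|_{\h}\to 0$, and elliptic bootstrap (De Giorgi--Nash--Moser) for the subcritical equation $-\Delta u_\lambda+Vu_\lambda=\lambda\tilde f(u_\lambda)$ yields $\|u_\lambda\|_{L^\infty(\RN)}\to 0$; in particular $\|u_\lambda\|_\infty\le\delta_0$ once $\lambda\ge\lambda_0$, so $\tilde f(u_\lambda)=f(u_\lambda)$ and $u_\lambda$ solves \eqref{Plam}, which is the desired $\mathcal G$‑symmetric positive solution.

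The points I expect to be most delicate are the interaction estimate $c_\lambda<k_0\,c^\infty_\lambda$, which requires sharp two‑bump asymptotics matched against the exact decay threshold in (V6) (with the borderline case $\alpha_0=2$ deserving separate attention), and the symmetric Palais--Smale analysis below $k_0\,c^\infty_\lambda$, where the multiplicity forced by \eqref{g1} must be tracked precisely; the uniform $L^\infty$‑bound as $\lambda\to\infty$, routine in spirit, is precisely what makes the truncation harmless.
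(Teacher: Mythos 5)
Your overall architecture --- truncate $f$, work in $H^1_{\mathcal G}(\RN)$ via symmetric criticality, prove compactness of bounded Palais--Smale sequences below the level $k_0\,c_{\lambda,\infty}$, and close the gap with a $k_0$-bump interaction estimate in which (V6) with $\alpha>\alpha_0\sqrt{V_\infty}$ makes the potential term negligible against the attractive interaction --- is exactly the paper's (\cref{proposition:5.1} and \cref{proposition:5.2}), and that part of the proposal is sound. The fatal step is the truncation. You claim that, because (f4) gives $F(\delta_0)<\tfrac12 f(\delta_0)\delta_0$, you can choose $q\in(2,2^\ast)$ so that the \emph{global} Ambrosetti--Rabinowitz inequality $q\tilde F(s)\le s\tilde f(s)$ holds for all $s>0$. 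This is false under the hypotheses of the theorem: for $s\in(0,\delta_0]$ one has $\tilde f=f$, and (f4) only gives the strict inequality $2F(s)<sf(s)$, which does not self-improve to $qF(s)\le sf(s)$ for any fixed $q>2$ uniformly near $s=0$. The example in \cref{remark:1.2}, $F(s)=s^2/(-\log s)$, has $sf(s)/F(s)=2+1/(-\log s)\to 2^+$, so no such $q$ exists; dispensing with the local AR condition is precisely the point of the theorem, so this cannot be repaired by shrinking $\delta_0$. Consequently the boundedness of Palais--Smale sequences ``by the Ambrosetti--Rabinowitz inequality,'' and the later deduction $\|u_\lambda\|_{H^1}\to 0$ from $c_\lambda\to 0$ via $I_\lambda(u_\lambda)-\tfrac1q I_\lambda'(u_\lambda)u_\lambda\ge(\tfrac12-\tfrac1q)\|u_\lambda\|^2$, both collapse.

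The paper's substitute is a different truncation and a different inequality: $g_{m_0}=\xi_1 f+m_0\eta_1 s^{p-1}$ is built so that $\tfrac12 g_{m_0}(s)s-G_{m_0}(s)\ge c_\tau s^p$ for all $s\ge\tau$ (each $\tau>0$) while remaining positive for every $s>0$ (\cref{lemma:2.1}); boundedness is then obtained for Palais--Smale--\emph{Cerami} sequences (\cref{proposition:3.2}), the extra factor $(1+\|u_n\|)$ compensating for the absence of AR. The same inequality drives the $L^\infty$-smallness: your closing step ``elliptic bootstrap yields $\|u_\lambda\|_{L^\infty(\RN)}\to 0$'' is not routine, because the right-hand side carries the diverging factor $\lambda$; the paper's \cref{proposition:3.6} obtains the decay by first estimating $\|(u_\lambda-\tau)_+\|$ through the superlevel sets $[u_\lambda\ge\tau]$ and showing $\|h_\lambda\|_{L^{q_1}(\RN)}\to 0$ for some $q_1>N/2$ before applying Moser iteration. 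Two smaller points: the least-energy solution of the autonomous truncated problem need not be unique (only existence and \eqref{eq:3.10}--\eqref{eq:3.11} are used), and in the borderline case $\alpha_0=2$ it is the polynomial factors $\ell^{-(N-1)/2}$ versus $\ell^{-(N-1)}$ that rescue the interaction estimate --- this is exactly where the hypothesis $N\ge 2$ enters, and your proposal leaves it unresolved.
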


	In \cite{AW2}, the existence of $\G$-symmetric solution of \eqref{Plam} was obtained 
under $N \geq 3$, \eqref{g1}, (V1), (V2), (V5), (V6), (f1), (f6), 
	\begin{equation}\label{eq:1.6}
		\lim_{t \to 0^+} \frac{f(t)}{t^{\gamma-1}} = 0, \quad 0 < \liminf_{t \to 0^+} \frac{F(t)}{t^\beta}
	\end{equation}
for some $2 < \gamma \leq \beta < 2^\ast$ and one of the following conditions: 
	\begin{enumerate}
	\item[(i)]
		There exists $\alpha \in (0,1)$ such that 
		\[
		 \| (\nabla V(x) \cdot x )_+ \|_{L^{N/2} (\RN)} \leq 2 \alpha S_N
		 \]
		  where $a_+ := \max \{0,a\}$ and 
		$S_N$ denotes the best constant of Sobolev's inequality: $\| u \|_{L^{2^\ast} (\RN)} \leq C \| \nabla u \|_{L^2(\RN)}$. 
	\item[(ii)]
		There exists $\alpha \in (0,1)$ such that 
		\[
			\left(  \nabla V (x) \cdot x \right)_+ 
			\leq \frac{(N-2)^2}{2|x|^2} \alpha \quad 
			\text{for all $x \in \RN \setminus\{0\}$}.
		\]
	\end{enumerate}
\cite{AW2} also dealt with the case $N=2$ and 
the existence result was shown by assuming 
\eqref{g1}, (V1), (V2), (V5), (V6), (f1), (f6) and \eqref{eq:1.2} with $\gamma \leq \beta$. 
Notice that \eqref{eq:1.6} and (f6) (or \eqref{eq:1.2} and (f6)) yield 
the existence of $\wt{s} > 0$ such that $f(s) > 0$ on $(0, \wt{s} ]$ and for every $s \in (0,\wt{s}]$, 
	\[
		f(s) s - 2 F(s) = f(s) s - 2 \int_0^s \frac{f(\tau)}{\tau} \tau \, \rd \tau > f(s) s - 2 \frac{f(s)}{s} \int_0^s \tau \, \rd \tau = 0.
	\]
Therefore, (f1), (f2'), (f3), (f4) and (f6) in \cref{theorem:1.5} are weaker than those in \cite{AW2}. 
In addition, we succeed to drop the conditions on $\nabla V(x)$. 
Thus, \cref{theorem:1.5} extends \cite{AW2}.

Finally we investigate the existence of even positive solutions when $N=1$. 
As an application of \cref{theorem:1.3}, we prove the following result.

	\begin{theorem}\label{theorem:1.6}
		Suppose $N=1$, {\rm (V1), {\rm (V2)}, (f1), (f5)} and {\rm (f6)}. In addition, we assume that $V(x)$ satisfies 
			\begin{enumerate}
				\item[{\rm (V5')}]
				For all $x \in \R$, $V(-x) = V(x)$. 
				\item[{\rm (V6')}]
				There exist $\alpha \in \left( 2 \sqrt{V_\infty} , \infty \right)$ and $\kappa > 0$ such that 
					\[
						V(x) \leq V_\infty + \kappa e^{ - \alpha |x| } \quad \text{for all $x \in \R$}.
					\]
			\end{enumerate}
		Then there exist $\lambda_0=\lambda_{0}(V,f) > 0$, $C_0=C_0(V_\infty,p) > 0$  
		and $\wt{C}_0 = \wt{C}_0( V_\infty, p ) > 0$ such that 
		if the inequalities $\lambda \geq \lambda_0$ and 
			\begin{equation}\label{eq:1.7}
				- C_0 + \frac{\wt{C}_0 \kappa \alpha }{\alpha^2 - 4 V_\infty} < 0
			\end{equation}
		hold, then \eqref{Plam} admits an even positive solution $u_\lambda$. 
	\end{theorem}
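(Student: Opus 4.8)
The plan is to obtain an even positive solution by a symmetric mountain-pass argument for a truncated version of \eqref{Plam}, closing the compactness step with a two-bump interaction estimate in which \eqref{eq:1.7} is exactly the condition that makes the estimate work. The reason $N=1$ needs this extra ingredient is that, in contrast with $H^1_{\mathrm{rad}}(\RN)$ for $N\ge2$, the even subspace $H^1_{\mathrm{even}}(\R):=\{u\in H^1(\R):u(-x)=u(x)\}$ is \emph{not} compactly embedded in $L^q(\R)$, so an even Palais--Smale sequence may lose mass in a symmetric pair of bumps escaping to $\pm\infty$.

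First I would truncate $f$: pick small $s_0>0$ and take $\widetilde f\in C(\R)$ with $\widetilde f=f$ on $[0,s_0]$, $\widetilde f\equiv0$ on $(-\infty,0]$, $\widetilde f$ globally subcritical, satisfying a global Ambrosetti--Rabinowitz inequality, $\tfrac12\widetilde f(s)s-\widetilde F(s)\ge0$, and $\widetilde f(s)/s$ nondecreasing (compatible with (f6) and, near $0$, with (f4)). Let $I_\lambda$ be the resulting functional on $H^1(\R)$. It has the mountain-pass geometry on $H^1_{\mathrm{even}}(\R)$ (using $\widetilde f(s)=o(s)$ at $0$ and the global Ambrosetti--Rabinowitz inequality), and, by the principle of symmetric criticality, a nontrivial critical point of $I_\lambda|_{H^1_{\mathrm{even}}}$ is a solution of $-u''+V(x)u=\lambda\widetilde f(u)$. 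Let $c_\lambda$ be the mountain-pass level of $I_\lambda|_{H^1_{\mathrm{even}}}$ and let $m_\lambda^\infty>0$ be the least-energy (= mountain-pass) level of the autonomous problem $-u''+V_\infty u=\lambda\widetilde f(u)$; its positive ground state $\omega_\lambda$ satisfies $\lambda^{1/(p-2)}\omega_\lambda\to\omega_0$ in $H^1(\R)$ after recentring (by \cref{theorem:1.3}(b) applied to the constant potential $V_\infty$, or directly by ODE analysis), $\omega_0$ being the explicit positive even solution of \eqref{eq:1.3}; in particular $m_\lambda^\infty=\lambda^{-2/(p-2)}(c_\infty+o(1))$ and $\omega_0(x)\sim C_\ast e^{-\sqrt{V_\infty}\,|x|}$ as $|x|\to\infty$, with $c_\infty,C_\ast>0$ depending only on $V_\infty,p$. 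A standard profile decomposition then shows that a (bounded) Palais--Smale sequence for $I_\lambda|_{H^1_{\mathrm{even}}}$ at level $c_\lambda$ either converges strongly, giving a nontrivial even solution, or loses nonzero mass consisting of symmetric pairs of translated autonomous solutions, each pair carrying at least $2m_\lambda^\infty$; since $I_\lambda(w)\ge0$ for any solution $w$, the latter forces $c_\lambda\ge2m_\lambda^\infty$. Hence it suffices to prove $c_\lambda<2m_\lambda^\infty$.

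This strict inequality is the core of the proof. As competitor take the even function $w_{R,\lambda}(x):=\omega_\lambda(x-R)+\omega_\lambda(x+R)$, $R\gg1$, and estimate $\max_{t>0}I_\lambda(tw_{R,\lambda})$. Writing $I_\lambda=I_\lambda^\infty+\tfrac12\int_\R(V-V_\infty)|\cdot|^2$ with $I_\lambda^\infty$ the autonomous functional and expanding (the two bumps overlap exponentially little; by (f5), $\widetilde f(s)\approx s^{p-1}$ for small $s$, so cross terms are governed by $\lambda\int_\R\omega_\lambda(\cdot-R)^{p-1}\omega_\lambda(\cdot+R)$), one gets three contributions: the bulk term $2m_\lambda^\infty$; the bump--bump interaction of two positive focusing bumps, which is \emph{attractive} and equals $-C_0\,\lambda^{-2/(p-2)}e^{-2\sqrt{V_\infty}\,R}(1+o(1))$ with $C_0=C_0(V_\infty,p)>0$ and rate $e^{-2\sqrt{V_\infty}R}$ dictated by $\omega_0\sim C_\ast e^{-\sqrt{V_\infty}|x|}$; and the potential perturbation, bounded via (V6') by
\[
\frac{t^2}{2}\int_\R\kappa\,e^{-\alpha|x|}w_{R,\lambda}^2\ \le\ \widetilde C_0\,\frac{\kappa\,\alpha}{\alpha^2-4V_\infty}\,\lambda^{-2/(p-2)}e^{-2\sqrt{V_\infty}\,R}(1+o(1)),
\]
where the assumption $\alpha>2\sqrt{V_\infty}$ is exactly what makes $\int_\R e^{-\alpha|x|}e^{-2\sqrt{V_\infty}|x-R|}\,\rd x$ converge and produces the factor $(\alpha^2-4V_\infty)^{-1}=[(\alpha-2\sqrt{V_\infty})(\alpha+2\sqrt{V_\infty})]^{-1}$, with $\widetilde C_0=\widetilde C_0(V_\infty,p)>0$. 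Since the maximiser $t_\lambda\to1$ as $\lambda\to\infty$, adding up,
\[
c_\lambda\ \le\ \max_{t>0}I_\lambda(tw_{R,\lambda})\ \le\ 2m_\lambda^\infty+\lambda^{-2/(p-2)}e^{-2\sqrt{V_\infty}\,R}\Big(-C_0+\frac{\widetilde C_0\,\kappa\,\alpha}{\alpha^2-4V_\infty}\Big)(1+o(1)).
\]
Under \eqref{eq:1.7} the bracket is strictly negative, so fixing $R$ large and then $\lambda_0$ large gives $c_\lambda<2m_\lambda^\infty$.

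Finally, the solution $w\in H^1_{\mathrm{even}}(\R)$ produced by the compactness step solves $-u''+V(x)u=\lambda\widetilde f(u)$; testing with $w_-:=\max\{-w,0\}$ and using (V1) gives $w\ge0$, and the strong maximum principle gives $w>0$. Running the a priori estimates of \cref{theorem:1.3} for $v_\lambda:=\lambda^{1/(p-2)}w$ (bounded in $H^1(\R)\cap L^\infty(\R)$, profile governed by \eqref{eq:1.3}) yields $\|w\|_{L^\infty(\R)}=\lambda^{-1/(p-2)}\|v_\lambda\|_{L^\infty(\R)}\to0$, so after enlarging $\lambda_0$ we have $\|w\|_{L^\infty(\R)}\le s_0$ for $\lambda\ge\lambda_0$, whence $\widetilde f(w)=f(w)$ and $u_\lambda:=w$ solves \eqref{Plam}. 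I expect the main obstacle to be the two-bump estimate: one must extract the sharp exponential rate $e^{-2\sqrt{V_\infty}R}$ with the right constants simultaneously for the attractive bump--bump interaction and for the potential term, uniformly in the maximisation over $t$, so that these two competing exponentials have the same rate and \eqref{eq:1.7} emerges as the exact threshold; the symmetric profile-decomposition bookkeeping that rules out every intermediate loss-of-mass scenario is the second delicate point.
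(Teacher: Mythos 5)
Your proposal is correct and follows essentially the same route as the paper: truncate the nonlinearity so that it has a global Ambrosetti--Rabinowitz structure, work in the even subspace with symmetric criticality, reduce compactness to the strict inequality $c_{\lambda,E}<2c_{\lambda,\infty}$, and obtain that inequality from a two-bump competitor in which \eqref{eq:1.7} arises from the competition between the attractive bump--bump interaction and the potential perturbation, both decaying at the rate $e^{-2\sqrt{V_\infty}\,R}$ with the factor $(\alpha^2-4V_\infty)^{-1}$ coming from (V6'). The obstacle you flag at the end --- two-sided exponential decay of $\lambda^{1/(p-2)}w_\lambda$ with constants depending only on $V_\infty$ and $p$, uniformly in $\lambda$ --- is precisely what the paper's \cref{proposition:6.1} supplies, via \cref{theorem:1.3} and comparison with the Green function of $-\frac{\rd^2}{\rd x^2}+V_\infty$.
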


\begin{remark}
It is worth noting that the constants $C_0$ and $\wt{C}_0$ in \cref{theorem:1.6} 
depend only on $V_\infty$ and $p$, not on $V$ itself where $p$ appears in (f5). 
Hence, after fixing a constant $V_\infty$ and a function $f$ satisfying (f1), (f5) and (f6), 
we may verify \eqref{eq:1.7} provided either $\alpha \in (2\sqrt{V}_\infty, \infty  )$ is large or $\kappa > 0$ is small. 
Therefore, for given $V_\infty$ and $f$, and 
for any $\alpha \in (2\sqrt{V_\infty}, \infty)$ and $\kappa > 0$ satisfying \eqref{eq:1.7}, 
we may find $V(x)$ which enjoys (V1), (V2), (V5') and (V6'), and \cref{theorem:1.6} can be applied. 
\end{remark}

	Here we explain an outline and ideas of proofs. 
Since the nonlinear term $f$ may have the $H^1$-supercritical growth, 
following \cite{AW2, CW05, DMS}, 
we modify $f$ and write $g$ for the modification of $f$. 
Then we aim to show the existence of a family $(u_\lambda)_\lambda$ of positive solutions of the modified problem
	\[
		-\Delta u + V(x) u = \lambda g (u) \quad \text{in} \ \RN
	\]
satisfying $\| u_\lambda \|_{L^\infty(\RN)} \to 0$ as $\lambda \to \infty$. 
Since we do not suppose the local Ambrosetti--Rabinowitz condition, the boundedness of Palais--Smale sequences 
is an issue. For this issue, in \cite{AW2}, the monotonicity trick (see \cite{Je99}) was employed and 
some conditions on $\nabla V$ were necessary to exploit the Pohozaev identity. 
In this paper, we shall use Palais--Smale--Cerami sequences instead of Palais--Smale sequences 
and our approach is similar to \cite{AS} for the existence of positive solutions. 
See the comments after \eqref{eq:3.3} for the relation between \cite{AS} and our argument. 
After the boundedness of Palais--Smale--Cerami sequences is established, 
we show the existence of positive solution via the concentration compactness argument due to Lions \cite{Li84a,Li84b}. 
In this paper, we will use a version in \cite[Chapter 8]{Wi96}.

	In order to prove $\| u_\lambda \|_{L^\infty(\RN)} \to 0$ as $\lambda \to \infty$, 
in \cite{AW2,CW05,DMS}, we point out that 
the local Ambrosetti--Rabinowitz condition or the Pohozaev identity was also used. 
Therefore, to obtain $\| u_\lambda \|_{L^\infty(\RN)} \to 0$ without using these conditions, 
some ideas are necessary. Here we use a slightly different modification of $f$ to those in \cite{AW2,CW05,DMS} 
and require $g$ to satisfy the following estimate: each $\tau > 0$ there exists a $c_\tau > 0$ such that 
\begin{equation} \label{eq:1.8}
\frac{1}{2} g (s) s -G(s) \ge c_{\tau} s^p \quad \hbox{for all} \ s\in [\tau,\infty) \quad \text{where} \ G(s) := \int_0^s g(t) \, \rd t. 
\end{equation}
See \cref{lemma:2.1} for more precise information. 
We remark that \eqref{eq:1.8} is also useful to obtain the boundedness of Palais--Smale--Cerami sequences. 
To prove $\| u_\lambda \|_{L^\infty (\RN)} \to 0$ through \eqref{eq:1.8}, 
we focus on the superlevel set of $u_\lambda$ and 
first derive the decay estimate in $\lambda$ of $\| (u_\lambda - \tau)_+ \|_{L^{p}(\RN)}$ 
for each fixed $\tau > 0$. 
After that, we may get the decay estimate of $\| (u_\lambda - \tau)_+ \|_{H^1(\RN)}$ and 
$\| (u_\lambda - \tau)_+ \|_{L^\infty(\RN)}$, which yields $\| u_\lambda \|_{L^\infty (\RN)} \to 0$ 
due to the arbitrariness of $\tau>0$. These will be done in \cref{proposition:3.6}.

	Finally we emphasize that the case $N=1$ is different from the case $N \geq 2$. 
Indeed, according to \cite{GS,Sa}, \ef{Plam} may not have any positive solutions in some case. 
We will see the difference between $N =1$ and $N \geq 2$ from technical point of view in the end of \cref{section:5}. 
We also mention that a sufficient condition for the existence of even positive solutions 
becomes complicated and implicit. For instance, the existence of nontrivial solution was studied in \cite{ES09,EGS} 
and a key assumption in \cite{ES09,EGS} is \cite[($\mathrm{A}_2$)]{EGS}. 
In this paper, we will perform a refined interaction estimate when $N=1$ to prove a strict inequality 
which is similar to \cite[($\mathrm{A}_2$)]{EGS} (see \cref{proposition:6.2} below). 
In this procedure, the asymptotic behavior of positive solutions $u_{\lambda}$ of \ef{Plam}
as $\lambda \to \infty$, which is obtained in \cref{theorem:1.3}, plays an important role.

This paper is organized as follows. 
In \cref{section:2}, we introduce a modified nonlinearity which possesses several global properties.
\cref{section:3} is devoted to the study of the existence of a positive solution of \ef{Plam}
and the proof of \cref{theorem:1.1}.
In \cref{section:4}, we investigate the asymptotic behavior of positive solutions of \ef{Plam} as $\lambda \to \infty$
and give the proof of \cref{theorem:1.3}.
\cref{section:5} deals with $\mathcal{G}$-invariant case, 
and \cref{section:6} is dedicated to one-dimensional case.

%
%
%


\section{Modification of the nonlinear term}
\label{section:2}

In this section, we introduce a modified nonlinearity $g_{m_0}$ which satisfies several global properties.
First by (f3) and (f4), we can find $s_0>0$ and $c_0>0$ such that 
\begin{equation}\label{eq:2.1}
	F(s) \geq c_0 s^{p}, \quad \frac{1}{2} f(s) s - F(s) > 0  
	\quad \text{for all $s \in (0,4s_0]$}.
\end{equation}
We remark that \eqref{eq:2.1} implies 
\begin{equation}\label{eq:2.2}
	f(s) > 0 \quad \text{for all $s \in (0,4s_0]$}. 
\end{equation}
Next, we fix $\xi_1,\eta_1 \in C^\infty(\R)$ so that 
\begin{equation}\label{eq:2.3}
	\begin{aligned}
		\xi_1 & \equiv 1 & &\text{on} \ [0,3s_0], 
		& \xi_1 & \equiv 0 & &\text{on} \ [4s_0,\infty), 
		& \xi_1' & \leq 0 & &\text{on} \ [0,\infty),
		\\
		\eta_1 & \equiv  0 & &\text{on} \ [0,s_0],
		& \eta_1 & \equiv 1 & &\text{on} \ [2s_0,\infty), 
		& \eta_1' & \geq 0 & & \text{in} \ [0,\infty).
	\end{aligned}
\end{equation}
For $m > 0$, we define $g_m(s)$ by 
	\begin{equation}\label{eq:2.4}
		g_m(s) := \xi_1(s) f(s) + m \eta_1(s) s^{p-1} \quad \text{for $s \geq 0$}, \quad 
		g_m(s) := 0 \quad \text{for $s \leq 0$}
	\end{equation}
and set $G_m(s) := \int_0^s g_m(t) \, \rd t$. 
As mentioned in the introduction, (f5) implies (f3) and (f4),
and hence, \eqref{eq:2.1} and \eqref{eq:2.2} still hold under (f5) only.

\begin{lemma} \label{lemma:2.1}
Assume {\rm (f1)--(f4)}. Then there exists $m_0 > 0$ such that 
$g_{m_0}$ satisfies the following properties: 
\begin{enumerate}
\item[{\rm (i)}] 
For any $\tau \in (0,s_0)$, there exists a constant $c_\tau > 0$ such that 
\begin{alignat}{2} 
		&c_\tau s^p \leq \frac{1}{2}g_{m_0} (s) s - G_{m_0} (s) 
		&
		\quad &\text{for all $s \in [\tau,\infty)$}, \label{alignat:2.5}
		\\
		& 0 < \frac{1}{2} g_{m_0} (s) s - G_{m_0} (s)
		& &\text{for all $s \in (0,\infty)$}. \label{alignat:2.6}
\end{alignat}

\item[{\rm (ii)}]
There exists a constant $\wt{c}>0$ such that 
\begin{equation}\label{eq:2.7}
	\wt{c} s^p \leq G_{m_0} (s) \quad \text{for all $s \in [0,\infty)$}.
\end{equation}

\item[{\rm (iii)}]
Assume further {\rm (f5)}. Then by taking $s_0>0$ smaller if necessary, 
there exist $\mu \in (2,p)$ and $0<C_1 \leq C_2 < \infty$ such that
	\begin{alignat}{2}
		&0 < \mu G_{m_0 } (s) \leq g_{ m_0 } (s) s \quad & & \text{for all $s \in (0,\infty)$},
		\label{alignat:2.8}
		\\
		&C_1 s^{p-1} \leq g_{m_0} (s) \leq C_2 s^{p-1} \quad & & \text{for all $s \in (0,\infty)$}.
		\label{alignat:2.9}
	\end{alignat}

\item[{\rm (iv)}]
If {\rm (f6)} holds, then $s^{-1}g_{m_0}(s)$ is nondecreasing in $(0,\infty)$.
\end{enumerate}

\end{lemma}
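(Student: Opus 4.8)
The plan is to exploit that on $(0,3s_0]$ one has $\xi_1\equiv1$, so that $g_{m_0}(s)=f(s)+m_0\eta_1(s)s^{p-1}$ and $G_{m_0}(s)=F(s)+m_0\int_0^s\eta_1(t)t^{p-1}\,\rd t$ and \eqref{eq:2.1} applies verbatim there, while on $[3s_0,\infty)$ the genuinely superlinear term $m_0\eta_1(s)s^{p-1}$ — equal to $m_0 s^{p-1}$ on $[2s_0,\infty)$ — will dominate every other contribution once $m_0$ is chosen large. So I would fix $s_0$, $\xi_1$, $\eta_1$ first (shrinking $s_0$ beforehand if (f5) or (f6) is to be used; see below) and choose $m_0$ only at the very end, larger than the finitely many thresholds produced in the steps that follow. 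Note first that $g_{m_0}(s)>0$ for all $s>0$, since on $(0,3s_0]$ one has $g_{m_0}(s)\ge f(s)>0$ by \eqref{eq:2.2} and on $[3s_0,\infty)$ one has $g_{m_0}(s)\ge m_0 s^{p-1}>0$; hence $G_{m_0}$ is strictly increasing on $[0,\infty)$ and $G_{m_0}(s)>0$ for $s>0$, which already settles the strict positivity $0<\mu G_{m_0}(s)$ in \eqref{alignat:2.8}.

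\textbf{Parts (i) and (ii).} For \eqref{alignat:2.6} I would write $\tfrac12 g_{m_0}(s)s-G_{m_0}(s)=P(s)+m_0Q(s)$ with $P(s):=\tfrac12\xi_1(s)f(s)s-\int_0^s\xi_1(t)f(t)\,\rd t$ and $Q(s):=\tfrac12\eta_1(s)s^p-\int_0^s\eta_1(t)t^{p-1}\,\rd t$, then integrate by parts (legitimate since $\xi_1,\eta_1\in C^\infty$ and $F\in C^1$) to obtain $Q(s)=(\tfrac12-\tfrac1p)\eta_1(s)s^p+\tfrac1p\int_0^s\eta_1'(t)t^p\,\rd t\ge0$ and $P(s)=\xi_1(s)\bigl(\tfrac12 f(s)s-F(s)\bigr)+\int_0^sF(t)\xi_1'(t)\,\rd t$. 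By \eqref{eq:2.1} the first summand of $P$ is $\ge0$ on $(0,4s_0]$ and $=0$ on $[4s_0,\infty)$, while $\int_0^sF(t)\xi_1'(t)\,\rd t\le0$ (as $\xi_1'\le0$ and $F\ge0$ on $[0,4s_0]$), has modulus $\le F(4s_0)=:\Lambda$, and vanishes for $s\le3s_0$. Hence $\tfrac12 g_{m_0}(s)s-G_{m_0}(s)>0$ on $(0,3s_0]$ for every $m_0>0$, and since $Q(s)\ge(\tfrac12-\tfrac1p)(3s_0)^p=:q_\ast>0$ for $s\ge3s_0$, the choice $m_0>\Lambda/q_\ast$ gives $\tfrac12 g_{m_0}(s)s-G_{m_0}(s)\ge-\Lambda+m_0q_\ast>0$ on $[3s_0,\infty)$; this proves \eqref{alignat:2.6}. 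The lower bounds \eqref{alignat:2.5} and \eqref{eq:2.7} then follow from a compactness-plus-asymptotics argument: the functions $s\mapsto\bigl(\tfrac12 g_{m_0}(s)s-G_{m_0}(s)\bigr)s^{-p}$ on $[\tau,\infty)$ and $s\mapsto G_{m_0}(s)s^{-p}$ on $(0,\infty)$ are continuous and strictly positive (for the first, by \eqref{alignat:2.6}; for the second, note $G_{m_0}(s)\ge F(s)\ge c_0 s^p$ on $(0,3s_0]$ by \eqref{eq:2.1}) and tend to the strictly positive limits $(\tfrac12-\tfrac1p)m_0$, resp.\ $m_0/p$, as $s\to\infty$ (since $g_{m_0}(s)=m_0 s^{p-1}$ for $s\ge4s_0$), hence each has a strictly positive infimum, which furnishes $c_\tau$ and $\tilde c$.

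\textbf{Part (iii).} Under (f5) I would first shrink $s_0$ so that $(1-\e)s^{p-1}\le f(s)\le(1+\e)s^{p-1}$ on $(0,4s_0]$ for a small $\e\in(0,1)$ to be fixed. Checking $g_{m_0}(s)=\xi_1(s)f(s)+m_0\eta_1(s)s^{p-1}$ on each of $(0,s_0]$, $[s_0,2s_0]$, $[2s_0,3s_0]$, $[3s_0,4s_0]$, $[4s_0,\infty)$, where the cut-offs are explicit, yields \eqref{alignat:2.9} with $C_1:=\min\{1-\e,m_0\}$ and $C_2:=1+\e+m_0$. For \eqref{alignat:2.8} I would fix $\mu\in\bigl(2,\,p\tfrac{1-\e}{1+\e}\bigr)$, a nonempty subinterval of $(2,p)$ once $\e$ is small enough (here $p>2$ is essential), and verify $g_{m_0}(s)s\ge\mu G_{m_0}(s)$ piecewise. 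On $(0,3s_0]$ the $f$-part contributes $f(s)s-\mu F(s)\ge(1-\e)s^p-\tfrac{\mu(1+\e)}{p}s^p\ge0$ and the $m_0$-part contributes $m_0\bigl(\eta_1(s)s^p-\mu\int_0^s\eta_1(t)t^{p-1}\,\rd t\bigr)\ge0$ by the same integration by parts as for $Q$. On $[s_0,4s_0]$ I would write $g_{m_0}(s)s-\mu G_{m_0}(s)=h_1(s)+m_0h_2(s)$ with $h_1$ continuous (hence bounded there) and, by integration by parts, $h_2(s)=(1-\tfrac\mu p)\eta_1(s)s^p+\tfrac\mu p\int_{s_0}^s\eta_1'(t)t^p\,\rd t\ge0$; on the subinterval where $\eta_1\ge\delta_0$ — with $\delta_0>0$ so small that this subinterval lies in $[s_0,2s_0]$, on which $\xi_1\equiv1$ — one has $h_2\ge(1-\tfrac\mu p)\delta_0 s_0^p>0$, whereas on the complementary set $\xi_1\equiv1$ forces $h_1(s)=f(s)s-\mu F(s)>0$; so $h_1+m_0h_2>0$ throughout $[s_0,4s_0]$ once $m_0$ is large enough. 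Finally on $[4s_0,\infty)$ the quantity $g_{m_0}(s)s-\mu G_{m_0}(s)$ is strictly increasing in $s$ and, by the previous step, positive at $s=4s_0$. This, together with $G_{m_0}>0$ on $(0,\infty)$, gives \eqref{alignat:2.8}.

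\textbf{Part (iv), and the main obstacle.} If (f6) holds I would shrink $s_0$ so that $4s_0\le s_2$, making $f(s)/s$ nondecreasing on $(0,4s_0]$. Since $s^{-1}g_{m_0}$ is continuous, it suffices to prove it nondecreasing on each of $(0,3s_0]$, $[3s_0,4s_0]$, $[4s_0,\infty)$. On $(0,3s_0]$ it equals $f(s)/s+m_0\eta_1(s)s^{p-2}$, a sum of nonnegative nondecreasing functions; on $[4s_0,\infty)$ it equals $m_0 s^{p-2}$; and on $[3s_0,4s_0]$ it equals $\xi_1(s)\tfrac{f(s)}{s}+m_0 s^{p-2}$, where for $3s_0\le s<s'\le4s_0$ one has $\xi_1(s)\tfrac{f(s)}{s}-\xi_1(s')\tfrac{f(s')}{s'}\le\bigl(\xi_1(s)-\xi_1(s')\bigr)\tfrac{f(s')}{s'}\le\|\xi_1'\|_{L^\infty}\tfrac{f(4s_0)}{4s_0}(s'-s)$, while $m_0\bigl((s')^{p-2}-s^{p-2}\bigr)\ge m_0(p-2)\bigl(\min_{t\in[3s_0,4s_0]}t^{p-3}\bigr)(s'-s)$, so the increment of $\xi_1(s)\tfrac{f(s)}{s}+m_0 s^{p-2}$ is $\ge0$ for $m_0$ large. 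Choosing $m_0$ above all the thresholds collected above completes the proof. The hard part throughout is the transition layer $[3s_0,4s_0]$, where the decreasing cut-off $\xi_1$ pushes $\tfrac12 g_{m_0}(s)s-G_{m_0}(s)$, the Ambrosetti--Rabinowitz quotient, and $s^{-1}g_{m_0}(s)$ in the wrong direction; this is precisely what forces $m_0$ to be large, and the resolution in each case is that the harmful $\xi_1 f$ contribution is bounded and depends only on $s_0$ and $f$, whereas the helpful term $m_0\eta_1(s)s^{p-1}=m_0 s^{p-1}$ on $[2s_0,\infty)$ scales linearly in $m_0$.
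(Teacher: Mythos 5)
Your proof is correct and follows essentially the same route as the paper: the same decomposition of $g_{m_0}$ into the truncated $f$-part and the $m_0\,\eta_1(s)s^{p-1}$-part, with the transition layer $[3s_0,4s_0]$ (where the decreasing cut-off works against you) controlled by choosing $m_0$ large against a bounded bad contribution. The only harmless variations are that you use exact integration-by-parts identities for $P$ and $Q$ where the paper uses the cruder bounds $M_0:=\sup_{s\ge 0}\int_0^s\xi_1 f\,\rd t<\infty$ and $p\wt{G}(s)\le \wt{g}(s)s$, and that you obtain $c_\tau$ and $\wt{c}$ from the positive infimum of a continuous positive function with a positive limit at infinity rather than from explicit constants.
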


\begin{proof}
We prove (i). First we claim that there exists $m_0>0$ such that 
\begin{equation}\label{eq:2.10}
	m \geq m_0, \ s \geq 3s_0 \quad \Rightarrow \quad \frac{1}{2} g_m (s) s - G_m (s) \geq s^{p}.
\end{equation}
Indeed when $s \ge 3s_0$, one finds from \ef{eq:2.3} that
	\begin{equation}\label{eq:2.11}
		\sup_{s \ge 0}  \int_0^s \xi_1(t) f(t) \, \rd t  =:M_0 < \infty.
	\end{equation}
Moreover writing $\wt{g} (s) := \eta_1(s) s^{p-1}$ and $\wt{G}(s) := \int_0^s \wt{g}(t) \, \rd t$, 
by \ef{eq:2.3} and $p>2$, we also have
	\begin{equation}\label{eq:2.12}
	0 \leq 2 \wt{G} (s) \leq  p \wt{G}(s) \leq p \eta_1(s) \int_0^s t^{p-1} \, \rd t = \eta_1(s) s^{p} = \wt{g}(s) s
	\quad \text{for any $s \in [0,\infty)$}.
\end{equation}
Thus, from \eqref{eq:2.2}, \eqref{eq:2.3} and \eqref{eq:2.12}, it follows that for $s \ge 3s_0$, 
\[
	\begin{aligned}
		\frac{1}{2}g_m (s) s - G_m (s) 
		&= 
		\left\{ \frac{1}{2} \xi_1(s) f(s) s - \int_0^s \xi_1(t) f(t) \, \rd t \right\} 
		+ m \left\{ \frac{1}{2} \wt{g} (s) s - \wt{G}(s) \right\}
		\\
		& \geq 
		- M_0 + m \left( \frac{1}{2} - \frac{1}{p} \right) \wt{g} (s) s 
		\\
		& = 
		- M_0 + m \left( \frac{1}{2} - \frac{1}{p} \right) s^p,
	\end{aligned}
\]
from which we deduce \ef{eq:2.10}.

	Next we show \ef{alignat:2.5} and \ef{alignat:2.6}. 
We choose $\tau \in (0,s_0)$ arbitrarily and suppose that $\tau \le s \le 3s_0$. 
Then from $g_m (s) = f(s) + m \wt{g}(s)$, \eqref{eq:2.1} and \eqref{eq:2.12}, it follows that 
\[
	\frac{1}{2} g_m (s) s - G_m (s) 
	= 
	\frac{1}{2} f(s) s + \frac{m}{2} \wt{g} (s) s - F(s) - m \wt{G}(s) 
	\geq \frac{1}{2} f(s) s - F(s) > 0.
\]
Hence, we can find a constant $c_\tau > 0$ such that
\begin{equation*}
	c_\tau s^p \leq \frac{1}{2} g_m (s) s - G_m (s) 
	\quad \text{for any $m > 0$ and $s \in [\tau,3s_0]$}.
\end{equation*}
Then \ef{eq:2.10} and \ef{eq:2.12} yield \eqref{alignat:2.5}. 
Moreover since $g_{m_0} \equiv f$ on $[0,\tau]$, 
\ef{alignat:2.6} follows from (f4).

Next, we prove (ii). By \eqref{eq:2.1} and \eqref{eq:2.3},  
we have $c_0s^p \leq F(s) \leq G_{m_0} (s)$ for all $s \in [0,3s_0]$. 
On the other hand if $3s_0 < s$, choosing $\wt{c} \in (0,c_0]$ sufficiently small, 
we see from $\eta_1(s) = 1$ and $\xi_1(s) f(s) \geq 0$ that 
\[
	\frac{\rd}{\rd s} \left\{ G_{m_0} (s) - \wt{c} s^p \right\} \geq m_0 s^{p-1} - p \wt{c} s^{p-1} > 0, 
	\quad G_{m_0} (3s_0) - \wt{c} (3s_0)^p > 0.
\]
Thus, it follows that 
$G_{m_0}(s) \geq \wt{c} s^p$ for all $s \in [3s_0,\infty)$ and hence (ii) holds.

We show (iii). By (f5), one finds that
\[
	\lim_{s \to 0^+ } \frac{f(s)}{s^{p-1}} = 1, \quad 
	\lim_{s \to 0^+}  \frac{F(s)}{s^p} = \lim_{s \to 0^+} \frac{1}{s^p} \int_0^s \frac{f(t)}{t^{p-1}} t^{p-1} \, \rd t 
	= \frac{1}{p}.
\]
For $\mu \in (2,p)$, by taking $s_0$ smaller if necessary, we have 
\[
	0<\mu \frac{F(s)}{s^p} \leq \frac{f(s)}{s^{p-1}} \quad \text{for each $s \in (0,4s_0]$},
\]
and hence 
\[
	0 < \mu F(s) \leq f(s) s \quad \text{for any $s \in (0,4s_0]$}.
\]
Since $\mu \in (2,p)$, by \eqref{eq:2.11}, \eqref{eq:2.12} and replacing $m_0$ by larger one if necessary, 
we see for $s \in (0,3s_0]$
	\[
		g_{m_0}(s) s - \mu G_{m_0} (s) = f(s) s - \mu F(s) + m_0 \left( \wt{g}(s) s - \mu \wt{G}(s) \right) \geq 0
	\]
and for $s \in (3s_0,\infty)$ 
	\[
		g_{m_0} (s) s - \mu G_{m_0} (s) \geq - \mu M_0 + m_0 \left( 1 - \frac{\mu}{p} \right) \wt{g} (s) s \geq 0.
	\]
Moreover by \eqref{eq:2.3}, (f5) and the definition of $g_{m_0}(s)$, 
we can verify \eqref{alignat:2.9} easily.

	Finally, we prove (iv). 
Without loss of generality, we may assume $4s_0 \leq s_2$. 
By (f6) and \eqref{eq:2.3}, it suffices to show that 
$s^{-1} g_{m_0}(s)$ is nondecreasing in $[3s_0,4s_0]$. 
Let $3s_0 \leq t_1 < t_2 \leq 4s_0$ and set 
	\[
		M_1 := \max_{ 3s_0 \leq t \leq 4s_0 } \left| \xi'(t) \frac{f(t)}{t} \right| < \infty.
	\]
Then \eqref{eq:2.3} and (f6) imply 
	\[
		\begin{aligned}
			\frac{g_{m_0} (t_2) }{t_2} - \frac{g_{m_0}(t_1)}{t_1}
			&= \xi(t_2) \left( \frac{f(t_2)}{t_2} - \frac{f(t_1)}{t_1} \right) 
			+ \left( \xi(t_2) - \xi(t_1) \right) \frac{f(t_1)}{t_1} + m_0 \left( t_2^{p-2} - t_1^{p-2} \right) 
			\\
			&\geq \int_{t_1}^{t_2} \xi'(\tau) \frac{f(t_1)}{t_1} + (p-2) m_0 \tau^{p-3} \, \rd \tau 
			 \\
			 &\geq \int_{t_1}^{t_2} (p-2) m_0 \tau^{p-3} - M_1 \, \rd \tau \geq 0
		\end{aligned}
	\]
provided $m_0>0$ is large. Hence, (iv) holds. 
\end{proof}

	\begin{remark}\label{remark:2.2}
		From the proof of \cref{lemma:2.1} (i) under (f1)--(f4), for $\mu \in (2,p)$, 
		it is also possible to verify $  0 < \mu G_{m_0} (s) \leq g_{m_0} (s) s$ for all $s \in [4s_0,\infty)$ 
		if $m_0$ is sufficiently large. 
	\end{remark}

\section{Existence of a positive solution of \ef{Plam}: proof of \cref{theorem:1.1}}
\label{section:3}

	In this section, we shall prove \cref{theorem:1.1}. 
Here we mainly treat case (i), namely, we assume (V2) and (V3) 
since case (ii) is simpler than case (i).

\subsection{Proof of \cref{theorem:1.1} under (i)}
\label{section:3.1}

	Through out this subsection, we always assume (V1) and (V2). 
Hereafter, let $m_0 > 0$ be a constant in \cref{lemma:2.1} 
and consider the following auxiliary problem:
	\begin{equation}\label{eq:3.1}
		-\Delta u + V(x) u = \lambda g_{m_0} (u) \quad \text{in} \ \RN.
	\end{equation}
If a solution $u$ of \eqref{eq:3.1} satisfies $\| u \|_{L^\infty(\RN)} \leq s_0$, 
then by the definition of $g_{m_0}$ in \ef{eq:2.4}, 
$u$ is also a solution of \eqref{Plam}. 
Therefore, we aim to find such a solution of \ef{eq:3.1}.

	To find solutions of \eqref{eq:3.1}, we define $I_\lambda(u)$ by 
	\begin{equation}\label{eq:3.2}
		I_\lambda(u) := \frac{1}{2} \int_{\RN} |\nabla u|^2 + V(x) u^2 \,\rd x - \lambda \intN G_{m_0} (u) \, \rd x.
	\end{equation}
From \eqref{eq:2.4} and (f2), it is clear that $I_\lambda \in C^1(H^1(\RN), \R)$ and 
critical points of $I_\lambda$ are solutions of \eqref{eq:3.1}.

	For later use, we also define the scaled functional $J_\lambda(u)$ by 
	\[
		J_\lambda (v) := \frac{1}{2} \int_{\RN} |\nabla v|^2 + V(x) v^2 \,\rd x  
		- \lambda^{p/(p-2)} \intN G_{m_0} \left( \lambda^{- 1/(p-2) } v \right) \,\rd x \in C^1 \big( H^1(\RN), \R \big). 
	\]
Then one finds that 
	\begin{equation}\label{eq:3.3}
		J_\lambda \left( v \right) = \lambda^{2/(p-2)} I_\lambda \left( \lambda^{ -1/(p-2) } v \right).
	\end{equation}
Even though it is possible to apply the argument of \cite[Theorem 1.1]{AS} due to \cref{remark:2.2}, 
the case where $V$ satisfies (V1)--(V3) is not explicitly treated in \cite{AS} 
(notice that \cite[Class 3]{AS} is slightly different from our case). 
Therefore, for the sake of clarity and readers' convenience 
we include a proof of the existence of nontrivial critical point of $I_\lambda$. 

We first show that $I_\lambda$ has a mountain pass geometry:

\begin{proposition}\label{proposition:3.1}
	Suppose {\rm (V1)--\rm (V2)} and {\rm (f1)--(f4)} hold. 
	Then for every $\lambda > 0$, there exist $\delta_\lambda >0$ and $\varphi_\lambda \in C^\infty_c(\RN)$ such that 
			\[
				\inf_{\| u \|_{H^1(\RN)} = \delta_\lambda } I_\lambda (u) > 0, 
				\quad \delta_\lambda < \| \varphi_\lambda \|_{H^1(\RN)}, \quad I_\lambda (\varphi_\lambda) < 0.
			\]
\end{proposition}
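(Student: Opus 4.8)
The plan is to establish the two pieces of the mountain-pass geometry separately, using the estimates on $g_{m_0}$ from \cref{lemma:2.1}. For the local structure near the origin, the key is the upper bound in \eqref{alignat:2.9}-type control; however, since (iii) of \cref{lemma:2.1} requires (f5), under (f1)--(f4) only I would instead argue directly from (f2) and the definition \eqref{eq:2.4}. Specifically, (f2) gives that for any $\eps > 0$ there is $C_\eps > 0$ with $f(s) \le \eps s + C_\eps s^{p-1}$ for $s \ge 0$ (using that $f$ is bounded on $[0,4s_0]$ and $f$ is supported where $\xi_1 \ne 0$), and together with the term $m_0 \eta_1(s) s^{p-1}$ this yields $g_{m_0}(s) \le \eps s + C_\eps' s^{p-1}$ for all $s \ge 0$, hence $G_{m_0}(s) \le \tfrac{\eps}{2} s^2 + \tfrac{C_\eps'}{p} s^p$. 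Then
\[
	I_\lambda(u) \ge \frac12 \min\{1,V_0\} \| u \|_{H^1(\RN)}^2 - \frac{\lambda \eps}{2} \| u \|_{L^2(\RN)}^2 - \frac{\lambda C_\eps'}{p} \| u \|_{L^p(\RN)}^p.
\]
Choosing $\eps$ small (depending on $\lambda$) to absorb the quadratic error and then applying the Sobolev embedding $H^1(\RN) \hookrightarrow L^p(\RN)$ (valid since $p \in (2,2^\ast)$), I get $I_\lambda(u) \ge c_1 \| u \|_{H^1}^2 - c_2 \| u \|_{H^1}^p$ with $c_1, c_2 > 0$; since $p > 2$, this is strictly positive on a small sphere $\| u \|_{H^1(\RN)} = \delta_\lambda$.

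For the second piece, I would fix any nonnegative $\varphi \in C_c^\infty(\RN)$ with $\varphi \not\equiv 0$ and consider $\varphi_\lambda := t \varphi$ for $t$ large. Here the lower bound \eqref{eq:2.7}, $G_{m_0}(s) \ge \wt c\, s^p$ for all $s \ge 0$, does the work: it gives
\[
	I_\lambda(t\varphi) \le \frac{t^2}{2} \int_{\RN} |\nabla \varphi|^2 + V(x) \varphi^2 \, \rd x - \lambda \wt c\, t^p \int_{\RN} \varphi^p \, \rd x.
\]
Since $p > 2$ and $\int \varphi^p > 0$, the right-hand side tends to $-\infty$ as $t \to \infty$, so for $t$ large enough (depending on $\lambda$) we have both $I_\lambda(t\varphi) < 0$ and $\| t\varphi \|_{H^1(\RN)} > \delta_\lambda$. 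Setting $\varphi_\lambda := t\varphi$ finishes the argument. One should note the integral $\int_{\RN} |\nabla\varphi|^2 + V(x)\varphi^2$ is finite because $\varphi$ has compact support and $V$ is continuous (hence locally bounded); this is where (V1) and continuity of $V$ enter, and (V2) is not strictly needed for this proposition beyond ensuring the functional setting is well-posed.

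I do not expect a serious obstacle here; the only mild subtlety is that one cannot invoke part (iii) of \cref{lemma:2.1} (which needs (f5)), so the near-origin superquadratic control must be extracted from (f2) and the explicit form \eqref{eq:2.4} as above — but this is routine. Everything else is a standard mountain-pass-geometry verification, with the constants $\delta_\lambda$ and the height of $\varphi_\lambda$ allowed to depend on $\lambda$, exactly as stated.
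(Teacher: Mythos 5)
Your proposal is correct and follows essentially the same route as the paper: the paper likewise bounds $g_{m_0}(s)\le \tfrac{V_0}{2\lambda}|s|+C_\lambda|s|^{p-1}$ via (f2) and the definition \eqref{eq:2.4} (absorbing the quadratic error into the $V_0$ term and using the Sobolev embedding to get $c_1\|u\|_{H^1}^2-c_2\|u\|_{H^1}^p$), and it uses \eqref{eq:2.7} on a fixed compactly supported bump scaled by a large $t$ for the negative-energy endpoint. No gaps.
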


\begin{proof}
By (f2) and the definition of $g_{m_0}$, we may find $C_\lambda > 0$ so that 
	\[
		|g_{m_0}(s)| \leq \frac{V_0}{2\lambda} |s| + C_\lambda |s|^{p-1} 
		\quad \text{for each $s \in \R$}. 
	\]
Hence, it holds that 
	\[
		I_\lambda (u) \geq \int_{\RN} \frac{1}{4} | \nabla u|^2 + \frac{V(x)}{4} u^2 \, \rd x 
		- \frac{\lambda C_\lambda}{p} \| u \|_{L^p(\RN)}^p 
		\geq \frac{1}{4} \min \left\{ 1 , V_0 \right\} \| u \|_{H^1(\RN)}^2 - \wt{C}_\lambda \| u \|_{H^1(\RN)}^p
	\]
and the existence of $\delta_\lambda$ is easily deduced. 

Next, let $\varphi \in C^\infty_c(\RN)$ satisfy 
$\varphi \equiv 1$ on $B_1(0)$, $ 0 \leq \varphi \leq 1$ in $\RN$ and 
$\supp \varphi \subset B_2(0)$. By \eqref{eq:2.7}, for each $t \geq 0$, one has
	\[
		I_\lambda (t \varphi) \leq \frac{t^2}{2} \int_{\RN} |\nabla \varphi|^2 + V(x) \varphi^2 \, \rd x 
		- \wt{c} \lambda t^{p} \| \varphi \|_{L^{p}(\RN)}^{p}.
	\]
Thus, for sufficiently large $t$, we obtain 
$\delta_\lambda < \| t \varphi \|_{H^1(\RN)}$ and 
$I_\lambda (t \varphi) < 0$. 
\end{proof}

According to \eqref{eq:3.3} and \cref{proposition:3.1}, $J_\lambda$ also has a mountain pass geometry. 
For $I_\lambda$ and $J_\lambda$, we define the mountain pass values by 
	\begin{equation}\label{eq:3.4}
		\begin{aligned}
			c_\lambda &:= \inf_{ \gamma \in \Gamma_{I_\lambda}} \max_{0 \leq t \leq 1} I_\lambda (\gamma(t)) > 0, 
			& \Gamma_{I_\lambda} &:= \Set{ \gamma \in C \big( [0,1] , H^1(\RN) \big) | \gamma (0) = 0, \ I_\lambda (\gamma (1)) < 0 },
			\\
			d_\lambda &:= \inf_{ \gamma \in \Gamma_{J_\lambda}} \max_{0 \leq t \leq 1} J_\lambda (\gamma(t)), 
			& \Gamma_{J_\lambda} &:= \Set{ \gamma \in C \big( [0,1] , H^1(\RN) \big) | \gamma (0) = 0, \ J_\lambda (\gamma (1)) < 0 }.
		\end{aligned}
	\end{equation}
From \eqref{eq:3.3}, it follows that for each $\lambda > 0$, 
	\begin{equation}\label{eq:3.5}
		d_\lambda = \lambda^{2/(p-2)} c_\lambda. 
	\end{equation}
It is well-known that at the mountain pass level, there exists a Palais--Smale--Cerami sequence $(u_{\lambda,n})_n$ 
(see, for example, \cite{Ce78,Ek90,Ra12,St11}), that is, 
	\begin{equation}\label{eq:3.6}
		I_{\lambda} (u_{\lambda,n}) \to c_\lambda, \quad \left( 1 + \| u_{\lambda,n} \|_{H^1(\RN)} \right) 
		\left\| I_\lambda'(u_{\lambda,n}) \right\|_{(H^1(\RN))^\ast} \to 0.
	\end{equation}

	\begin{proposition}\label{proposition:3.2}
		Let {\rm (V1)--(V2)} and {\rm (f1)--(f4)} hold. 
		Then for each $\lambda > 0$, 
		the Palais--Smale--Cerami sequence $(u_{\lambda,n})_n$ in \eqref{eq:3.6} is bounded in $H^1(\RN)$. 
	\end{proposition}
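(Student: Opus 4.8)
The plan is to exploit the Cerami condition to control $\|u_{\lambda,n}\|_{H^1(\RN)}$ via the functional estimate \eqref{alignat:2.5}. First I would record the two consequences of \eqref{eq:3.6}: on one hand $I_\lambda(u_{\lambda,n}) \to c_\lambda$, so $I_\lambda(u_{\lambda,n})$ is bounded; on the other hand, since $(1+\|u_{\lambda,n}\|_{H^1(\RN)})\|I_\lambda'(u_{\lambda,n})\|_{(H^1(\RN))^\ast}\to 0$, testing $I_\lambda'(u_{\lambda,n})$ against $u_{\lambda,n}$ itself yields $I_\lambda'(u_{\lambda,n})[u_{\lambda,n}] \to 0$ (not merely bounded — this is the crucial gain of the Cerami sequence over a plain Palais--Smale sequence). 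Consequently the combination $I_\lambda(u_{\lambda,n}) - \tfrac12 I_\lambda'(u_{\lambda,n})[u_{\lambda,n}]$ stays bounded, say by a constant $C$ independent of $n$, and a direct computation gives
\[
	C \geq I_\lambda(u_{\lambda,n}) - \tfrac12 I_\lambda'(u_{\lambda,n})[u_{\lambda,n}]
	= \lambda \intN \left( \tfrac12 g_{m_0}(u_{\lambda,n}) u_{\lambda,n} - G_{m_0}(u_{\lambda,n}) \right) \rd x,
\]
where the quadratic part $\tfrac12(\|\nabla u_{\lambda,n}\|_{L^2}^2 + \int V u_{\lambda,n}^2)$ cancels exactly. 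Note $g_{m_0}(s)=0$ for $s\leq 0$, so the integrand vanishes where $u_{\lambda,n}\leq 0$ and \eqref{alignat:2.6} guarantees nonnegativity everywhere; hence the right-hand side is a genuine (nonnegative) bound.

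Next I would split the domain of integration at a fixed threshold $\tau\in(0,s_0)$. On $\{u_{\lambda,n}\geq\tau\}$, estimate \eqref{alignat:2.5} gives $\tfrac12 g_{m_0}(u_{\lambda,n})u_{\lambda,n}-G_{m_0}(u_{\lambda,n})\geq c_\tau u_{\lambda,n}^p$, so combining with the displayed bound,
\[
	\int_{\{u_{\lambda,n}\geq\tau\}} u_{\lambda,n}^p \, \rd x \leq \frac{C}{\lambda c_\tau}.
\]
On the complementary set $\{u_{\lambda,n}\leq\tau\}$ one has $|u_{\lambda,n}|\leq\tau$ pointwise; together with the $L^p$ bound just obtained this shows $(u_{\lambda,n})_n$ is bounded in $L^p(\RN)$. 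From here I would return to $I_\lambda'(u_{\lambda,n})[u_{\lambda,n}]\to 0$, written out as
\[
	\int_{\RN} |\nabla u_{\lambda,n}|^2 + V(x) u_{\lambda,n}^2 \, \rd x = \lambda \intN g_{m_0}(u_{\lambda,n}) u_{\lambda,n} \, \rd x + o(1),
\]
and bound the right-hand side by the growth estimate on $g_{m_0}$ (namely $|g_{m_0}(s)s|\leq \tfrac{V_0}{2\lambda}s^2 + C_\lambda|s|^p$, as already used in the proof of \cref{proposition:3.1}, or the bound \eqref{alignat:2.9} under (f5)). The $\tfrac{V_0}{2}\int u_{\lambda,n}^2$ term is absorbed into the left-hand side using (V1), while the $\lambda C_\lambda\|u_{\lambda,n}\|_{L^p}^p$ term is already controlled. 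Using $\int |\nabla u_{\lambda,n}|^2 + V(x)u_{\lambda,n}^2 \geq \min\{1,V_0\}\|u_{\lambda,n}\|_{H^1(\RN)}^2$, this yields a uniform bound on $\|u_{\lambda,n}\|_{H^1(\RN)}$.

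The main obstacle is conceptual rather than computational: without the local Ambrosetti--Rabinowitz condition, the quantity $\tfrac12 g_{m_0}(s)s - G_{m_0}(s)$ need not dominate $g_{m_0}(s)s$ or $G_{m_0}(s)$ near $s=0$, so one cannot directly conclude $L^p$ boundedness from the Cerami bound alone — it is precisely the splitting at a fixed level $\tau$, combined with the pointwise bound $|u_{\lambda,n}|\leq\tau$ on the low set, that circumvents this. The fact that we work with the Cerami (Palais--Smale--Cerami) sequence rather than a plain Palais--Smale sequence is what makes $I_\lambda'(u_{\lambda,n})[u_{\lambda,n}]\to 0$ available; for a plain Palais--Smale sequence this term would only be $O(\|u_{\lambda,n}\|_{H^1(\RN)})$, which is useless before boundedness is known. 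One should also be slightly careful that all constants ($C$, $c_\tau$, $C_\lambda$) may depend on $\lambda$, but since $\lambda$ is fixed throughout this proposition that is harmless.
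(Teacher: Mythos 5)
Your overall strategy is the same as the paper's: use the Cerami property to get $I_\lambda'(u_{\lambda,n})[u_{\lambda,n}]\to 0$, deduce that $\lambda\intN \tfrac12 g_{m_0}(u_{\lambda,n})u_{\lambda,n}-G_{m_0}(u_{\lambda,n})\,\rd x$ is bounded, and then use \eqref{alignat:2.5} to control $\|u_{\lambda,n}\|_{L^p([u_{\lambda,n}\geq\tau])}$. Up to that point your argument is correct. However, there is a genuine gap in the next step. You claim that on $\{u_{\lambda,n}\leq\tau\}$ the pointwise bound $|u_{\lambda,n}|\leq\tau$ ``together with the $L^p$ bound just obtained shows $(u_{\lambda,n})_n$ is bounded in $L^p(\RN)$.'' This does not follow: the set $\{0<u_{\lambda,n}<\tau\}$ may have arbitrarily large (indeed infinite, and certainly not uniformly bounded in $n$) Lebesgue measure, so a pointwise bound by $\tau$ there gives no control of $\int_{\{0<u_{\lambda,n}<\tau\}}|u_{\lambda,n}|^p\,\rd x$. (As a side remark, $u_{\lambda,n}\leq\tau$ also does not imply $|u_{\lambda,n}|\leq\tau$, though the negative part is harmless since $g_{m_0}\equiv 0$ on $(-\infty,0]$.) Your final step then invokes the global growth bound $|g_{m_0}(s)s|\leq\tfrac{V_0}{2\lambda}s^2+C_\lambda|s|^p$ and says the term $\lambda C_\lambda\|u_{\lambda,n}\|_{L^p(\RN)}^p$ ``is already controlled'' --- but it is only the superlevel-set norm $\|u_{\lambda,n}\|_{L^p([u_{\lambda,n}\geq\tau])}$ that has been controlled, not the full $L^p(\RN)$ norm, so the argument as written does not close.

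The fix is exactly the refinement the paper uses: by (f2) and the construction of $g_{m_0}$ (which agrees with $f$ on $[0,s_0]$), one can choose $\tau\in(0,s_0)$ and $C_{\lambda,\tau}>0$ so that
\[
|g_{m_0}(s)|\leq \frac{V_0}{2\lambda}|s|+C_{\lambda,\tau}\,\chi_{[\tau,\infty)}(s)\,s_+^{p-1}\quad\text{for all }s\in\R,
\]
i.e.\ the superlinear part carries the characteristic function of $[\tau,\infty)$. Then the right-hand side of your identity $\intN|\nabla u_{\lambda,n}|^2+V u_{\lambda,n}^2\,\rd x=\lambda\intN g_{m_0}(u_{\lambda,n})u_{\lambda,n}\,\rd x+o(1)$ is bounded by $\tfrac{V_0}{2}\intN u_{\lambda,n}^2\,\rd x+\lambda C_{\lambda,\tau}\|u_{\lambda,n}\|^p_{L^p([u_{\lambda,n}\geq\tau])}+o(1)$; the first term is absorbed into $\intN V u_{\lambda,n}^2\,\rd x$ via (V1), and the second is precisely the quantity you already bounded. (The paper instead runs the same truncated bound through $G_{m_0}$ and the relation $I_\lambda(u_{\lambda,n})=c_\lambda+o(1)$, which is an equivalent route.) With this correction your proof is complete and essentially identical to the paper's.
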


\begin{proof}
We first claim that 
	\begin{equation}\label{eq:3.7}
		\text{for every $\tau > 0$, $ \left( \| u_{\lambda,n} \|_{L^p( [ u_{\lambda,n} \geq \tau ] )}  \right)_n$ is bounded},
	\end{equation}
where $[ u \geq c ] := \Set{ x \in \RN | u(x) \geq c }$. 
Let $\tau > 0$ be given. 
By \eqref{eq:3.6}, one has 
	\[
		c_\lambda + o(1) = I_{\lambda} (u_{\lambda,n}) - \frac{1}{2} I_{\lambda}'(u_{\lambda,n}) u_{\lambda,n} 
		= \lambda \int_{\RN} \frac{1}{2} g_{m_0} (u_{\lambda,n}) u_{\lambda,n} - G_{m_0} (u_{\lambda,n}) \, \rd x. 
	\]
Recalling \eqref{alignat:2.5}, \eqref{alignat:2.6} and noting $g_{m_0} \equiv 0$ in $(-\infty,0]$, we obtain 
	\[
		\lambda c_\tau \int_{ \left[ u_{\lambda,n} \geq \tau \right] } u_{n,\lambda}^{p} \, \rd x 
		\leq 
		\lambda \int_{\RN} \frac{1}{2} g_{m_0} (u_{\lambda,n}) u_{\lambda,n} - G_{m_0} (u_{\lambda,n}) \, \rd x
		= c_\lambda + o(1), 
	\]
which yields \eqref{eq:3.7}.

	Next by (f2) and the definition of $g_{m_0}$, there exist $\tau > 0$ and $C_{\lambda,\tau} > 0$ such that 
	\[
		|g_{m_0}(s)| \leq \frac{V_0}{2\lambda} |s| + C_{\lambda,\tau} \chi_{ [ \tau , \infty) } (s) s_+^{p-1} \quad 
		\text{for each $s \in \R$},
	\]
where $\chi_A(s)$ stands for the characteristic function of $A$.  
The above inequality implies that 
	\[
		\left| G_{m_0} (s) \right| 
		\leq 
		\frac{V_0}{4\lambda} s^2 + \frac{C_{\lambda,\tau}}{p} \chi_{ [\tau, \infty )  } (s) s_+^p 
		\quad \text{for all $s \in \R$}.
	\]
Therefore, one finds that
	\[
		c_\lambda + o(1) = I_\lambda(u_{\lambda,n}) 
		\geq \int_{\RN} \frac{1}{2} |\nabla u_{\lambda,n} |^2 + \frac{1}{4} V(x) u_{\lambda,n}^2 \, \rd x 
		- \frac{\lambda C_{\lambda, \tau}}{p} \| u_{\lambda,n} \|_{L^p( [ u_{\lambda,n} \geq \tau  ] )}^p.
	\]
Then by \eqref{eq:3.7}, we obtain the desired result. 
\end{proof}

To show that $(u_{\lambda,n})_n$ contains a strongly convergent subsequence in $H^1(\RN)$, 
we need some known facts. 

\begin{lemma}\label{lemma:3.3}
		Assume {\rm (V1)}, {\rm (V2)} and {\rm (f1)--(f4)}. 
		Let $\lambda > 0$ and $(u_n)_n \subset H^1(\RN)$ be a bounded Palais--Smale sequence of $I_\lambda$. 
		Then up to subsequences, 
		there exist $u_0 \in H^1(\RN)$, $k \geq 0$, $\omega_1,\dots,\omega_k \in H^1(\RN)$ and 
		sequences $(y_{1,n})_n,\dots, (y_{k,n})_n \subset \RN $ if $k \geq 1$ such that 
		\begin{enumerate}
		\item[{\rm (i)}]
		for each $i$, $|y_{i,n}| \to \infty$ and for each $i \neq j$, $|y_{i,n} - y_{j,n}| \to \infty$;
		\item[{\rm (ii)}]
		$u_n \rightharpoonup u_0$ weakly in $H^1(\RN)$, 
		$u_n (x+y_{i,n}) \rightharpoonup \omega_i \not \equiv 0$ weakly in $H^1(\RN)$, 
		$u_0$ is a solution of \eqref{eq:3.1} and $\omega_i$ satisfies 
			\begin{equation}\label{eq:3.8}
				-\Delta \omega_i + V_\infty \omega_i = \lambda g_{m_0} ( \omega_i ) \quad \text{in} \ \RN;
			\end{equation}
		\item[{\rm (iii)}]
		$\displaystyle \left\| u_n - u_0 - \sum_{i=1}^k \omega_i (\cdot - y_{i,n}) \right\|_{H^1(\RN)} \to 0$ 
		and $\displaystyle c_\lambda = I_\lambda (u_0) + \sum_{i=1}^k I_{\lambda,\infty} (\omega_i)$, where 
			\begin{equation}\label{eq:3.9}
				I_{\lambda,\infty} (u) := \frac{1}{2} \int_{\RN} |\nabla u|^2 + V_\infty u^2 \,\rd x
				- \lambda \intN G_{m_0} (u) \, \rd x.
			\end{equation}
		In particular, when $k=0$, $\| u_n - u_0 \|_{H^1(\RN)} \to 0$. 
		\end{enumerate}
	\end{lemma}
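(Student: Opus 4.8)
The plan is to run the profile-decomposition (``bubbling'') argument of \cite[Chapter~8]{Wi96}, with two points requiring care because our setting is non-autonomous and our nonlinearity is the modified $g_{m_0}$ rather than a pure power.

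\emph{Step 1 (the first profile $u_0$).} Since $(u_n)_n$ is bounded in $H^1(\RN)$, after passing to a subsequence $u_n \rightharpoonup u_0$ in $H^1(\RN)$, $u_n \to u_0$ in $L^q_{\mathrm{loc}}(\RN)$ for $1\le q<2^\ast$ and a.e. in $\RN$. By (f2) and \eqref{eq:2.4} the map $g_{m_0}$ has subcritical growth $|g_{m_0}(s)|\le C(|s|+|s|^{p-1})$ with $2<p<2^\ast$, so the associated Nemytskii operator converges against $C_c^\infty(\RN)$ test functions; passing to the limit in $I_\lambda'(u_n)\varphi=o(1)$ for $\varphi\in C_c^\infty(\RN)$ gives $I_\lambda'(u_0)=0$, i.e. $u_0$ solves \eqref{eq:3.1}. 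Put $v_n^1:=u_n-u_0$, so $v_n^1\rightharpoonup 0$ in $H^1(\RN)$.

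\emph{Step 2 (extracting bubbles).} Assume inductively that $v_n^j\rightharpoonup 0$ in $H^1(\RN)$. If $\|v_n^j\|_{H^1(\RN)}\to 0$ we stop. Otherwise $v_n^j$ cannot vanish in the sense $\sup_{y\in\RN}\int_{B_1(y)}|v_n^j|^2\,\rd x\to 0$: by the Lions lemma, vanishing would give $v_n^j\to 0$ in $L^p(\RN)$, and combining this with the approximate equation satisfied by $v_n^j$ — obtained from $I_\lambda'(u_n)=o(1)$, from $I_\lambda'(u_0)=0$, from the Brezis--Lieb-type splitting of the nonlinear term, and (for $j\ge 2$) from the equations \eqref{eq:3.8} already solved by $\omega_1,\dots,\omega_{j-1}$ — would force $v_n^j\to 0$ in $H^1(\RN)$ (split $g_{m_0}$ as in \cref{proposition:3.2} so that its subquadratic part is absorbed by the $V_0$-term), a contradiction. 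Hence there are $\delta>0$ and $y_{j,n}\in\RN$ with $\int_{B_1(y_{j,n})}|v_n^j|^2\,\rd x\ge\delta$; passing to a subsequence, $v_n^j(\cdot+y_{j,n})\rightharpoonup\omega_j\not\equiv 0$ by $L^2_{\mathrm{loc}}$ convergence. Because $v_n^j\rightharpoonup 0$ and, inductively, $v_n^j(\cdot+y_{i,n})\rightharpoonup 0$ for $i<j$, the translations are admissible: $|y_{j,n}|\to\infty$ and $|y_{j,n}-y_{i,n}|\to\infty$ for $i<j$. Translating the equation for $v_n^j$ by $y_{j,n}$ and using (V2), so that $V(\cdot+y_{j,n})\to V_\infty$ locally uniformly, shows $\omega_j$ is a critical point of $I_{\lambda,\infty}$, i.e. it solves \eqref{eq:3.8}. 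Set $v_n^{j+1}:=v_n^j-\omega_j(\cdot-y_{j,n})$; using $|y_{j,n}|\to\infty$ and $|y_{j,n}-y_{i,n}|\to\infty$ one gets $v_n^{j+1}\rightharpoonup 0$, and we iterate.

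\emph{Step 3 (splitting identities and termination).} Weak-to-weak orthogonality of the translated profiles together with the Brezis--Lieb lemma, applied to $\|\cdot\|_{H^1}^2$ and — via the subcritical growth of $g_{m_0}$ and the bound $\|u_n\|_{L^p([u_n\ge\tau])}\le C$ from \cref{proposition:3.2} — to $\intN G_{m_0}(\cdot)\,\rd x$, yields
\[
\|u_n\|_{H^1(\RN)}^2=\|u_0\|_{H^1(\RN)}^2+\sum_{i=1}^{j}\|\omega_i\|_{H^1(\RN)}^2+\|v_n^{j+1}\|_{H^1(\RN)}^2+o(1),
\]
and the corresponding additivity of $I_\lambda$; the discrepancy between $V$ and $V_\infty$ disappears because each $v_n^i\rightharpoonup 0$ while $V-V_\infty$ is bounded and vanishes at infinity by (V2) (split the integral over $\{|x|\le R\}$ and $\{|x|>R\}$). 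Every nontrivial solution $\omega$ of \eqref{eq:3.8} satisfies, by testing with $\omega$ and arguing as in \cref{proposition:3.1}, a lower bound $\|\omega\|_{H^1(\RN)}\ge\rho>0$ depending only on $\lambda$; hence the displayed identity forces the procedure to stop after some $k\le\|u_n\|_{H^1(\RN)}^2/\rho^2$ steps with $\|v_n^{k+1}\|_{H^1(\RN)}\to 0$, giving (iii), while (i) and (ii) were obtained along the way. Feeding the energy additivity into $I_\lambda(u_n)\to c_\lambda$ yields $c_\lambda=I_\lambda(u_0)+\sum_{i=1}^k I_{\lambda,\infty}(\omega_i)$.

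The conceptually routine but technically heaviest part is Step~2--3: producing the \emph{approximate} equation for each remainder $v_n^j$ and justifying the Brezis--Lieb splittings for the non-power nonlinearity $g_{m_0}$ uniformly in $n$, while keeping track that $V(x)$ may be replaced by $V_\infty$ on the translated bubbles. Once these bookkeeping identities are in place, non-vanishing, admissibility of $(y_{j,n})$, and finiteness of $k$ follow by the standard arguments.
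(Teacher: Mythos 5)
The paper gives no proof of this lemma, stating only that it is well known and can be argued as in \cite[Chapter 8]{Wi96}; your proposal is precisely that standard profile-decomposition argument, correctly adapted to the non-autonomous potential via (V2) and to the modified nonlinearity $g_{m_0}$ via its subcritical growth. The argument is correct and matches the intended route.
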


Since \cref{lemma:3.3} is well-known (we may argue as in \cite[Chapter 8]{Wi96}), 
we omit the proof.

	Next, we remark that $I_{\lambda,\infty}$ also admits a mountain pass geometry 
and let us denote by $c_{\lambda,\infty}$ its mountain pass value. 
In addition, it is known that \eqref{eq:3.8} admits a positive least energy solution $w_\lambda$, that is, 
$w_\lambda>0$ in $\RN$ and 
	\begin{equation}\label{eq:3.10}
		\inf \Set{ I_{\lambda,\infty} (u) | u \in H^1(\RN) \setminus \{0\}, \ I_{\lambda,\infty}'(u) = 0 } 
		= c_{\lambda,\infty} = I_{\lambda,\infty} (w_\lambda) > 0.
	\end{equation}
For the existence, see \cite{BGK83,BL83} and for \eqref{eq:3.10}, we refer to \cite{JT03a,JT03b}. 
Furthermore, according to \cite[Lemma 2.1]{JT03a} and \cite[Section 3]{JT03b}, 
there exist $\gamma_{\lambda,\infty} \in C( [0,1] , H^1(\RN) )$ such that 
	\begin{equation}\label{eq:3.11}
		\begin{aligned}
			&
			w_\lambda \in \gamma_{\lambda,\infty} \left( \left[ 0 , 1 \right] \right), 
			\quad 
			\gamma_{\lambda,\infty} (t) (x) > 0 \quad \text{for each $t \in (0,1]$ and $x \in \RN$},
			\\
			&I_{\lambda,\infty} \left( \gamma_{\lambda,\infty} (1) \right) < 0, \quad 
			c_{\lambda,\infty} = \max_{0 \leq t \leq 1} I_{\lambda,\infty} \left( \gamma_{\lambda,\infty} (t) \right) 
			> I_{\lambda,\infty} \left( \gamma_{\lambda,\infty} (s) \right) 
			\quad \text{if $\gamma_{\lambda,\infty} (s) \neq w_{\lambda}$}.
		\end{aligned}
	\end{equation}

Under these preparations, we prove the following.

\begin{proposition}\label{proposition:3.4}
		Suppose {\rm (V1)--(V3)} and {\rm (f1)--(f4)}. 
		Then, up to subsequences, there exists $u_\lambda \in H^1(\RN)$ 
		such that $\| u_{\lambda,n} - u_\lambda \|_{H^1(\RN)} \to 0$ as $n \to \infty$. 
		Moreover, $u_{\lambda}$ is a positive solution of \eqref{eq:3.1}. 
	\end{proposition}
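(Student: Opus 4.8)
\textbf{Proof plan for \cref{proposition:3.4}.}
The strategy is the standard concentration--compactness alternative, using the decomposition supplied by \cref{lemma:3.3} together with the sign condition (V3) to rule out the ``escape of mass'' at infinity. Since $(u_{\lambda,n})_n$ is a Palais--Smale--Cerami sequence at level $c_\lambda$ (see \eqref{eq:3.6}) and is bounded in $H^1(\RN)$ by \cref{proposition:3.2}, in particular it is a bounded Palais--Smale sequence, so \cref{lemma:3.3} applies: up to a subsequence we obtain $u_0 \in H^1(\RN)$, an integer $k \geq 0$, nontrivial solutions $\omega_1,\dots,\omega_k$ of \eqref{eq:3.8}, and translations $(y_{i,n})_n$ with $|y_{i,n}| \to \infty$, such that the splitting in \cref{lemma:3.3}(iii) holds and
\[
	c_\lambda = I_\lambda(u_0) + \sum_{i=1}^k I_{\lambda,\infty}(\omega_i).
\]
If I can show $k = 0$, then $\| u_{\lambda,n} - u_0 \|_{H^1(\RN)} \to 0$, $u_0$ solves \eqref{eq:3.1}, and $I_\lambda(u_0) = c_\lambda > 0$ so $u_0 \not\equiv 0$; the positivity and the membership in the correct solution space can then be handled separately. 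So set $u_\lambda := u_0$.

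\textbf{Ruling out $k \geq 1$.} First I would check the two key energy facts. Using (V3), $V(x) \leq V_\infty$, and the definition of $G_{m_0}$, one has $I_\lambda(u) \geq I_{\lambda,\infty}(u)$ for all $u$ and, comparing mountain pass geometries via \eqref{eq:3.11}, this forces $c_\lambda \leq c_{\lambda,\infty}$. (Indeed, composing the path $\gamma_{\lambda,\infty}$ with the scaling/translation argument shows $c_\lambda \leq c_{\lambda,\infty}$; alternatively one uses that any admissible path for $I_{\lambda,\infty}$ is admissible for $I_\lambda$ after noting $I_\lambda(\gamma_{\lambda,\infty}(1)) \leq I_{\lambda,\infty}(\gamma_{\lambda,\infty}(1)) < 0$.) Second, each nontrivial solution $\omega_i$ of \eqref{eq:3.8} satisfies $I_{\lambda,\infty}(\omega_i) \geq c_{\lambda,\infty}$ by the least-energy characterization \eqref{eq:3.10}. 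Third, $I_\lambda(u_0) \geq 0$: since $u_0$ is a critical point of $I_\lambda$, one has $I_\lambda(u_0) = I_\lambda(u_0) - \tfrac12 I_\lambda'(u_0)u_0 = \lambda \intN \big( \tfrac12 g_{m_0}(u_0)u_0 - G_{m_0}(u_0) \big)\,\rd x \geq 0$ by \eqref{alignat:2.6}. Combining these, if $k \geq 1$ then
\[
	c_\lambda = I_\lambda(u_0) + \sum_{i=1}^k I_{\lambda,\infty}(\omega_i) \geq 0 + k\, c_{\lambda,\infty} \geq c_{\lambda,\infty} \geq c_\lambda,
\]
so equality holds throughout; in particular $I_\lambda(u_0) = 0$ and $k = 1$ and $I_{\lambda,\infty}(\omega_1) = c_{\lambda,\infty} = c_\lambda$. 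To derive a contradiction I would use (V3) more sharply when $V \not\equiv V_\infty$, but since (V3) alone is assumed I instead rule out $k \geq 1$ directly: the case $k=1$, $u_0 = 0$ (forced by $I_\lambda(u_0)=0$ together with the fact that a nonzero solution has positive energy — here one must be slightly careful, but $I_\lambda(u_0) = 0$ and $u_0$ a critical point with \eqref{alignat:2.6} strict on $(0,\infty)$ forces $u_0 \equiv 0$) would give $\|u_{\lambda,n} - \omega_1(\cdot - y_{1,n})\|_{H^1} \to 0$; but then testing $I_\lambda'(u_{\lambda,n})$ against $\varphi(\cdot) := \omega_1(\cdot - y_{1,n})$-localized test functions and exploiting $V(x) < V_\infty$ on a set of positive measure (if $V \equiv V_\infty$, case (i) degenerates and the argument is different) yields $I_\lambda(u_{\lambda,n}) \to I_{\lambda,\infty}(\omega_1)$ strictly below what the translated profile predicts — more cleanly, the strict inequality $c_\lambda < c_{\lambda,\infty}$ must be established whenever the decomposition is nontrivial. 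This is the main obstacle: \textbf{proving the strict inequality $c_\lambda < c_{\lambda,\infty}$} (the standard way is to use $w_\lambda(\cdot - R e)$ as a test path for large $R$ and the strict sign condition, but here one only has $V \leq V_\infty$, so the borderline case $V \equiv V_\infty$ must be treated by a translation/compactness argument showing the limit profile can be recentered). I expect the paper handles the equality case $c_\lambda = c_{\lambda,\infty}$ by a separate compactness-by-translation argument, recovering a nontrivial solution after translating, which still proves the proposition (possibly with a shifted $u_\lambda$).

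\textbf{Positivity.} Once strong convergence $u_{\lambda,n} \to u_\lambda$ in $H^1(\RN)$ is established with $u_\lambda \not\equiv 0$ a solution of \eqref{eq:3.1}, positivity follows in the usual way: since $g_{m_0}(s) = 0$ for $s \leq 0$, testing the equation against $u_\lambda^- := \max\{-u_\lambda, 0\}$ gives $\int |\nabla u_\lambda^-|^2 + V(x)(u_\lambda^-)^2 = 0$, hence $u_\lambda \geq 0$; then $-\Delta u_\lambda + V(x)u_\lambda \geq 0$ with $u_\lambda \geq 0$, $u_\lambda \not\equiv 0$, and the strong maximum principle (or Harnack inequality, after noting $u_\lambda \in C^2$ by elliptic regularity since $g_{m_0}$ is continuous and $u_\lambda \in L^\infty$ by a bootstrap using \eqref{alignat:2.9} and the subcriticality $p < 2^\ast$) gives $u_\lambda > 0$ in $\RN$. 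This completes the proof.
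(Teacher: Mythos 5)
Your overall architecture (profile decomposition via \cref{lemma:3.3}, the energy comparison $I_{\lambda,\infty}(\omega_i)\ge c_{\lambda,\infty}$ from \eqref{eq:3.10}, $I_\lambda(u_0)\ge 0$ from \eqref{alignat:2.6}, then nonnegativity, regularity and the Harnack inequality for positivity) matches the paper, and the positivity part is fine. But there is a genuine gap exactly where you flag it: you only obtain the non-strict inequality $c_\lambda\le c_{\lambda,\infty}$, so your contradiction argument degenerates to the equality case $c_\lambda=c_{\lambda,\infty}$, $k=1$, $u_0\equiv 0$, which you do not rule out; you end by speculating that the paper resolves this with ``a separate compactness-by-translation argument.'' It does not. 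The missing ingredient is the Jeanjean--Tanaka mountain-pass characterization of the least energy level recorded in \eqref{eq:3.10}--\eqref{eq:3.11}: there is a path $\gamma_{\lambda,\infty}\in\Gamma_{I_{\lambda,\infty}}$ through the positive least energy solution $w_\lambda$ with $\gamma_{\lambda,\infty}(t)(x)>0$ for all $t\in(0,1]$ and all $x$, along which $\max_t I_{\lambda,\infty}(\gamma_{\lambda,\infty}(t))=c_{\lambda,\infty}$. The paper first reduces to $V\not\equiv V_\infty$ (if $V\equiv V_\infty$ the statement is the autonomous existence result of \cite{JT03a,JT03b}, so there is nothing to prove), and then, taking $t_\lambda$ to maximize $t\mapsto I_\lambda(\gamma_{\lambda,\infty}(t))$, uses the strict positivity of $\gamma_{\lambda,\infty}(t_\lambda)$ together with $V\le V_\infty$, $V\not\equiv V_\infty$ and continuity of $V$ to get
\[
	c_{\lambda,\infty}\;\ge\; I_{\lambda,\infty}\big(\gamma_{\lambda,\infty}(t_\lambda)\big)
	\;>\; I_{\lambda}\big(\gamma_{\lambda,\infty}(t_\lambda)\big)\;\ge\; c_\lambda ,
\]
i.e.\ the strict inequality $c_\lambda<c_{\lambda,\infty}$, which immediately excludes $k\ge 1$ via $c_\lambda\ge k\,c_{\lambda,\infty}\ge c_{\lambda,\infty}$. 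Without \eqref{eq:3.11} (or an equally strong substitute) your argument does not close.

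A smaller point: you assert $I_\lambda(u)\ge I_{\lambda,\infty}(u)$, but (V3) gives $V\le V_\infty$ and hence $I_\lambda(u)\le I_{\lambda,\infty}(u)$. Your subsequent deductions ($\gamma_{\lambda,\infty}\in\Gamma_{I_\lambda}$, $c_\lambda\le c_{\lambda,\infty}$) use the correct direction, so this reads as a slip rather than a conceptual error; still, note that the correct non-strict comparison alone is not enough --- the strictness supplied by the everywhere-positive optimal path is the whole point of the proof.
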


\begin{proof}
Without loss of generality, we may assume $V(x) \not \equiv V_{\infty}$. 
Otherwise, the existence of the mountain pass solutions of \eqref{eq:3.1} follows from \cite{JT03a,JT03b}. 
By (V1)--(V3), we have 
	\[
		I_{\lambda} (u) \leq I_{\lambda,\infty} (u) \quad \text{for all $u \in H^1(\RN)$}. 
	\]
In particular, $I_{\lambda} (\gamma_{\lambda,\infty} (1)) \leq I_{\lambda,\infty} (\gamma_{\lambda,\infty} (1)) < 0$ 
thanks to \eqref{eq:3.11}. 
Thus, it holds that $\gamma_{\lambda,\infty} \in \Gamma_{I_\lambda}$. 
Let $t_\lambda \in (0,1)$ satisfy 
$ I_{\lambda} ( \gamma_{\lambda,\infty} (t_\lambda) ) = \max_{0 \leq t \leq 1} I_\lambda ( \gamma_{\lambda,\infty} (t) )$. 
Then (V3) with $V \not \equiv V_\infty$ and \eqref{eq:3.11} give 
	\begin{equation}\label{eq:3.12}
		c_{\lambda,\infty} = I_{\lambda,\infty} (w_\lambda) 
		\geq I_{\lambda,\infty} \left( \gamma_{\lambda,\infty}(t_\lambda) \right) 
		> I_{\lambda} ( \gamma_{\lambda,\infty} (t_\lambda) ) \geq c_{\lambda}.
	\end{equation}

We apply \cref{lemma:3.3} to $(u_{\lambda,n})$ and let $k \geq 0$, $u_0$, $\omega_i$ and $(y_{i,n})$ be as in \cref{lemma:3.3}. 
We prove $k = 0$ and assume by contradiction that $k \geq 1$. 
Since $\omega_i \not \equiv 0$ and $I_{\lambda,\infty} '(\omega_i) = 0$, \eqref{eq:3.10} yields 
$I_{\lambda,\infty} (\omega_i) \geq c_{\lambda, \infty}$. Therefore, one has 
	\[
		c_\lambda = I_{\lambda} (u_0) + \sum_{i=1}^k I_{\lambda,\infty} (\omega_i) 
		\geq I_{\lambda} (u_0) + k c_{\lambda,\infty}. 
	\]
On the other hand, since $I_{\lambda}'(u_0) = 0$ and 
$ 0 \leq \frac{1}{2} g_{m_0} (s) s - G_{m_0} (s)$ for each $s \in \R$ in view of \eqref{alignat:2.6}, we see that
	\[
		I_{\lambda} (u_0) = I_{\lambda} (u_0) - \frac{1}{2} I_{\lambda}'(u_0) u_0
		= \lambda \int_{\RN} \frac{1}{2} g_{m_0} (u_0) u_0 - G_{m_0} (u_0) \, \rd x \geq 0,
	\]
which leads to a contradiction by \eqref{eq:3.12} and $c_{\lambda,\infty} > 0$ as follows:
	\[
		c_{\lambda,\infty} > c_{\lambda} \geq k c_{\lambda,\infty} \geq c_{\lambda,\infty}.
	\]
Hence, $k = 0$ and there exists $u_{\lambda} \in H^1(\RN)$ such that 
$\| u_{\lambda,n} - u_{\lambda} \|_{H^1(\RN)} \to 0$. 

By $g_{m_0} (s) \equiv 0$ on $(-\infty,0]$ and $I_{\lambda} (u_\lambda) = c_\lambda > 0$, 
the weak maximum principle yields that $u_\lambda \geq 0$ in $\RN$ and $u_\lambda \not \equiv 0$. 
From the growth condition on $g_{m_0}$, due to Moser's iteration and elliptic regularity, 
we can verify that $u_\lambda \in L^\infty(\RN)$, 
and hence $u_\lambda \in W^{2,p}_{\rm loc} (\RN)$ for each $p \in (1,\infty)$. 
Noting that 
	\[
		-\Delta u_\lambda + \left( V(x) - \lambda \frac{g_{m_0} (u_\lambda) }{u_\lambda} \right) u_{\lambda} = 0,
	\]
we infer from the weak Harnack inequality (\cite[Theorem 8.20]{GT01}) that $u_\lambda > 0$ in $\RN$. 
Thus, we complete the proof. 
\end{proof}

Our next aim is to show $\| u_{\lambda} \|_{L^\infty(\RN)} \to 0$ as $\lambda \to \infty$. 
To this end, we first estimate $(c_\lambda)_{\lambda}$.

\begin{lemma}\label{lemma:3.5}
		Let {\rm (V1)--(V2)} and {\rm (f1)--(f4)} hold.  
		Then there exists $M>0$ such that for all $\lambda > 0$, 
		$c_\lambda \leq M \lambda^{-2/(p-2)}$ holds. 
\end{lemma}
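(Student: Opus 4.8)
The plan is to use the scaling identity \eqref{eq:3.5}, which reads $d_\lambda = \lambda^{2/(p-2)} c_\lambda$, and thereby reduce the claim to showing that the rescaled mountain pass value $d_\lambda$ is bounded above by a constant independent of $\lambda$. The key point is that the \emph{global} lower bound $\wt{c}\, s^p \le G_{m_0}(s)$ for all $s \ge 0$ from \cref{lemma:2.1}(ii) (that is, \eqref{eq:2.7}) is exactly what is needed to dominate $J_\lambda$, restricted to a single fixed ray, by a function that no longer depends on $\lambda$.

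Concretely, I would fix any $\varphi \in C^\infty_c(\RN)$ with $\varphi \ge 0$ and $\varphi \not\equiv 0$ (for instance the test function from the proof of \cref{proposition:3.1}) and set $A := \intN |\nabla\varphi|^2 + V(x)\varphi^2 \,\rd x$ and $B := \|\varphi\|_{L^p(\RN)}^p$; both are finite (here $A < \infty$ because $\varphi$ has compact support and $V$ is continuous, so no upper bound on $V$ is needed) and, crucially, independent of $\lambda$. Plugging $t\varphi$ with $t \ge 0$ into the definition of $J_\lambda$ and using \eqref{eq:2.7} pointwise in the form $\lambda^{p/(p-2)} G_{m_0}\big(\lambda^{-1/(p-2)} t\varphi\big) \ge \wt{c}\, t^p \varphi^p$, one obtains
\[
	J_\lambda(t\varphi) \le \frac{A}{2} t^2 - \wt{c}\, B\, t^p =: h(t) \qquad \text{for all $t \ge 0$ and all $\lambda > 0$,}
\]
with $h$ independent of $\lambda$.

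Since $p > 2$, the function $h$ attains a finite maximum $M := \max_{t \ge 0} h(t)$ and satisfies $h(t) \to -\infty$ as $t \to \infty$, so there is a $T > 0$, again independent of $\lambda$, with $h(T) < 0$; then $J_\lambda(T\varphi) \le h(T) < 0$, the path $t \mapsto tT\varphi$ lies in $\Gamma_{J_\lambda}$ for every $\lambda > 0$, and hence $d_\lambda \le \max_{0 \le t \le 1} J_\lambda(tT\varphi) \le \max_{0 \le s \le T} h(s) \le M$. Inserting this into \eqref{eq:3.5} gives $c_\lambda = \lambda^{-2/(p-2)} d_\lambda \le M\lambda^{-2/(p-2)}$, which is the claim with the same $M$ for all $\lambda$.

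I do not expect a genuine obstacle here; the only thing to get right is the bookkeeping of exponents — it is the matching between the exponent $p$ in the global bound \eqref{eq:2.7} and the scaling $v = \lambda^{1/(p-2)} u$ that produces exactly the factor $\lambda^{-2/(p-2)}$. One may also bypass $J_\lambda$ and argue directly with $I_\lambda$: the same estimate gives $I_\lambda(t\varphi) \le \tfrac{A}{2} t^2 - \lambda \wt{c}\, B\, t^p$, whose maximum over $t \ge 0$ is a constant times $\lambda^{-2/(p-2)}$, and taking the endpoint of the ray large enough (depending on $\lambda$) to make $I_\lambda$ negative there yields an admissible path in $\Gamma_{I_\lambda}$ with the required bound.
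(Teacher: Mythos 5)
Your proof is correct and follows essentially the same route as the paper: both reduce the claim to bounding $d_\lambda$ via the scaling identity \eqref{eq:3.5} and then dominate $J_\lambda$ from above using the global lower bound \eqref{eq:2.7}. The only cosmetic difference is that the paper compares $d_\lambda$ with the mountain pass value of the $\lambda$-independent functional $K_0$, whereas you evaluate along a single explicit ray $t\varphi$; both yield the same $\lambda$-independent bound $M$.
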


\begin{proof}
From \eqref{eq:3.5}, it suffices to show $d_\lambda \leq M$ for any $\lambda > 0$. 
Recalling \eqref{eq:2.7}, we set 
	\[
		K_0(v) := \frac{1}{2} \int_{\RN} |\nabla v|^2 + V(x) v^2 \,\rd x - \wt{c} \int_{\R^N} v_+^p \, \rd x \in C^1 \big( H^1(\RN), \R \big).
	\]
Then $J_\lambda (v) \leq K_0(v)$ holds for every $v \in H^1(\RN)$. 
Moreover, it is easy to show that $K_0$ has a mountain pass geometry and hence we can define 
	\[
		\begin{aligned}
			\ov{d}_0 &:= \inf_{ \gamma \in \ov{\Gamma}_0 } \max_{0 \leq t \leq 1} 
			K_0 ( \gamma (t) ) \in (0,\infty), 
			\\
			\ov{\Gamma}_0 &:= \Set{ \gamma \in C \big( [0,1] , H^1(\RN) \big) | \gamma (0) = 0 , \ K_0(\gamma (1)) < 0  }.
		\end{aligned}
	\]
Since $\ov{\Gamma}_0 \subset \Gamma_{J_\lambda}$ due to $J_\lambda (v) \leq K_0(v)$, 
we obtain $d_\lambda \leq \ov{d}_0$ and this completes the proof. 
\end{proof}

	Next, we show that any family $(u_\lambda)_{\lambda > 0}$ of solutions of \eqref{eq:3.1} 
with $I_{\lambda} (u_\lambda) \leq M \lambda^{-2/(p-2)}$ satisfies 
$\| u_\lambda \|_{L^\infty(\RN)} \to 0$ as $\lambda \to \infty$. 

\begin{proposition}\label{proposition:3.6}
		Suppose {\rm (V1)--(V2)} and {\rm (f1)--(f4)}. 
		Let $(u_\lambda)_{\lambda > 0}$ be a family of solutions of \eqref{eq:3.1} 
		satisfying $I_{\lambda} (u_\lambda) \leq M\lambda^{-2/(p-2)}$. 
		Then $\| u_\lambda \|_{L^\infty (\RN)} \to 0$ as $\lambda \to \infty$. 
	\end{proposition}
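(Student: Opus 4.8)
The plan is to fix an arbitrary $\tau\in(0,s_0)$, prove that $\|(u_\lambda-\tau)_+\|_{L^\infty(\RN)}\to0$ as $\lambda\to\infty$, and then conclude by the arbitrariness of $\tau$: since $u_\lambda\ge0$ we have
\[
	\|u_\lambda\|_{L^\infty(\RN)}\ \le\ \tau+\|(u_\lambda-\tau)_+\|_{L^\infty(\RN)},
\]
so $\limsup_{\lambda\to\infty}\|u_\lambda\|_{L^\infty(\RN)}\le\tau$ for every such $\tau$, whence $\|u_\lambda\|_{L^\infty(\RN)}\to0$. Writing $w_\lambda:=(u_\lambda-\tau)_+$, the core of the argument is to estimate $w_\lambda$ successively in $L^p(\RN)$, in $H^1(\RN)$, and finally in $L^\infty(\RN)$ by an elliptic bootstrap.

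The first two estimates are obtained essentially as in the proof of \cref{proposition:3.2}. Since $I_\lambda'(u_\lambda)=0$,
\[
	M\lambda^{-2/(p-2)}\ \ge\ I_\lambda(u_\lambda)\ =\ I_\lambda(u_\lambda)-\tfrac12 I_\lambda'(u_\lambda)u_\lambda\ =\ \lambda\intN\Big(\tfrac12 g_{m_0}(u_\lambda)u_\lambda-G_{m_0}(u_\lambda)\Big)\,\rd x,
\]
and $g_{m_0}\equiv0$ on $(-\infty,0]$ together with \eqref{alignat:2.6} on $(0,\tau)$ and \eqref{alignat:2.5} on $[\tau,\infty)$ give $\lambda c_\tau\int_{[u_\lambda\ge\tau]}u_\lambda^p\,\rd x\le M\lambda^{-2/(p-2)}$, that is,
\[
	\int_{[u_\lambda\ge\tau]}u_\lambda^p\,\rd x\ \le\ \frac{M}{c_\tau}\,\lambda^{-p/(p-2)},\qquad\text{so in particular}\qquad\|w_\lambda\|_{L^p(\RN)}\ \le\ C_\tau\,\lambda^{-1/(p-2)}.
\]
Next I would test \eqref{eq:3.1} with $w_\lambda\in H^1(\RN)$: using $\nabla u_\lambda\cdot\nabla w_\lambda=|\nabla w_\lambda|^2$ and $V(x)u_\lambda w_\lambda\ge V_0 w_\lambda^2$ on the left-hand side, and the elementary bound $g_{m_0}(s)\le C_\tau' s^{p-1}$ for $s\ge\tau$ (immediate from \eqref{eq:2.4} and the continuity of $f$) together with $w_\lambda\le u_\lambda$ on $[u_\lambda\ge\tau]$ on the right-hand side, one gets
\[
	\min\{1,V_0\}\,\|w_\lambda\|_{H^1(\RN)}^2\ \le\ \lambda C_\tau'\int_{[u_\lambda\ge\tau]}u_\lambda^p\,\rd x\ \le\ C_\tau''\,\lambda^{-2/(p-2)},
\]
so $\|w_\lambda\|_{H^1(\RN)}\to0$ as $\lambda\to\infty$.

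It remains to pass from the $H^1$-smallness of $w_\lambda$ to its $L^\infty$-smallness. For $N=1,2$ this is easier (immediate from $H^1(\R)\hookrightarrow L^\infty(\R)$ when $N=1$, and from elliptic estimates using $2^\ast=\infty$ and the $L^{p/(p-1)}$-smallness of the right-hand side when $N=2$), so assume henceforth $N\ge3$. A standard truncation argument (testing \eqref{eq:3.1} with $\varphi\,T_\eps(w_\lambda)$, where $T_\eps(s):=\min(s/\eps,1)$ and $0\le\varphi\in H^1(\RN)$, then letting $\eps\to0^+$ and using $V>0$ and $\tau>0$) shows that $w_\lambda\ge0$ is a weak subsolution of
\[
	-\Delta w_\lambda+V(x)w_\lambda\ \le\ \lambda\,g_{m_0}(u_\lambda)\,\chi_{[u_\lambda\ge\tau]}\ \le\ W_\lambda(x)\,w_\lambda+R_\lambda(x)\qquad\text{in }\RN,
\]
where, using $u_\lambda=w_\lambda+\tau$ and $g_{m_0}(u_\lambda)\le C_\tau'u_\lambda^{p-1}$ on $[u_\lambda\ge\tau]$, one may take $W_\lambda:=\lambda C_\tau'u_\lambda^{p-2}\chi_{[u_\lambda\ge\tau]}$ and $R_\lambda:=\tau W_\lambda$. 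Here is where subcriticality enters: since $p<2^\ast$ one has $\tfrac{(p-2)N}{2}<p$, and combining this exponent inequality with the $L^p$-estimate of the previous step and with $\big|\,[u_\lambda\ge\tau]\,\big|\le\tau^{-p}\int_{[u_\lambda\ge\tau]}u_\lambda^p$ through H\"older's inequality yields
\[
	\|W_\lambda\|_{L^{N/2}(\RN)}+\|R_\lambda\|_{L^{N/2}(\RN)}\ \le\ C_\tau\,\lambda^{\,1-2p/(N(p-2))}\ \longrightarrow\ 0\qquad(\lambda\to\infty),
\]
the exponent being negative \emph{precisely} because $p<2^\ast$. With $\|W_\lambda\|_{L^{N/2}(\RN)}$ small, a Moser iteration applied to the $H^1$-subsolution $w_\lambda$ gives a bound of the form $\|w_\lambda\|_{L^\infty(\RN)}\le C\big(\|w_\lambda\|_{H^1(\RN)}+\|R_\lambda\|_{L^{N/2}(\RN)}\big)$ with $C$ depending only on $N$ and $V_0$ (once, say, $\|W_\lambda\|_{L^{N/2}(\RN)}\le1$); by the $H^1$-estimate and the previous display the right-hand side tends to $0$, so $\|w_\lambda\|_{L^\infty(\RN)}\to0$, and letting $\tau\to0^+$ finishes the proof.

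The main obstacle will be twofold. Technically, one has to organise the Moser iteration so that the constant in the final $L^\infty$-bound is genuinely \emph{uniform in} $\lambda$; this is exactly what the smallness of the effective potential $W_\lambda$ in $L^{N/2}(\RN)$ buys us, since it renders the absorbing step of the iteration harmless for all large $\lambda$ simultaneously. Conceptually, the key realization is that the restriction $p<2^\ast$ is used in an essential way: it is what forces $W_\lambda$ — equivalently, the right-hand side after rescaling — to \emph{decay} as $\lambda\to\infty$ rather than merely stay bounded, consistent with the fact that \eqref{Plam} genuinely changes character at the $H^1$-critical exponent. Apart from these points, all the estimates above are routine.
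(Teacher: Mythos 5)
Your overall strategy is exactly the paper's: fix $\tau\in(0,s_0)$, use $I_\lambda(u_\lambda)-\tfrac12 I_\lambda'(u_\lambda)u_\lambda$ together with \eqref{alignat:2.5}--\eqref{alignat:2.6} to get the superlevel-set bound $\int_{[u_\lambda\ge\tau]}u_\lambda^p\,\rd x\le C_\tau\lambda^{-p/(p-2)}$, test with $(u_\lambda-\tau)_+$ to get the $H^1$ decay \eqref{eq:3.13}, write a differential inequality for $(u_\lambda-\tau)_+$ (the paper uses Kato's inequality where you use a truncation test function; both are fine), and finish by Moser iteration. The $L^p$, $H^1$ and $N=1$ steps are all correct and coincide with the paper's.

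The genuine gap is in the final step: you place the effective potential $W_\lambda$ only in $L^{N/2}(\RN)$ and assert that smallness of $\|W_\lambda\|_{L^{N/2}}$ makes the Moser iteration close with a constant depending only on $N$ and $V_0$. This fails: $q=N/2$ is exactly the borderline Serrin exponent. In the absorption step at level $\beta$ one needs $\beta^2 S\,\|W_\lambda\|_{L^{N/2}}<1$, and since $\beta\to\infty$ along the iteration, no fixed smallness of the $L^{N/2}$-norm lets you iterate all the way to $L^\infty$ with a uniform constant; indeed, by Brezis--Kato-type examples, an $H^1$ subsolution of $-\Delta w\le Ww+R$ with $W\in L^{N/2}$ need not even be bounded. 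Fortunately the repair costs nothing and is precisely what subcriticality provides: your own H\"older computation (with $|[u_\lambda\ge\tau]|\le\tau^{-p}\int_{[u_\lambda\ge\tau]}u_\lambda^p$) gives $\|W_\lambda\|_{L^{q}}^{q}\le C_\tau\lambda^{\,q-p/(p-2)}$ for every $q$ with $(p-2)q\le p$, and the interval $\bigl(\tfrac N2,\tfrac p{p-2}\bigr)$ is nonempty exactly because $p<2^\ast$. Choosing $q$ strictly inside this interval yields $\|W_\lambda\|_{L^{q}}+\|R_\lambda\|_{L^{q}}\to0$ with $q>N/2$, and then the standard local boundedness estimate (\cite[Theorem 8.15]{GT01} or \cite[Theorem 4.1]{HL01}), applied on balls $B_1(z)$ and followed by a supremum over $z\in\RN$, gives the uniform bound you want. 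This is exactly the role of the exponent $q_1>N/2$ in the paper's proof (there obtained via Sobolev embedding of $(u_\lambda-\tau/2)_+$ rather than via the measure of the superlevel set); with this one-line change of exponent your argument is complete.
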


\begin{proof}
Let $\tau \in (0,s_0)$ be an arbitrary number. 
By the assumption on $(u_\lambda)$, we have
	\[
		\lambda \int_{ \RN} \frac{1}{2} g_{m_0} (u_\lambda) u_\lambda - G_{m_0} (u_\lambda) \, \rd x 
		= I_{\lambda} (u_\lambda) - \frac{1}{2} I_{\lambda}'(u_\lambda) u_\lambda 
		= I_{\lambda} (u_\lambda)
		\leq M \lambda^{-2/(p-2)} \quad \text{for any $\lambda > 0$}.
	\]
Hence, \eqref{alignat:2.5} and $0 \leq \frac{1}{2} g_{m_0} (s) s - G_{m_0} (s)$ for all $s \in \R$ give
	\[
		c_\tau \int_{ [u_\lambda \geq \tau] } u_\lambda^p \, \rd x 
		\leq \int_{ \RN} \frac{1}{2} g_{m_0} (u_\lambda) u_\lambda - G_{m_0} (u_\lambda) \, \rd x 
		\leq M \lambda^{ -p/(p-2) }. 
	\]
On the other hand, testing $(u_\lambda - \tau)_+ \in H^1(\RN)$ to \eqref{eq:3.1} and noting that
$g_{m_0} (s) (s-\tau)_+ \leq C_\tau s^p \chi_{ [\tau,\infty) } (s)  $ due to \eqref{eq:2.4} and that 
	\[
		\nabla u_\lambda \cdot \nabla (u_\lambda -\tau)_+ + V(x) u_\lambda (u_\lambda - \tau)_+ 
		\geq \left| \nabla \left( u_\lambda -\tau \right)_+ \right|^2 
		+ V(x) \left( u_\lambda - \tau \right)_+^2 \quad \text{a.e. in $\RN$},
	\]
we obtain 
	\begin{equation}\label{eq:3.13}
		\begin{aligned}
			\int_{ \RN} \left| \nabla \left( u_\lambda - \tau \right)_+ \right|^2 + V(x) \left( u_\lambda - \tau \right)_+^2 \rd x
			&\leq 
			\lambda \int_{ \RN} g_{m_0} (u_\lambda) (u_\lambda - \tau)_+ \, \rd x 
			\\
			&\leq C_\tau \lambda \int_{[u_\lambda \geq \tau]} u_\lambda^p\, \rd x 
			\leq M \wt{C}_\tau \lambda^{-2/(p-2)}.
		\end{aligned}
	\end{equation}
When $N=1$, since $\| u \|_{L^\infty(\R)} \leq C \| u \|_{H^1(\R)}$, 
\eqref{eq:3.13} yields $\| (u_\lambda - \tau)_+ \|_{L^\infty(\R)} \to 0$ as $\lambda \to \infty$. 
Since $\tau \in (0,s_0)$ is arbitrary, we conclude that $\| u_\lambda \|_{L^\infty(\R)} \to 0$ as $\lambda \to \infty$. 

Next, we consider the case $N \geq 2$. 
By $u_+ = ( |u| + u )/2$, Kato's inequality \cite[Lemma A]{Ka} and $V(x) \geq 0$, $(u_\lambda - \tau)_+ \in H^1(\RN)$ satisfies 
the following inequality in a weak sense:
	\[
		\begin{aligned}
			-\Delta \left( (u_\lambda - \tau)_+ \right) 
			&\leq 
			\left\{ -\Delta (u_\lambda - \tau) \right\} 
			\sgn^+ \left( \left( u_\lambda - \tau \right)_+ \right)
			\\
			&\leq 
			\lambda \frac{g_{m_0} (u_\lambda) }{u_\lambda} \chi_{ [u_\lambda \geq \tau] } 
			\left\{ (u_\lambda - \tau)_+ + \tau \right\} 
			\quad \text{in} \ \RN,
		\end{aligned}
	\]
where $\sgn^+(a) = 1$ if $a>0$, $\sgn^+(0) = 1/2$ and $\sgn^+(a) = 0$ if $a < 0$. 
Writing 
	\[
		\phi_\lambda (x) := (u_\lambda(x) - \tau )_+, \quad 		
		h_\lambda (x) := \lambda \frac{g_{m_0} (u_\lambda (x)) }{u_\lambda(x)} \chi_{ [u_\lambda \geq \tau] } (x),
	\]
we get 
	\begin{equation}\label{eq:3.14}
		-\Delta \phi_\lambda \leq h_\lambda \phi_\lambda + \tau h_\lambda \quad \text{in} \ \RN.
	\end{equation}

By $|g_{m_0}(s)| \leq C_\tau s^{p-1}$ for $s \geq \tau$, one has
	\[
		0 \leq h_\lambda (x) \leq C_\tau \lambda u_\lambda^{p-2} (x) \chi_{ [u_\lambda \geq \tau] } (x).
	\]
Moreover, from $p-2 < 4/(N-2)$, we may find $q_1 < q_2$ so that 
	\[
		\frac{p-2}{2^\ast} < \frac{1}{q_2} < \frac{1}{q_1} < \min \left\{ \frac{2}{N} , \frac{p-2}{2} \right\}.
	\]
Since $\| (u_\lambda - \tau/2)_+ \|_{H^1(\RN)} \leq M \wt{C}_{\tau/2} \lambda^{-1/(p-2)}$ by \eqref{eq:3.13}, 
it follows from 
$s^{(p-2)q_1} \chi_{[s \geq \tau]}(s) \leq C_\tau (s- \tau/2)_+^{(p-2)q_2}$ and $2 < q_2(p-2) < 2^\ast$ that 
	\[
		\begin{aligned}
			\left\| \lambda u_\lambda^{p-2} \chi_{ [u_\lambda \geq \tau] } \right\|_{L^{q_1} (\RN) }^{q_1} 
			&=
			\lambda^{q_1} \int_{ \RN} u_\lambda^{(p-2)q_1} \chi_{ [u_\lambda \geq \tau] } \, \rd x
			\\
			&\leq
			\lambda^{q_1} \int_{ \RN} C_\tau \left( u_\lambda - \frac{\tau}{2} \right)^{(p-2)q_2}_+ \, \rd x
			\\
			&\leq C_\tau \lambda^{q_1} C_0 \left\| \left( u_\lambda - \frac{\tau}{2} \right)_+ \right\|_{H^1(\RN)}^{(p-2)q_2} 
			\leq \wt{C}_\tau \lambda^{ q_1 - q_2 }.
		\end{aligned}
	\]
Hence, $\| h_\lambda \|_{L^{q_1} (\RN) } \to 0$ as $\lambda \to \infty$. 
By \eqref{eq:3.14}, $q_1 > N/2$ and Moser's iteration (\cite[Theorem 8.15]{GT01} or \cite[Theorem 4.1]{HL01}), 
there exists $C>0$, which is independent of $\lambda$, such that 
for any $z \in \RN$, 
	\[
		\| \phi_\lambda \|_{L^\infty(B_1(z))} \leq 
		C \left\{ \left\| \phi_\lambda \right\|_{L^2(B_2(z))} + \left\| \tau h_\lambda \right\|_{L^{q_1} (B_2(z)) } \right\}.
	\]
Since $\| \phi_\lambda \|_{L^2(\RN)} \leq M \wt{C}_{\tau/2} \lambda^{-1/(p-2)}$, we obtain 
	\[
		\limsup_{\lambda \to \infty} \| \phi_\lambda \|_{L^\infty(\RN)} 
		= \limsup_{\lambda \to \infty} \sup_{z \in \RN} \| \phi_\lambda \|_{L^\infty(B_1(z))} = 0.
	\]
Recalling that $\tau \in (0,s_0)$ is arbitrary, we have $\| u_\lambda \|_{L^\infty(\RN)} \to 0$ as $\lambda \to \infty$. 
	\end{proof}

Now we prove \cref{theorem:1.1} (i).
	
\begin{proof}[Proof of \cref{theorem:1.1} (i)]
Let $(u_\lambda)_{\lambda > 0}$ be a family of positive solutions of \eqref{eq:3.1} obtained in \cref{proposition:3.4}. 
By \cref{lemma:3.5} and \cref{proposition:3.6}, 
we have $\| u_\lambda \|_{L^\infty(\RN)} \to 0$ as $\lambda \to \infty$. 
Hence, we can select $\lambda_0 > 0$ such that 
$\| u_\lambda \|_{L^\infty(\RN)} \leq s_0$ if $\lambda \geq \lambda_0$. 
By \eqref{eq:2.4}, one finds that $g_{m_0} (u_\lambda) = f(u_\lambda)$ and 
$u_\lambda$ is a positive solution of \eqref{Plam}. 
Therefore, under (i) \cref{theorem:1.1} holds. 
\end{proof}

\subsection{Proof of \cref{theorem:1.1} under (ii)}
\label{section:3.2}

	Here we give a sketch of proof of \cref{theorem:1.1} under (ii). 
Assume that (V1), (V4) and (f1)--(f4). 
In this case, we define $H_V$ by 
	\[
		\begin{aligned}
			H_V &:= \Set{ u \in H^1(\RN) | \int_{ \RN} V(x) u^2 \, \rd x < \infty }, 
			\\
			\la u , v \ra_{H_V} 
			&:= \int_{ \RN} \nabla u \cdot \nabla v + V(x) u v \, \rd x, \quad 
			\| u \|_{H_V} := \sqrt{ \la u, u \ra_{H_V} }.
		\end{aligned}
	\]
Then $(H_V, \la \cdot, \cdot\ra_{H_V} )$ is a Hilbert space. 
Moreover, for $I_\lambda$ in \eqref{eq:3.2}, we may verify $I_\lambda \in C^1(H_V,\R)$ and 
critical points of $I_\lambda$ in $H_V$ are solutions of \eqref{eq:3.1} in $H_V$.

	As in \cref{proposition:3.1}, it is not difficult to prove that $I_\lambda$ admits a mountain pass geometry in $H_V$. 
Therefore, we may define the mountain pass value of $I_\lambda$ in $H_V$ by: 
	\[
		\begin{aligned}
			c_{V,\lambda} &:= \inf_{\gamma \in \Gamma_{V,\lambda}} 
			\max_{0 \leq t \leq 1} I_\lambda \left( \gamma (t) \right) \in (0,\infty),
			\\
			\Gamma_{V,\lambda} &:= 
			\Set{ \gamma \in C \left( [0,1] , H_V \right) | 
			\gamma (0) = 0, \ I_{\lambda} (\gamma (1)) < 0
			 }.
		\end{aligned}
	\]
In addition, we may find a Palais--Smale--Cerami sequence of $(u_{\lambda,n})_n \subset H_V$ at level $c_{V,\lambda}$, that is, 
	\[
		I_{\lambda} (u_{\lambda,n}) \to c_{V,\lambda}, \quad 
		\left( 1 + \| u_{\lambda,n} \|_{H_V} \right) \left\| I_\lambda'(u_{\lambda,n}) \right\|_{H_V^\ast} \to 0
		\quad \text{as $n \to \infty$}. 
	\]
Then the argument of \cref{proposition:3.2} is still valid in this case and 
we find that $(u_{\lambda,n})_n$ is bounded in $H_V$. 
Without loss of generality, we may suppose that 
$u_{\lambda,n} \rightharpoonup u_\lambda$ weakly in $H_V$ 
and $u_{\lambda,n} (x) \to u_\lambda (x)$ a.e. $x \in \RN$.

	Next, we recall the compactness embedding of $H_V$ into $L^q(\RN)$ with $2 \leq q < 2^\ast$, 
which was proved in \cite[Lemma 3.1]{BaWaWi05}. 
	\begin{lemma}[]\label{lemma:3.7}
		Under {\rm (V1)} and {\rm (V4)}, the embedding $H_V \subset L^q(\RN)$ is compact 
		for all $q \in [2,2^\ast)$. 
	\end{lemma}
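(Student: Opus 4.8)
The plan is to follow the standard argument for compact embeddings under a coercivity-type condition on $V$, as in \cite[Lemma 3.1]{BaWaWi05}. By interpolation between $L^2$ and $L^{2^\ast-\eps}$ it suffices to prove compactness of $H_V \hookrightarrow L^q(\RN)$ for a single $q \in (2,2^\ast)$; combined with the (non-compact but continuous) embedding $H^1(\RN) \hookrightarrow L^{2^\ast}(\RN)$ this upgrades to all $q \in [2, 2^\ast)$, and the case $q=2$ follows similarly once one controls the mass at infinity. So fix $q \in (2,2^\ast)$ and let $(u_n)_n \subset H_V$ be bounded, $\|u_n\|_{H_V} \le C$; after passing to a subsequence, $u_n \rightharpoonup u$ weakly in $H_V$ and in $H^1(\RN)$, and by Rellich's theorem $u_n \to u$ strongly in $L^q_{\mathrm{loc}}(\RN)$. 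We may assume $u=0$ by replacing $u_n$ with $u_n - u$. The goal is then $\|u_n\|_{L^q(\RN)} \to 0$.

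First I would reduce to controlling the tails: for $R>0$ write $\|u_n\|_{L^q(\RN)}^q = \|u_n\|_{L^q(B_R)}^q + \|u_n\|_{L^q(\RN \setminus B_R)}^q$; the first term tends to $0$ by local compactness, so everything hinges on making the second term small uniformly in $n$ by choosing $R$ large. Here is where (V4) enters. The idea is a covering argument: tile $\RN \setminus B_R$ by balls $B_{r_0}(y)$ with $|y|$ large, and on each such ball split the mass into the part where $V \le M$ and the part where $V > M$. On the set $\{V > M\}$ one has $\int_{B_{r_0}(y) \cap \{V>M\}} u_n^2 \le \frac{1}{M}\int V u_n^2$, which is $\le C^2/M$ and hence small for $M$ large. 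On the set $\{V \le M\}$, which by (V4) has Lebesgue measure tending to $0$ as $|y|\to\infty$, I would use Hölder's inequality with the Sobolev exponent: $\|u_n\|_{L^q(E)}^q \le \|u_n\|_{L^{2^\ast}(B_{r_0}(y))}^q \cdot \cL^N(E)^{1 - q/2^\ast}$ where $E = B_{r_0}(y) \cap \{V \le M\}$, and by (V4) (with this fixed $M$) the factor $\cL^N(E)^{1-q/2^\ast}$ is uniformly small once $|y|$ is large. A local Gagliardo–Nirenberg/Sobolev inequality on $B_{r_0}(y)$ then interpolates $\|u_n\|_{L^q(B_{r_0}(y))}$ between $\|u_n\|_{L^2(B_{r_0}(y))}$ (small, from the $\{V>M\}$ estimate plus the $\{V \le M\}$ estimate) and $\|\nabla u_n\|_{L^2} + \|u_n\|_{L^2}$ (bounded), and summing over the bounded-overlap cover of $\RN \setminus B_R$ yields $\|u_n\|_{L^q(\RN \setminus B_R)}^q \le \eps$ for $R$ large, uniformly in $n$.

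The main obstacle I anticipate is organizing the two-parameter limit carefully: the threshold $M$ must be chosen first (large, depending on $\eps$ and the uniform bound $C$) to kill the high-potential part, and only then is $R$ chosen large (depending on $M$ and $\eps$) so that (V4) makes the low-potential part small on every ball in the tail; one must ensure the constants in the local Sobolev inequality on $B_{r_0}(y)$ are independent of $y$ (true, by translation invariance) and that the overlap number of the cover of $\RN \setminus B_R$ by $r_0$-balls is bounded independently of $R$. Once the tail estimate $\sup_n \|u_n\|_{L^q(\RN\setminus B_R)} \to 0$ as $R \to \infty$ is in hand, combining with $\|u_n\|_{L^q(B_R)} \to 0$ gives $u_n \to 0$ in $L^q(\RN)$, proving compactness for this $q$; the interpolation step then extends it to all $q \in [2,2^\ast)$, which completes the proof.
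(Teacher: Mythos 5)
Your argument is correct and is essentially the proof of \cite[Lemma 3.1]{BaWaWi05}, which is all the paper itself offers for \cref{lemma:3.7} (the lemma is stated with a citation and no proof): the covering of $\RN \setminus B_R$ by $r_0$-balls with bounded overlap, the splitting of each ball along the sublevel set $\{ V \leq M \}$, the estimate $\int_{\{V > M\}} u_n^2 \leq M^{-1} \int V u_n^2$, H\"older against the small measure furnished by (V4), and the order of limits (first $M$, then $R$) are exactly the standard ingredients. The only cosmetic point is that your opening interpolation reduction runs in the wrong direction --- compactness for a single $q \in (2,2^\ast)$ does not by itself yield the case $q=2$ --- but this is harmless, since your tail estimate already controls the $L^2$ mass at infinity; the clean organization is to conclude $u_n \to 0$ in $L^2(\RN)$ first and then interpolate with the bounded $L^{2^\ast}(\RN)$ norm to obtain all $q \in [2,2^\ast)$.
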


	Exploiting \cref{lemma:3.7}, (f1), (f2) and \cref{lemma:2.1}, we may show that 
$u_{\lambda,n} \to u_\lambda$ strongly in $L^q(\RN)$ for each $q \in [2,2^\ast)$ and 
$g_{m_0} (u_{\lambda,n}) \to g_{m_0} (u_\lambda)$ strongly in $H_V^\ast$. 
By $I_\lambda'(u_{\lambda,n}) \to 0$ strongly in $H_V^\ast$ and 
the isomorphism between $H_V$ and $H_V^\ast$, we observe that 
$u_{\lambda,n} \to u_\lambda$ strongly in $H_V$.

	Finally, we show $\| u_\lambda \|_{L^\infty(\RN)} \to 0$ as $\lambda \to \infty$. 
For \cref{lemma:3.5}, considering $K_0$ on $H_V$ leads to $c_{V,\lambda} \leq C \lambda^{-2/(p-2)}$ 
for each $\lambda > 0$. 
Furthermore, the argument in \cref{proposition:3.6} also works by replacing the $H^1(\RN)$-norm 
by the $H_V$-norm. 
Thus, $\| u_\lambda \|_{L^\infty(\RN)} \to 0$ as $\lambda \to \infty$ holds and 
\cref{theorem:1.1} follows under (ii).

\subsection{The existence of positive solution of \eqref{eq:1.1} under (f1)--(f4)}
\label{section:3.3}

	When $\Omega \subset \RN$ is a bounded domain, we suppose  $V (x) \in C(\ov{\Omega})$ and $V(x) \geq 0$, 
and consider the functional 
	\[
		\wt{I}_\lambda (u) := \int_{\Omega} \frac{1}{2} |\nabla u|^2 + V(x) u^2 \, \rd x - \lambda \int_{\Omega} G_{m_0} (u) \, \rd x 
		\in C^1( H^1_0(\Omega) , \R).
	\]
We equip $H_0^1(\Omega)$ with the norm $\| \nabla \cdot \|_{L^2(\Omega)}$. 
Then it is easily seen that $\wt{I}_\lambda$ has a mountain pass geometry on $H^1_0(\Omega)$ and 
the mountain pass value is well-defined:
	\[
		\begin{aligned}
			c_{\Omega,\lambda} &:= \inf_{\gamma \in \Gamma_{\Omega,\lambda}} 
			\max_{0 \leq t \leq 1} \wt{I}_\lambda \left( \gamma (t) \right) \in (0,\infty),
			\\
			\Gamma_{\Omega,\lambda} &:= 
			\Set{ \gamma \in C \left( [0,1] , H^1_0(\Omega) \right) | 
			\gamma (0) = 0, \ \wt{I}_{\lambda} (\gamma (1)) < 0
			 }.
		\end{aligned}
	\]
Thus, there exists a Palais--Smale--Cerami sequence $(u_{\lambda,n})_n$ at level $c_{\Omega,\lambda}$ and 
as in the above we may show $\| u_{\lambda,n} - u_\lambda \|_{H^1_0(\Omega)} \to 0$ as $n \to \infty$ 
after taking a subsequence. 
\cref{lemma:3.5} remains true in this case and we obtain $c_{\Omega, \lambda} \leq C \lambda^{-2/(p-2)}$. 
Finally, for the assertion $\| u_\lambda \|_{L^\infty(\Omega)} \to 0$ as $\lambda \to \infty$, we remark that 
$\phi_\lambda (x) := (u_\lambda - \tau)_+$ with $0< \tau < s_0$ satisfies 
	\[
		-\Delta \phi_\lambda \leq h_\lambda \phi_\lambda + \tau h_\lambda \quad \text{in} \ \Omega, 
		\quad 
		h_\lambda (x) := \lambda \frac{g_{m_0} (u_\lambda (x)) }{u_\lambda (x)} \chi_{ [u_\lambda \geq \tau] } (x).
	\]
As in the previous case, we can show 
$\| h_{\lambda} \|_{L^{q_1} (\RN) } \to 0$ as $\lambda \to \infty$ for some $q_1 > N/2$. 
Since $\phi_\lambda \in H^1_0(\Omega)$, we may apply Moser's iteration without cut-off function in $x$ and get 
	\[
		\| \phi_\lambda \|_{L^\infty(\Omega)} \leq C \left\{ \| \phi_\lambda \|_{L^2(\Omega)} + \| \tau h_\lambda \|_{L^{q_1} (\Omega) } \right\},
	\]
where $C>0$ is independent of $\lambda$. 
Therefore, we have $\| u_\lambda \|_{L^\infty(\Omega)} \to 0$ as $\lambda \to \infty$ 
and $u_\lambda$ becomes a solution of \eqref{Plam} when $\lambda$ is sufficiently large.

\section{Asymptotic behavior of positiver solutions as $\lambda \to \infty$: proof of \cref{theorem:1.3}}
\label{section:4}

In this section, we aim to prove \cref{theorem:1.3}. 
As in \cref{section:3}, we mainly consider the case (V1)--(V3), (f1) and (f5) hold 
since other cases can be shown similarly. 
Recalling that (f1) and (f5) imply (f2)--(f4), 
we notice that the mountain pass values $c_\lambda$ and $d_\lambda$ in \eqref{eq:3.4} are 
well-defined and they are critical values of $I_\lambda$ and $J_\lambda$ respectively. 
Let $u_\lambda$ be a critical point of $I_\lambda$ corresponding to $c_\lambda$ 
and $\omega_0$ be the unique positive radial solution of \eqref{eq:1.3}.  
We choose $T_0>0$ so that 
	\[
		J_{\infty, \infty} ( T_0 \omega_0 ) < 0, \quad \hbox{where} \ 
		J_{\infty, \infty} ( u ) := \frac{1}{2} \intN |\nabla u|^2 + V_\infty u^2 \,\rd x - \frac{1}{p} \intN u_+^p \, \rd x.
	\]
We also set 
	\[
		\gamma_\infty(t) := T_0 t \omega_0 \in C \big( [0,1] , H^1(\RN) \big), \quad 
		J_\infty (u) := \frac{1}{2} \intN |\nabla u|^2 + V(x) u^2 \,\rd x - \frac{1}{p} \intN u_+^p \, \rd x.
	\]

\begin{lemma}\label{lemma:4.1}
		Suppose {\rm (V1)--(V3)}, {\rm (f1)} and {\rm (f5)}, and set $v_\lambda (x) := \lambda^{1/(p-2)} u_\lambda (x)$. 
		Then it holds
			\[
				0 < \liminf_{\lambda \to \infty} \lambda^{2/(p-2)} c_\lambda \leq 
				\limsup_{\lambda \to \infty} \lambda^{2/(p-2)} c_\lambda 
				\leq \max_{ 0 \leq t \leq 1} J_\infty ( \gamma_\infty (t) )
			\]
		and $(v_\lambda)_{\lambda \geq 1}$ is bounded in $H^1(\RN)$. 
	\end{lemma}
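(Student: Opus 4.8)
The plan is to unwind the scaling relation \eqref{eq:3.3}, which gives $J_\lambda(v_\lambda) = \lambda^{2/(p-2)} I_\lambda(u_\lambda) = \lambda^{2/(p-2)} c_\lambda = d_\lambda$, so it suffices to bound $d_\lambda$ from above and below. For the upper bound, I would test the mountain pass value $d_\lambda$ against the fixed path $\gamma_\infty(t) = T_0 t \omega_0$. This requires first checking that $\gamma_\infty \in \Gamma_{J_\lambda}$, i.e. $J_\lambda(T_0\omega_0) < 0$ for all large $\lambda$: since (f5) forces $G_{m_0}(s) \sim s^p/p$ as $s \to 0^+$ and the relevant argument $\lambda^{-1/(p-2)} T_0 t \omega_0$ tends to $0$ uniformly, the rescaled potential term $\lambda^{p/(p-2)} \intN G_{m_0}(\lambda^{-1/(p-2)} v)\,\rd x$ converges to $\tfrac1p \intN v_+^p\,\rd x$ for $v$ in the fixed compact set $\gamma_\infty([0,1])$; hence $J_\lambda(\gamma_\infty(t)) \to J_\infty(\gamma_\infty(t))$ uniformly in $t$. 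Using $V \le V_\infty$ (from (V3)) one checks $J_\infty(\gamma_\infty(t)) \le J_{\infty,\infty}(\gamma_\infty(t))$, and $J_{\infty,\infty}(T_0\omega_0) < 0$ by the choice of $T_0$; a standard computation shows $\max_{0\le t\le 1} J_{\infty,\infty}(T_0 t\omega_0) = J_{\infty,\infty}(\omega_0) > 0$ since $\omega_0$ is the least energy solution of \eqref{eq:1.3}, so in particular $J_\lambda(T_0\omega_0) \to J_{\infty,\infty}(T_0\omega_0) < 0$ for large $\lambda$. Therefore $\gamma_\infty \in \Gamma_{J_\lambda}$ eventually, and $d_\lambda \le \max_{0\le t\le 1} J_\lambda(\gamma_\infty(t)) \to \max_{0\le t\le 1} J_\infty(\gamma_\infty(t))$, which yields the $\limsup$ estimate $\limsup_\lambda \lambda^{2/(p-2)} c_\lambda \le \max_{0\le t\le1} J_\infty(\gamma_\infty(t))$.

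For the lower bound, I would reuse the comparison functional from \cref{lemma:3.5}: with $K_0(v) := \tfrac12\|v\|_{H^1}^2 - \wt c\, \intN v_+^p\,\rd x$ (using $G_{m_0}(s) \ge \wt c\, s^p$ from \eqref{eq:2.7}) one has $J_\lambda(v) \le K_0(v)$, hence $\Gamma_{\ov\Gamma_0}$-paths of $K_0$ lie in $\Gamma_{J_\lambda}$; but that gives an upper bound, not a lower one. For the lower bound I instead use that $J_\lambda$ retains a uniform mountain pass lower estimate: from the bound $g_{m_0}(s) \le \tfrac{V_0}{2\lambda}|s| + C|s|^{p-1}$ after scaling — more precisely, using \eqref{alignat:2.9}, $g_{m_0}(s) \le C_2 s^{p-1}$, so $\lambda^{p/(p-2)} G_{m_0}(\lambda^{-1/(p-2)}v) \le \tfrac{C_2}{p} v_+^p$ — one gets $J_\lambda(v) \ge \tfrac12\|v\|_{H^1}^2 - C\|v\|_{H^1}^p$ with a constant independent of $\lambda$ (for $N \ge 3$ one must be slightly careful because $g_{m_0}(s) = \xi_1(s) f(s)$ only has the cubic-type bound near $0$; but (f2) plus \eqref{alignat:2.9} handles the whole range, giving $G_{m_0}(s) \le C s^p$ globally, wait — $G_{m_0}$ grows like $s^p$ which after scaling is exactly homogeneous). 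Thus there exist $\rho > 0$, $c_* > 0$ independent of $\lambda$ with $J_\lambda(v) \ge c_*$ on $\|v\|_{H^1} = \rho$, whence $d_\lambda \ge c_* > 0$, i.e. $\liminf_\lambda \lambda^{2/(p-2)} c_\lambda \ge c_* > 0$.

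Finally, the boundedness of $(v_\lambda)_{\lambda \ge 1}$ in $H^1(\RN)$: since $v_\lambda = \lambda^{1/(p-2)} u_\lambda$ and $u_\lambda$ is a critical point of $I_\lambda$, $v_\lambda$ is a critical point of $J_\lambda$, so $J_\lambda(v_\lambda) = d_\lambda$ and $J_\lambda'(v_\lambda) = 0$. Using $J_\lambda(v_\lambda) - \tfrac12 J_\lambda'(v_\lambda) v_\lambda = \lambda^{p/(p-2)} \intN \bigl[\tfrac12 g_{m_0}(\lambda^{-1/(p-2)}v_\lambda)\lambda^{-1/(p-2)}v_\lambda - G_{m_0}(\lambda^{-1/(p-2)}v_\lambda)\bigr]\rd x \ge 0$ by \eqref{alignat:2.6}, together with $d_\lambda \le C$ from the upper bound, one controls the scaled nonlinear tail; combined with \eqref{alignat:2.5} this bounds $\lambda^{p/(p-2)} \int_{[\lambda^{-1/(p-2)} v_\lambda \ge \tau]} (\lambda^{-1/(p-2)} v_\lambda)^p = \int_{[v_\lambda \ge \lambda^{1/(p-2)}\tau]} v_\lambda^p$, and since $\|u_\lambda\|_{L^\infty} \to 0$ (from \cref{proposition:3.6}), for large $\lambda$ the superlevel sets are essentially everything relevant, or more directly one estimates $\|v_\lambda\|_{H^1}^2 = \lambda^{p/(p-2)}\intN g_{m_0}(\lambda^{-1/(p-2)}v_\lambda)\lambda^{-1/(p-2)} v_\lambda\,\rd x \le C_2 \intN v_\lambda^p\,\rd x$ via \eqref{alignat:2.9}, and plugging back into the energy identity $d_\lambda = J_\lambda(v_\lambda) \ge \tfrac12\|v_\lambda\|_{H^1}^2 - \tfrac{C_2}{p}\|v_\lambda\|_{L^p}^p$ — this alone is not closed, so I would instead argue via $d_\lambda = J_\lambda(v_\lambda) - \tfrac1p J_\lambda'(v_\lambda)v_\lambda \ge (\tfrac12 - \tfrac1p)\|v_\lambda\|_{H^1}^2 - \lambda^{p/(p-2)}\intN\bigl[\tfrac1p g_{m_0}(\cdot)(\cdot) - G_{m_0}(\cdot)\bigr]\rd x$ and use that by \eqref{alignat:2.8} (valid under (f5)) the last integrand is $\le 0$ on $(0,\infty)$, so $d_\lambda \ge (\tfrac12 - \tfrac1p)\|v_\lambda\|_{H^1}^2$, giving $\|v_\lambda\|_{H^1}^2 \le \tfrac{2p}{p-2} d_\lambda \le C$. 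This last AR-type inequality from \eqref{alignat:2.8} is the clean route and is the step I expect to be delicate only in that it needs (f5) (hence \eqref{alignat:2.8}), which is exactly the hypothesis of \cref{theorem:1.3}; the upper bound via the fixed test path is the main obstacle, as it requires the uniform convergence $J_\lambda(\gamma_\infty(t)) \to J_\infty(\gamma_\infty(t))$ and the identification of $\max_t J_{\infty,\infty}(T_0 t\omega_0)$ with the least energy level.
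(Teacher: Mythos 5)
Your proof follows essentially the same route as the paper's: the upper bound by testing $d_\lambda$ against the fixed path $\gamma_\infty(t)=T_0t\omega_0$, using the uniform convergence $J_\lambda(\gamma_\infty(t))\to J_\infty(\gamma_\infty(t))$ from (f5) together with $J_\infty\le J_{\infty,\infty}$ and $J_{\infty,\infty}(T_0\omega_0)<0$; the lower bound by a $\lambda$-independent comparison (the paper uses the fixed functional $\ud{J}(v)=\tfrac12\int_{\RN}|\nabla v|^2+V(x)v^2\,\rd x-\tfrac{C_2}{p}\int_{\RN}v_+^p\,\rd x\le J_\lambda(v)$ and its mountain pass level, while you use the equivalent uniform sphere estimate $J_\lambda\ge c_*$ on $\|v\|_{H^1(\RN)}=\rho$, both resting on \eqref{alignat:2.9}); and the boundedness of $(v_\lambda)$ via the Ambrosetti--Rabinowitz-type inequality \eqref{alignat:2.8}. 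Two small slips to fix. First, the limit of $J_\lambda(\gamma_\infty(1))$ is $J_\infty(\gamma_\infty(1))$, not $J_{\infty,\infty}(T_0\omega_0)$; the conclusion $J_\lambda(\gamma_\infty(1))<0$ for large $\lambda$ still follows since $J_\infty\le J_{\infty,\infty}$ by (V3). Second, \eqref{alignat:2.8} only gives $\mu G_{m_0}(s)\le g_{m_0}(s)s$ for some $\mu\in(2,p)$, not for $\mu=p$ (under (f5) one has $f(s)s-pF(s)=o(s^p)$, which may be negative), so the energy identity must be taken as $d_\lambda=J_\lambda(v_\lambda)-\tfrac1\mu J_\lambda'(v_\lambda)v_\lambda\ge\left(\tfrac12-\tfrac1\mu\right)\int_{\RN}|\nabla v_\lambda|^2+V(x)v_\lambda^2\,\rd x$; with $\tfrac1p$ in place of $\tfrac1\mu$ the sign of the remaining nonlinear term is not controlled. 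Your detour through $\|u_\lambda\|_{L^\infty}\to 0$ is unnecessary and rightly abandoned.
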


	\begin{proof}
To show $\dis 0 < \liminf_{\lambda \to \infty} \lambda^{2/(p-2)} c_\lambda$, 
let $C_2>0 $ be the constant in \eqref{alignat:2.9} and set 
	\[
		\ud{J} (v) := \frac{1}{2} \intN | \nabla v |^2 + V(x) v^2 \,\rd x - \frac{C_2}{p} \intN v_+^p \, \rd x.
	\]
Then we have 
	\begin{equation}\label{eq:4.1}
		\ud{J} (v) \leq J_\lambda (v) \quad \text{for each $v \in H^1(\RN)$ and $\lambda > 0$}.
	\end{equation}
It is easily seen that $\ud{J}$ has a mountain pass geometry and we set 
	\[
		\ud{d} := \inf_{\gamma \in \ud{\Gamma}} \max_{ 0 \leq t \leq 1} \ud{J}(\gamma(t)) \in (0,\infty), 
		\quad 
		\ud{\Gamma} := \Set{ \gamma \in C \big( [0,1] , H^1(\RN) \big) | \gamma (0) = 0, \ \ud{J} (\gamma (1)) < 0 }.
	\]
From \eqref{eq:4.1}, it follows that 
	\[
		\Gamma_{J_\lambda} \subset \ud{\Gamma}, \quad 
		0 < \ud{d} \leq d_\lambda \quad \text{for all $\lambda > 0$}.
	\]
Hence, \eqref{eq:3.5} yields $\ud{d} \leq c_\lambda \lambda^{2/(p-2)} $ and hence 
$\dis  0 < \liminf_{\lambda \to \infty} \lambda^{2/(p-2)} c_\lambda$ follows. 

Next, since $G_{m_0} (s) = F(s)$ on $[0,s_0]$, we notice that 
for sufficiently large $\lambda$ and all $ t \in [0,1]$,
		\[
			\begin{aligned}
				J_\lambda ( \gamma_\infty(t) ) 
				&= 
				\frac{T_0^2t^2}{2} \intN |\nabla \omega_0 |^2 + V(x) \omega_0^2 \,\rd x 
				-\lambda^{p/(p-2)} \intN G_{m_0} \left( \lambda^{-1/(p-2)} T_0 t \omega_0 (x) \right) \rd x 
				\\
				&= 
				\frac{T_0^2t^2}{2} \intN |\nabla \omega_0 |^2 + V(x) \omega_0^2 \,\rd x
				- \intN \frac{ F \left( \lambda^{-1/(p-2)  } T_0 t \omega_0 (x) \right) }
				{ \left( \lambda^{-1/(p-2)} T_0 t \omega_0 (x) \right)^p } \left( T_0 t \omega_0 (x) \right)^p \rd x.
			\end{aligned}
		\]
	Thus by (f5), one finds that
		\[
			J_\lambda(\gamma_\infty (t) ) \to J_\infty(\gamma_\infty (t)) 
			\ \hbox{as $\lambda \to \infty$} \ \text{uniformly with respect to $t \in [0,1]$}.
		\]
Due to $J_\infty(u) \leq J_{\infty,\infty} (u)$ and $J_{\infty,\infty} (T_0 \omega_0) < 0$, we deduce that
		\[
			J_\lambda (\gamma_\infty (1)) < 0 \quad 
			\text{for sufficiently large $\lambda$}, \quad 
			\lim_{\lambda \to \infty} \max_{ 0 \leq t \leq 1} J_\lambda (\gamma_\infty (t)) 
			= \max_{0 \leq t \leq 1} J_{\infty} \left( \gamma_\infty (t) \right).
		\]
Hence, it follows that $\gamma_\infty \in \Gamma_{J_\lambda}$ for sufficiently large $\lambda$ and 
	\[
		\limsup_{\lambda \to \infty} d_\lambda \leq \limsup_{\lambda \to \infty} \max_{0\leq t \leq 1} 
		J_\lambda ( \gamma_\infty (t) ) = \max_{ 0 \leq t \leq 1} J_\infty(\gamma_\infty (t)). 
	\]
Then from \eqref{eq:3.5}, 
$\dis \limsup_{\lambda \to \infty} \lambda^{2/(p-2)} c_\lambda \leq \max_{ 0 \leq t \leq 1} J_\infty(\gamma_\infty (t) )$ holds. 

For the boundedness of $(v_\lambda)$, let $\mu \in (2,p)$ be the constant in \eqref{alignat:2.8}. 
Then \eqref{alignat:2.8} gives 
	\[
		c_\lambda = I_\lambda (u_\lambda) = I_\lambda (u_\lambda) - \frac{1}{\mu} I_\lambda'(u_\lambda) u_\lambda 
		\geq \left( \frac{1}{2} - \frac{1}{\mu} \right) \int_{ \RN} |\nabla u_\lambda|^2 + V(x) u_\lambda^2 \, \rd x.
	\]
Multiplying $\lambda^{2/(p-2)}$ and noting that $( \lambda^{ 2/(p-2) }  c_\lambda)$ is bounded, 
we obtain the desired result. 
	\end{proof}

Now we are ready to prove \cref{theorem:1.3}. 
	
\begin{proof}[Proof of \cref{theorem:1.3}]
Let $(\lambda_n)_n \subset (0,\infty)$ be any sequence satisfying $\lambda_n \to \infty$ 
and set $v_n(x) := \lambda_n^{1/(p-2)} u_{\lambda_n} (x)$.
By \eqref{eq:3.3}, one finds that
	\begin{equation}\label{eq:4.2}
		J_{\lambda_n} (v_n) = \lambda_n^{2/(p-2)} I_{\lambda_n} \left( \lambda_n^{-1/(p-2)} v_n  \right) 
		= \lambda_n^{2/(p-2)} c_{\lambda_n}, \quad J_{\lambda_n}'(v_n) = 0.
	\end{equation}
Notice that in view of \eqref{alignat:2.9},
	\[
		- \Delta v_n + V(x) v_n = \lambda_n^{(p-1)/(p-2)} g_{m_0} \left( \lambda_n^{-1/(p-2)} v_n \right) 
		\leq C_2 v_n^{p-1} \quad \text{in} \ \RN.
	\]
Since $(v_n)_n$ is bounded in $H^1(\RN)$ thanks to \cref{lemma:4.1}, 
elliptic regularity theory implies that 
$(v_n)_n$ is bounded in $L^\infty(\RN)$, and hence in $W^{2,q}_{\rm loc} (\RN)$ for each $q \in (1,\infty)$. 

We first show the following claim:

\medskip

\noindent
\textbf{Claim:} 
\textsl{There exist $v_0 \in H^1(\RN)$, $k \geq 0$ and $(y_{1,n})_n,\dots,(y_{k,n})_n \subset \RN$ if $k \geq1$ such that 
	\begin{enumerate}
	\item[{\rm (i)}] 
	for each $i$, $|y_{i,n}| \to \infty$ and for each $i \neq j$, $|y_{i,n} - y_{j,n}| \to \infty$.
	\item[{\rm (ii)}]
	$v_n \rightharpoonup v_0 \geq 0$ and $v_n(x+y_{i,n}) \rightharpoonup \omega_0 \not \equiv 0$ 
	weakly in $H^1(\RN)$ where $\omega_0$ is the unique positive radial solution of \eqref{eq:1.3} and 
	$v_0$ satisfies 
		\[
			-\Delta v_0 + V(x) v_0 = v_0^{p-1} \quad \text{in} \ \RN.
		\]
	\item[{\rm (iii)}]	
	$\displaystyle \left\| v_n - v_0 - \sum_{i=1}^k \omega_0 ( \cdot - y_{i,n} ) \right\|_{H^1(\RN)} \to 0$ 
	and $\displaystyle \lim_{n \to \infty} J_{\lambda_n} (v_n) = J_\infty(v_0) + k J_{\infty,\infty} (\omega_0)$. 
	\end{enumerate}
}

\begin{proof}[Proof of the claim]
We recall that $v_n>0$ in $\RN$ and $(v_n)_n$ is bounded in $H^1(\RN) \cap L^\infty(\RN)$. 
Hence, for any $(z_n)_n \subset \RN$ with $\| v_n \|_{L^2(z_n+Q)} \to c > 0$ and $Q:= [0,1]^N$, 
$(v_n(\cdot + z_n))_n$ is bounded in $W^{2,q}_{\rm loc} (\RN)$ 
for each $q < \infty$. 
Let $v_n(x + z_n) \rightharpoonup \omega \geq 0$ weakly in $H^1(\RN)$ and $W^{2,q}_{\rm loc} (\RN)$. 
Then it holds that $ v_n(x+z_n) \to  \omega \not \equiv 0$ in $C_{\rm loc} (\RN)$ and hence (f5) yields 
	\[
		\begin{aligned}
			&\lambda_n^{ (p-1)/ (p-2) }  g_{m_0} \left( \lambda_n^{-1/(p-2)} v_n(x) \right) 
			= \frac{g_{m_0} \left( \lambda_n^{-1/(p-2)} v_n(x) \right)  }{\left( \lambda_n^{-1/(p-2)} v_n(x) \right) ^{p-1}} 
			v_n^{p-1} (x)
			\to \omega^{p-1} (x) \quad \text{in} \ C_{\rm loc} (\RN),
			\\
			& \lambda_n^{ p/(p-2) } G_{m_0} \left( \lambda_n^{-1/(p-2)} v_n(x) \right) 
			\to \frac{1}{p}\omega^{p} (x) \quad \text{in} \ C_{\rm loc} (\RN). 
		\end{aligned}
	\]
This implies that when $|z_n| \to \infty$, $\omega$ is a nontrivial, nonnegative solution of 
$-\Delta \omega + V_\infty \omega = \omega^{p-1}$ in $\RN$, and thus 
$\omega(x) = \omega_0 (x-z_0)$ for some $z_0 \in \RN$ thanks to the strong maximum principle and 
the uniqueness of positive solution up to translations (\cite{Kw89}). 
Replacing $z_n$ by $z_n + z_0$, we observe that 
$v_n(x+z_n) \rightharpoonup \omega_0$ weakly in $H^1(\RN)$ provided that $(z_n)_n$ is unbounded. 
Therefore, we can prove Claim similarly as \cref{lemma:3.3}. 
\end{proof}

Now we shall complete the proof. 
We first treat the case $V(x) \not \equiv V_\infty$. 
Let $t_0 \in (0,1)$ satisfy $\dis J_\infty(\gamma_\infty (t_0)) = \max_{0 \leq t \leq 1} J_\infty(\gamma_\infty (t))$. 
Since $\omega_0(x) > 0$ and $V(x) \not \equiv V_\infty$, \cref{lemma:4.1} gives 
	\[
		\begin{aligned}
			\limsup_{n \to \infty} J_{\lambda_n} (v_n) \leq \max_{ 0 \leq t \leq 1} J_\infty ( \gamma_\infty (t) ) 
			= J_\infty( \gamma_\infty (t_0) ) 
			&< J_{\infty, \infty} ( \gamma_\infty (t_0) ) 
			\\
			&\leq \max_{0 \leq t \leq 1} J_{\infty,\infty} (\gamma_\infty (t)) 
			= J_{\infty, \infty} (\omega_0).
		\end{aligned}
	\]
Noting that $J_{\infty,\infty} (\omega_0 ) > 0$ and $J_\infty(v) \geq 0$ provided that $J_\infty'(v) = 0$, 
we see from Claim that 
$k = 0$ and $\| v_n - v_0 \|_{H^1(\RN)} \to 0$. 
If $v_0 \equiv 0$, then \eqref{eq:4.2}, \cref{lemma:2.1} and \cref{lemma:4.1} lead to a contradiction as follows:
	\[
		0 < \liminf_{\lambda \to \infty} \lambda_n^{2/(p-2)} c_{\lambda_n}  
		= \liminf_{\lambda \to \infty} J_{\lambda_n} (v_n) 
		\leq \liminf_{\lambda \to \infty} 
		\int_{ \RN} \frac{1}{2} \left( |\nabla v_n|^2 + V(x) v_n^2 \right) - \frac{C_1}{p} v_n^p \, \rd x= 0.
	\]
Hence, $v_0 \not \equiv 0$ and we complete the proof in the case $V(x) \not \equiv V_{\infty}$. 

Finally, we assume that $V(x) \equiv V_\infty$. Then $J_{\infty} (v) = J_{\infty,\infty} (v)$ in this case. 
Hence, in Claim, it follows that either $v_0 \equiv 0$ or $v_0 = \omega_0$. 
In a similar way to the case $V(x) \not \equiv V_{\infty}$, 
we can prove that the case $k=0$ and $v_0=0$ does not occur. 
Therefore, either $k=0$, $v_0 = \omega_0$ or $k=1$, $v_0=0$, which implies 
$\| v_{n} (\cdot + z_n) - \omega_0 \|_{H^1(\RN)} \to 0$ for some $(z_n)_n \subset \RN$. 
Summarizing, we have proved that 
if $\lambda_n \to \infty$, then by taking a subsequence if necessary, 
$\| v_n (\cdot + z_n) - \omega_0 \|_{H^1(\RN)} \to 0$ for some $(z_n)_n \subset \RN$. 
Since $\omega_0$ is independent of choices of subsequences and 
$(\lambda_n)_n$ is arbitrary, we get 
$\| v_\lambda (\cdot + z_\lambda) - \omega_0 \|_{H^1(\RN)} \to 0$ as $\lambda \to \infty$ 
for some $(z_\lambda)_{\lambda \geq1} \subset \RN$. 
Hence, we complete the proof. 
	\end{proof}

	When we assume (V4) instead of (V2) and (V3), 
we first remark that 
$J_\infty \in C^1(H_V,\R)$ and 
it is not difficult to show the existence of critical point $w_0$ of $J_\infty$ 
corresponding to the mountain pass value of $J_\infty$. 
In this case, we use $\wt{\gamma}_\infty(t) := T_0 t w_0 \in C([0,1] , H_V)$ instead of $\gamma_\infty (t)$ 
where $T_0>1$ is chosen so that $J_\infty( \wt{\gamma}_\infty (1) ) < 0$, and we do not need $J_{\infty,\infty}$. 
Then we may prove \cref{lemma:4.1} and let $v_n \rightharpoonup v_\infty$ weakly in $H_V$ 
and $v_n(x) \to v_\infty(x)$ a.e. $x \in \RN$. 
Recalling \cref{lemma:3.7} and noting that $v_n$ satisfies 
$-\Delta v_n + V(x) v_n = \lambda_n^{(p-1)/(p-2)} g_{m_0} (u_{\lambda_n}  )$ in $\RN$ and 
$ \lambda_n^{ (p-1)/(p-2) }g_{m_0} \left( u_{\lambda_n}\right) \to v_\infty^{p-1}$ 
strongly in $H_V^\ast$, 
we observe that $v_n \to v_\infty$ strongly in $H_V$ and $v_\infty$ is a solution of 
$-\Delta v_\infty + V(x) v_\infty = v_\infty^{p-1}$ in $\RN$. 
Moreover, by $0< \lim_{n \to \infty} J_{\lambda_n} (v_n) = J_\infty(v_\infty)$, 
$v_\infty$ is nontrivial. 
Hence, \cref{theorem:1.3} also holds in this case.

	Finally, when $\Omega \subset \RN$ is bounded, 
the argument is similar to the above and we may verify that 
\cref{theorem:1.3} also holds in this case.

\section{$\mathcal{G}$-invariant case: proof of \cref{theorem:1.5}}\label{section:5}

In this section, we study \eqref{Plam} when $V(x)$ is not of trapping type. 
In this case, the main difficulty is to recover the compactness of bounded Palais-Smale sequences
because we are not able to argue as in \cref{proposition:3.4}.

	To overcome this difficulty, we assume that $\mathcal{G} \subset O(N)$ and $V(x)$ satisfy \eqref{g1} and (V5), 
and define 
\[
	H_\G^1(\R^N) := \{ u\in H^1(\R^N) \ | \ u(gx)=u(x) \ \hbox{for all} \ g\in \mathcal{G} \}.
\]
Notice that $I_\lambda$ is $\G$-invariant on $H^1(\RN)$, that is,
\[
I_{\lambda}(u(g \cdot )) =I_{\lambda}(u(\cdot)) \quad \hbox{for any} \ u\in H^1(\R^N) \ \hbox{and} \ g \in \G.
\]
By the principle of symmetric criticality due to Palais \cite{Pal} (see also \cite[Theorem 1.28]{Wi96}), 
we see that if the restriction $I_{\lambda}\vert_{H^1_\G(\R^N)}$ has a critical point, 
then it is in fact a critical point of $I_{\lambda}$. 
Thus it is sufficient to work in the restricted space $H^1_\G(\R^N)$.

	Next, due to the $\mathcal{G}$-symmetry, we have the following:

\begin{proposition} \label{proposition:5.1} 
Suppose that {\rm (V1)}, {\rm (V2)}, {\rm (V5)} and {\rm (f1)--(f4)} hold.
For $c< k_0 c_{\lambda,\infty}$ where $k_0 \geq 2$ is defined in \eqref{eq:1.4}, 
let $ (u_n)_n \subset H_\G^1(\R^N)$ be a sequence such that
\[
I_{\lambda}(u_n) \rightarrow c, \ I_{\lambda}'(u_n) \rightarrow 0 \ \hbox{in} \ (H_\G^1 (\RN) )^{\ast} 
\ \hbox{and} \ (\| u_n\|_{H^1(\RN)})_n \ \hbox{is bounded}.
\]
Then $(u_n)_n$ has a convergent subsequence in $H^1_{\G} (\RN)$.
\end{proposition}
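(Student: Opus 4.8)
The plan is to reduce to the concentration--compactness splitting of \cref{lemma:3.3} and then exploit the $\mathcal{G}$-symmetry together with the level restriction $c < k_0 c_{\lambda,\infty}$ to exclude any loss of compactness at infinity.

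First I would observe that $(u_n)_n$ is a bounded Palais--Smale sequence for $I_\lambda$ on the whole space $H^1(\RN)$, not merely on $H^1_\mathcal{G}(\RN)$. Indeed, writing $Pw := \tfrac{1}{|\mathcal{G}|}\sum_{g \in \mathcal{G}} w(g\cdot)$ for the averaging projection of $H^1(\RN)$ onto $H^1_\mathcal{G}(\RN)$, the $\mathcal{G}$-invariance of $I_\lambda$ (which holds by (V5)) gives $I_\lambda'(u)w = I_\lambda'(u)(Pw)$ for every $u \in H^1_\mathcal{G}(\RN)$ and $w \in H^1(\RN)$, so that $\|I_\lambda'(u_n)\|_{(H^1(\RN))^\ast} \leq \|P\|\,\|I_\lambda'(u_n)\|_{(H^1_\mathcal{G}(\RN))^\ast} \to 0$. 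Thus \cref{lemma:3.3} applies: after passing to a subsequence it produces $u_0 \in H^1(\RN)$, an integer $k \geq 0$, profiles $\omega_1,\dots,\omega_k$ and centers $(y_{i,n})_n$ enjoying (i)--(iii) there. In particular each $\omega_i$ solves \eqref{eq:3.8}, so $I_{\lambda,\infty}(\omega_i) \geq c_{\lambda,\infty}$ by \eqref{eq:3.10}, and $u_0$ solves \eqref{eq:3.1}, so $I_\lambda(u_0) = \lambda\int_{\RN}\big(\tfrac12 g_{m_0}(u_0)u_0 - G_{m_0}(u_0)\big)\,\rd x \geq 0$ by \eqref{alignat:2.6}.

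The heart of the matter is to prove $k = 0$; suppose instead $k \geq 1$. Passing to a further subsequence we may assume $y_{1,n}/|y_{1,n}| \to \xi \in S^{N-1}$, and by \eqref{g1} together with the definition \eqref{eq:1.4} the orbit $\{g\xi : g \in \mathcal{G}\}$ has at least $k_0$ elements; choose $g_1 = \mathrm{id}, g_2,\dots,g_{k_0} \in \mathcal{G}$ with $g_1\xi,\dots,g_{k_0}\xi$ pairwise distinct. For each $m$ the sequence $g_m y_{1,n}$ satisfies $|g_m y_{1,n}| = |y_{1,n}| \to \infty$ and, for $m \neq m'$, $|g_m y_{1,n} - g_{m'} y_{1,n}| = |y_{1,n}|\,|g_m(y_{1,n}/|y_{1,n}|) - g_{m'}(y_{1,n}/|y_{1,n}|)| \to \infty$ because $g_m\xi \neq g_{m'}\xi$. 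Moreover, from $u_n(g_m\,\cdot) = u_n(\cdot)$ and the orthogonal change of variables $x = g_m z$ one gets $u_n(\cdot + g_m y_{1,n}) \rightharpoonup \omega_1(g_m^{-1}\,\cdot) \not\equiv 0$ weakly in $H^1(\RN)$. If, for some $m$, $|g_m y_{1,n} - y_{i,n}| \to \infty$ for every $i \in \{1,\dots,k\}$, then \cref{lemma:3.3}(iii) and $|g_m y_{1,n}| \to \infty$ would force $u_n(\cdot + g_m y_{1,n}) \rightharpoonup 0$, a contradiction; hence each $g_m y_{1,n}$ stays within bounded distance of some $y_{i(m),n}$, and since the $g_m y_{1,n}$ are mutually divergent the indices $i(1),\dots,i(k_0)$ are pairwise distinct. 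Therefore $k \geq k_0$, and the energy identity of \cref{lemma:3.3}(iii), which here reads $c = I_\lambda(u_0) + \sum_{i=1}^{k} I_{\lambda,\infty}(\omega_i)$, gives
\[
	c \geq 0 + k\, c_{\lambda,\infty} \geq k_0\, c_{\lambda,\infty},
\]
contradicting $c < k_0 c_{\lambda,\infty}$. Hence $k = 0$, so $\|u_n - u_0\|_{H^1(\RN)} \to 0$ by \cref{lemma:3.3}(iii); since $H^1_\mathcal{G}(\RN)$ is closed in $H^1(\RN)$, we have $u_0 \in H^1_\mathcal{G}(\RN)$ and the convergence holds in $H^1_\mathcal{G}(\RN)$.

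I expect the bookkeeping in the third paragraph to be the main obstacle: one must confirm that the $\mathcal{G}$-orbit of a single escaping center $y_{1,n}$ genuinely contributes $k_0$ \emph{distinct} bubbles to the \cref{lemma:3.3} decomposition. This is exactly where compactness of $S^{N-1}$ and assumption \eqref{g1} — encoded in the definition \eqref{eq:1.4} of $k_0$ — are indispensable: they prevent two distinct orbit translates from being absorbed into the same profile $\omega_j$, which is precisely what makes the threshold $k_0 c_{\lambda,\infty}$ the correct one.
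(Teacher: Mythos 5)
Your proof is correct and takes essentially the same route as the paper: apply the splitting of \cref{lemma:3.3} to the (whole-space) bounded Palais--Smale sequence, use the $\mathcal{G}$-symmetry to show that $k \geq 1$ forces $k \geq k_0$, and contradict $c < k_0 c_{\lambda,\infty}$ via $I_\lambda(u_0) \geq 0$ and $I_{\lambda,\infty}(\omega_i) \geq c_{\lambda,\infty}$. The paper asserts the implication $k>0 \Rightarrow k \geq k_0$ without detail, and your third paragraph supplies exactly the standard justification (with the minor caveat that the boundedness of $|g_m y_{1,n} - y_{i(m),n}|$ holds only along a common subsequence, which is all that is needed).
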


	\begin{proof}
Let $(u_n)_n \subset H^1_{\mathcal{G}} (\RN)$ satisfy the assumption. 
We may find $u_0 \in H^1_{\mathcal{G}} (\RN)$, $k \geq 0$ and $\omega_j$ ($1 \leq j \leq k$) 
satisfying \cref{lemma:3.3}. 
From the $\mathcal{G}$-symmetry and \eqref{eq:1.4}, if $k > 0$, then $k \geq k_0$. 
Since we may show that $I_\lambda'(u) = 0$ implies $I_\lambda (u) \geq 0$, 
if $k > 0$, then 
	\[
		I_\lambda(u_n) \to c = I_\lambda(u_0) + \sum_{j=1}^{k} I_{\lambda,\infty} (\omega_j) 
		\geq k_0 c_{\lambda,\infty},
	\]
which is a contradiction. Hence, $k = 0$ and $(u_n)_n$ has a convergent subsequence in $H^1_{\G}(\RN)$. 
	\end{proof}

	To use \cref{proposition:5.1}, we next define the mountain pass value $c_{\lambda, \mathcal{G}}$ in $H^1_{\mathcal{G}} (\RN)$ by 
	\[
		\begin{aligned}
			c_{\lambda,\mathcal{G}} &:= \inf_{ \gamma \in \Gamma_{I_{\lambda}, \mathcal{G}} } 
			\max_{ 0 \leq t \leq 1} I_{\lambda} (\gamma(t)), 
			\\
			\Gamma_{I_{\lambda}, \mathcal{G}} &:= \Set{ \gamma \in C \left( [0,1], H_{\mathcal{G}}^1(\RN) \right)  |  
			\gamma(0)=0 \ \hbox{and} \ I_{\lambda}( \gamma(1)) <0 }. 
		\end{aligned}
	\]
Then, we may find a Palais--Smale--Cerami sequence $(u_{\lambda,n})$ at level $c_{\lambda, \mathcal{G}}$. 
Since $(u_{\lambda,n})_n$ is bounded in $H^1(\RN)$ due to \cref{proposition:3.2}, 
our next purpose is to show 
	\[
		c_{\lambda,\mathcal{G}} < k_0 c_{\lambda,\infty}.
	\]
To this end, we construct a suitable path $\gamma \in \Gamma_{I_{\lambda}, \mathcal{G}}$. 
Let $w_\lambda$ be a positive least energy solution of \eqref{eq:3.8} and 
recall $\{e_i\}_{1 \leq i \leq k_0}$ in \eqref{eq:1.5}. 
For $\ell \ge 1$ and $T>0$, our choice of the path is 
	\[
		\gamma_\ell (t) := t T \sum_{i=1}^{k_0} w_{\lambda}(x-\ell e_i) \in H^1_{\G} (\RN). 
	\]
Then arguing as in \cref{proposition:3.1}, 
one finds $T>0$ such that $\gamma_\ell \in \Gamma_{I_{\lambda}, \mathcal{G}}$ 
for all $ \ell \geq 1$.

\begin{proposition} \label{proposition:5.2} 
Suppose $N \ge 2$, \ef{g1}, {\rm (V1), (V2), (V5), (V6)}, {\rm (f1), (f2'), (f3), (f4)} and {\rm (f6)}.
Then for each $\lambda > 0$ there exists $\ell_0>0$ such that if $\ell \ge \ell_0$, 
\begin{equation} \label{eq:5.1}
	c_{\lambda, \mathcal{G}} \leq \max_{ 0 \leq t \leq 1} I_{\lambda} (\gamma_\ell(t)) 
	< k_0 c_{\lambda,\infty}.
\end{equation}
\end{proposition}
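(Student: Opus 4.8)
The plan is to estimate $\max_{0\le t\le 1}I_\lambda(\gamma_\ell(t))=\max_{0\le s\le T}I_\lambda(s\Phi_\ell)$, where $\Phi_\ell:=\sum_{i=1}^{k_0}w_i$ with $w_i:=w_\lambda(\cdot-\ell e_i)$ and $w_\lambda$ is the positive radial least energy solution of \eqref{eq:3.8}, and to show that this maximum falls strictly below $k_0c_{\lambda,\infty}$ once $\ell$ is large, by an interaction estimate. The main ingredient is the sharp decay of $w_\lambda$: one has $w_\lambda(x)\sim c\,|x|^{-(N-1)/2}e^{-\sqrt{V_\infty}|x|}$ as $|x|\to\infty$, and, since $g_{m_0}(s)=f(s)=o(s^{1+\nu})$ near $0$ by (f2'), $g_{m_0}(w_\lambda(x))$ decays strictly faster than $e^{-\sqrt{V_\infty}|x|}$. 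Hence the interaction quantity
\[
	\mathcal I_\ell:=\sum_{i\ne j}\lambda\int_{\RN}g_{m_0}(w_i)\,w_j\,\rd x
\]
is positive and of exact order $\ell^{-(N-1)/2}e^{-\alpha_0\sqrt{V_\infty}\,\ell}$ (it is dominated by the pairs at the minimal distance $\alpha_0\ell$), while by (V6) the errors coming from $V-V_\infty$,
\[
	\Big|\int_{\RN}(V-V_\infty)w_iw_j\,\rd x\Big|\le\kappa\int_{\RN}e^{-\alpha|x|}w_iw_j\,\rd x,\qquad\int_{\RN}(V-V_\infty)w_i^2\,\rd x\le\kappa\int_{\RN}e^{-\alpha|x|}w_i^2\,\rd x,
\]
are $o(\mathcal I_\ell)$ as $\ell\to\infty$: because $\alpha>\alpha_0\sqrt{V_\infty}$ and $\alpha_0\le2$, the weight $e^{-\alpha|x|}$ confines the mass to a bounded region where the integrands are of order at most $\ell^{-(N-1)}e^{-2\sqrt{V_\infty}\ell}$ (here $N\ge2$ is essential).

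Next I would expand $I_\lambda(s\Phi_\ell)$, uniformly for $s\in[0,T]$. From the equation $-\Delta w_i+V_\infty w_i=\lambda g_{m_0}(w_i)$ one has $\int_{\RN}\nabla w_i\cdot\nabla w_j+V_\infty w_iw_j\,\rd x=\lambda\int_{\RN}g_{m_0}(w_i)w_j\,\rd x$, and by localising near each centre $\ell e_i$ (where $w_i=O(1)$ and the other bumps are exponentially small),
\[
	\int_{\RN}G_{m_0}(s\Phi_\ell)\,\rd x-\sum_{i=1}^{k_0}\int_{\RN}G_{m_0}(sw_i)\,\rd x=s\sum_{i\ne j}\int_{\RN}g_{m_0}(sw_i)w_j\,\rd x+o(\mathcal I_\ell),
\]
where the error control is clean since $g_{m_0}$ is nondecreasing (so $G_{m_0}$ is convex) by \cref{lemma:2.1}\,(iv) under (f6). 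Combining this with $I_\lambda(sw_i)=I_{\lambda,\infty}(sw_\lambda)+\tfrac{s^2}{2}\int_{\RN}(V-V_\infty)w_i^2\,\rd x$ gives, uniformly for $s\in[0,T]$,
\[
	I_\lambda(s\Phi_\ell)=\sum_{i=1}^{k_0}I_{\lambda,\infty}(sw_\lambda)+\frac{s^2}{2}\sum_{i\ne j}\lambda\int_{\RN}g_{m_0}(w_i)w_j\,\rd x-s\sum_{i\ne j}\lambda\int_{\RN}g_{m_0}(sw_i)w_j\,\rd x+o(\mathcal I_\ell).
\]
At $s=1$ the two interaction sums telescope to $-\tfrac12\mathcal I_\ell$, and since $g_{m_0}$ is continuous and nondecreasing the combined interaction term remains $\le-\tfrac14\mathcal I_\ell$ for $s$ in a fixed neighbourhood $(1-\rho,1+\rho)$, uniformly for $\ell$ large.

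To conclude I would split $[0,T]$ by the size of $|s-1|$. By (f6) and \cref{lemma:2.1}\,(iv) the fibering map $s\mapsto I_{\lambda,\infty}(sw_\lambda)$ has a strict maximum at $s=1$ with value $c_{\lambda,\infty}$; hence for $|s-1|\ge\rho$ there is $\delta_0=\delta_0(\rho)>0$ with $\sum_iI_{\lambda,\infty}(sw_\lambda)\le k_0c_{\lambda,\infty}-k_0\delta_0$, and, since all interaction and $V$-terms are $o(1)$ as $\ell\to\infty$, $I_\lambda(s\Phi_\ell)<k_0c_{\lambda,\infty}$ there for $\ell$ large. For $|s-1|<\rho$ we use $I_{\lambda,\infty}(sw_\lambda)\le c_{\lambda,\infty}$ together with the bracket estimate to get $I_\lambda(s\Phi_\ell)\le k_0c_{\lambda,\infty}-\tfrac14\mathcal I_\ell+o(\mathcal I_\ell)<k_0c_{\lambda,\infty}$ for $\ell$ large. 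Taking $\ell_0$ so that both estimates hold for $\ell\ge\ell_0$ and maximising over $s\in[0,T]$ yields \eqref{eq:5.1}.

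The hard part will be the interaction estimate itself: establishing the sharp exponential–polynomial asymptotics of $w_\lambda$ and of the cross integrals $\int g_{m_0}(sw_i)w_j$ uniformly in $s\in[0,T]$, and checking that the $V$-error is $o(\mathcal I_\ell)$ even for antipodal pairs of centres (which can occur when $k_0=2$) — this is exactly where $N\ge2$ and $\alpha>\alpha_0\sqrt{V_\infty}$ are used. The fibering argument under (f6) and the splitting by $|s-1|$ are then routine.
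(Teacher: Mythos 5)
Your plan is sound and reaches the same conclusion from the same key inputs (the two-sided decay \eqref{eq:5.2} of $w_\lambda$, the Bahri--Li interaction integrals of \cref{lemma:5.4}, (f2') for superlinearity near $0$, (f6) for monotonicity, and (V6) with $\alpha>\alpha_0\sqrt{V_\infty}$ plus $N\ge 2$ to beat the antipodal case), but the mechanics differ from the paper's. The paper first shows, via \eqref{eq:5.3} and (f6), that $t\mapsto I_\lambda(\gamma_\ell(t))$ has a \emph{unique} maximizer $t_\ell$ with $t_\ell T\to 1$, so the estimate is only needed at that single parameter value; it then writes the nonlinear defect as $-L_1+L_2$, bounds $L_1$ from below by the \emph{one-sided} superadditivity inequality of \cref{lemma:5.3} (Hirata's lemma) together with \cref{lemma:5.4}(ii), and controls the scaling discrepancy $L_2$ by $\varepsilon$ times the interaction via the pointwise estimate \eqref{eq:5.6}. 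You instead prove a \emph{two-sided} asymptotic identity for the defect, $\int G_{m_0}(s\Phi_\ell)-\sum_i\int G_{m_0}(sw_i)=s\sum_{i\ne j}\int g_{m_0}(sw_i)w_j+o(\mathcal I_\ell)$ uniformly in $s$, and then dispose of the maximization by a dichotomy on $|s-1|$; this buys you freedom from the uniqueness-of-maximizer argument, at the cost of a stronger expansion. Be aware that your claim that the error control is ``clean since $G_{m_0}$ is convex'' is an overstatement: convexity only gives the lower bound $G_{m_0}(a+b)\ge G_{m_0}(a)+g_{m_0}(a)b$, i.e.\ exactly the one-sided inequality the paper uses; the matching upper bound, and the uniform-in-$\ell$ statement that $\sum_{i\ne j}\lambda\int g_{m_0}(sw_i)w_j\ge \tfrac78\,\mathcal I_\ell$ on a fixed neighbourhood of $s=1$, require splitting each integral into the core $\{w_i\ge\delta\}$ (uniform continuity of $g_{m_0}$ on compacts, with $\int_{B_{R_\delta}(\ell e_i)}w_j\asymp\mathcal I_\ell$) and the tail (where (f2') gives $g_{m_0}(t)\le\varepsilon_\delta t^{1+\nu}$), which is genuine work that the paper's one-sided route deliberately avoids. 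With those estimates carried out, your argument closes; also note that in your bound on $\int(V-V_\infty)w_iw_j$ the absolute value is not justified by (V6) (which is only an upper bound on $V$), but only the upper bound is needed, so this is harmless.
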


	To prove \cref{proposition:5.2}, some preliminaries are necessary. 
We first remark that there exist $0 < c_\lambda \leq C_\lambda < \infty$ such that 
	\begin{equation}\label{eq:5.2}
		c_\lambda \left( 1 + |x| \right)^{ -\frac{N-1}{2} } e^{ -\sqrt{V_\infty} |x| } 
		\leq w_\lambda (x) 
		\leq C_\lambda \left( 1 + |x| \right)^{ -\frac{N-1}{2} } e^{ -\sqrt{V_\infty} |x| }
		\quad \text{for every $x \in \RN$}.
	\end{equation}
In fact, by (f2') and $w_\lambda(x) \to 0$ as $|x| \to \infty$, 
we may prove that 
for all $\eta \in (0,\sqrt{V_\infty})$ there exists $C_\eta>0$ such that 
$w_\lambda (x) \leq C_\eta e^{ - \eta |x| }$ due to the comparison theorem 
(see the proof of \cref{proposition:6.1} below). 
Let $G$ be the Green function for $-\Delta + V_\infty$. Then 
	\[
		w_\lambda(x) = \int_{\RN} G(x-y) \lambda g_{m_0} \left( w_\lambda (y) \right) \rd y.
	\]
Using (f2') and arguing as in \cite[Proposition 4.1]{GNN81} or \cite[Proposition 1.2]{BL90}, we see that for some $C_\lambda>0$, 
	\[
		w_\lambda(x) \leq C_\lambda (1+|x|)^{-\frac{N-1}{2}} e^{ -\sqrt{V_\infty} |x| }.
	\]

	On the other hand, notice that $g_{m_0} (s) \geq 0$ and $-\Delta w_\lambda + V_\infty w_\lambda \geq 0$. 
Hence, by the comparison with $G$ on $\RN \setminus B_1(0)$, 
there exists $c_\lambda>0$ such that $c_\lambda G (x) \leq w_\lambda(x)$ for each $x \in \RN \setminus B_1(0)$. 
Therefore, \eqref{eq:5.2} holds.

	Next, for notational simplicity, we write $u_i(x)=w_{\lambda}(x-\ell e_i)$ for $i=1,\cdots,k_0$. 
By \cref{lemma:2.1} and (f6), 
$\frac{g_{m_0}(s)}{s}$ is nondecreasing on $(0,\infty)$. 
Therefore, we are able to apply the following lemma whose proof can be found in 
\cite[Lemma 4.4]{Hi08}.

\begin{lemma} \label{lemma:5.3}
Assume {\rm (f1), (f2'), (f6)} and
let $k$ be any integer with $k \ge 2$.

	\begin{enumerate}
	\item[{\rm (i)}] 
		If $u_i\in [0, \infty)$ for all $i=1,\cdots,k$, 
		then
		\[
		G_{m_0}\left(\sum_{i=1}^{k} u_i\right)-\sum_{i=1}^{k} G_{m_0}(u_i)
		-\frac{1}{2} \sum_{i,j=1,\ i\ne j}^{k} g_{m_0}(u_i)u_j\ge 0.
		\]
	\item[{\rm (ii)}]
		For any compact subset $K \subset (0,\infty)$, 
		there exist $C_K>0$ and $s_{K}>0$ such that 
		if $u_i\in K$ and $u_j\in(0,s_{K}]$ for each $j$ with $ j \neq i$, 
		then
		\begin{align*}
			G_{m_0}\left(\sum_{i=1}^{k} u_i\right)-
			&\sum_{i=1}^{k}G_{m_0}(u_i)-\frac{1}{2} \sum_{i,j=1,\ i\ne j}^{k} g_{m_0}(u_i)u_j 
			\geq C_K \sum_{j=1,\ j\not=i}^{k} u_j.
		\end{align*}
	\end{enumerate}
\end{lemma}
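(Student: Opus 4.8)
The plan is to deduce both parts from two structural facts about $g_{m_0}$ that are already available. By \cref{lemma:2.1}(iv) and (f6) the ratio $s\mapsto g_{m_0}(s)/s$ is nondecreasing on $(0,\infty)$, and since $g_{m_0}\equiv f$ near $0$, (f2') forces it to tend to $0$ as $s\to 0^+$; hence $g_{m_0}$ extends continuously to $[0,\infty)$ with $g_{m_0}(0)=0$, is nondecreasing there (a product of two nonnegative nondecreasing functions), and is \emph{superadditive}: since $g_{m_0}(a+b)/(a+b)\ge\max\{g_{m_0}(a)/a,\,g_{m_0}(b)/b\}$, one has
\[
	g_{m_0}(a+b)=a\,\frac{g_{m_0}(a+b)}{a+b}+b\,\frac{g_{m_0}(a+b)}{a+b}\ge g_{m_0}(a)+g_{m_0}(b)\qquad(a,b\ge 0),
\]
the degenerate cases being immediate. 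Moreover \eqref{alignat:2.6} gives $G_{m_0}(s)\le\tfrac12 g_{m_0}(s)s$, and monotonicity of $g_{m_0}$ gives $G_{m_0}(s)\le g_{m_0}(s)s$, for all $s\ge 0$.

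For (i) I would telescope along the partial sums $S_0:=0$, $S_m:=u_1+\cdots+u_m$, writing
\[
	G_{m_0}\Big(\sum_{i=1}^k u_i\Big)-\sum_{i=1}^k G_{m_0}(u_i)=\sum_{m=1}^k\int_0^{u_m}\big(g_{m_0}(S_{m-1}+s)-g_{m_0}(s)\big)\,\rd s.
\]
Superadditivity bounds the $m$-th integrand below by $g_{m_0}(S_{m-1})\ge\sum_{i=1}^{m-1}g_{m_0}(u_i)$, so the left-hand side is $\ge\sum_{1\le i<m\le k}g_{m_0}(u_i)u_m$. Since that left-hand side is symmetric in $u_1,\dots,u_k$, the same computation with the reversed ordering gives the bound $\sum_{1\le m<i\le k}g_{m_0}(u_i)u_m$ as well, and averaging the two yields exactly $\tfrac12\sum_{i\ne j}g_{m_0}(u_i)u_j$, which is (i).

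For (ii), relabel so the distinguished index is $1$, so $u_1\in K$ and $u_j\in(0,s_K]$ for $j\ge 2$, and run the same telescoping with $u_1$ placed first: the $m=1$ term vanishes, and since $S_{m-1}\ge u_1$ for $m\ge 2$ and $g_{m_0}$ is nondecreasing, the left-hand side of (ii) is
\[
	\ge g_{m_0}(u_1)\sum_{j\ge 2}u_j-\sum_{j\ge 2}G_{m_0}(u_j)-\tfrac12\sum_{l\ne m}g_{m_0}(u_l)u_m.
\]
Splitting off from the cross term the portion carrying $g_{m_0}(u_1)$, namely $\tfrac12 g_{m_0}(u_1)\sum_{j\ge2}u_j$, leaves $\tfrac12 g_{m_0}(u_1)\sum_{j\ge2}u_j-\sum_{j\ge2}G_{m_0}(u_j)-\tfrac12\sum_{l\ge2}g_{m_0}(u_l)\sum_{m\ne l}u_m$. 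Now $g_{m_0}(u_1)\ge b_K:=\min_K g_{m_0}>0$, so the first term dominates $\tfrac{b_K}{2}\sum_{j\ge2}u_j$; the remaining two, using $G_{m_0}(u_j)\le g_{m_0}(u_j)u_j$, $g_{m_0}(u_l)/u_l\to 0$ as $u_l\to0^+$, and $\sum_{m\ne l}u_m\le\max K+(k-2)s_K$, are together at most $\varepsilon(s_K)\sum_{j\ge2}u_j$ with $\varepsilon(s_K)\to 0$ as $s_K\to0^+$ (for fixed $k$ and $K$). Choosing $s_K$ with $\varepsilon(s_K)\le b_K/4$ gives (ii) with $C_K:=b_K/4$.

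The only genuinely delicate point is the bookkeeping in (ii): one must telescope with the ``large'' index $u_1$ first, so that the dangerous cross-term contribution $\tfrac12 g_{m_0}(u_1)\sum_{j\ge2}u_j$ is cancelled exactly by part of the telescoping gain, leaving behind only quantities that vanish with $s_K$; the rest is routine given the two monotonicity facts above. A detailed argument along these lines is carried out in \cite[Lemma 4.4]{Hi08}.
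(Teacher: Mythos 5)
Your proof is correct. Note that the paper does not actually prove this lemma: it records only the key fact that $s^{-1}g_{m_0}(s)$ is nondecreasing on $(0,\infty)$ (via \cref{lemma:2.1}(iv) and (f6)) and then cites \cite[Lemma 4.4]{Hi08} for the statement. What you supply is a self-contained argument built on exactly the two properties the citation relies on: monotonicity of $s^{-1}g_{m_0}(s)$ together with $s^{-1}g_{m_0}(s)\to 0$ as $s\to 0^+$ (from (f2')), which yield superadditivity and monotonicity of $g_{m_0}$ and the bounds $G_{m_0}(s)\le g_{m_0}(s)s$. The telescoping along partial sums plus the symmetrization trick gives (i) cleanly, and in (ii) the bookkeeping is right: the telescoping gain $g_{m_0}(u_1)\sum_{j\ge 2}u_j$ absorbs the $\tfrac12 g_{m_0}(u_1)\sum_{j\ge 2}u_j$ portion of the cross term, and the leftover terms are controlled by $\sup_{0<s\le s_K}s^{-1}g_{m_0}(s)\cdot(\max K+(k-2)s_K)\sum_{j\ge2}u_j$, which vanishes as $s_K\to 0^+$. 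One cosmetic remark: in (ii) your displayed lower bound retains the harmless term $-\sum_{j\ge2}G_{m_0}(u_j)$, which the superadditivity version of the telescoping would have removed; since $G_{m_0}\ge 0$ the stated inequality is valid either way, and you estimate that term correctly afterwards. The constant $b_K=\min_K g_{m_0}>0$ is justified because $g_{m_0}>0$ on $(0,\infty)$ by \eqref{eq:2.2} and the definition \eqref{eq:2.4}.
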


Next we use the following interaction estimates which can be shown 
from \eqref{eq:5.2} and \cite[Proposition 1.2]{BL90}
(see also \cite{Ada1, Hi07, Hi08, AW1}):

\begin{lemma} \label{lemma:5.4} 

Let $1 \leq i,j \leq k_0$ with $i \neq j$. 

	\begin{enumerate}
	\item[{\rm (i)}]
	For any $\nu>0$, there exists $C>0$ such that
		\[
			\intN u_i^{1+\nu} u_j \, \rd x 
			\le C e^{-\sqrt{V_{\infty}}|e_i-e_j|\ell} \ell^{-\frac{N-1}{2}}
			\quad \hbox{for all} \ \ell \ge 1.
		\]
	\item[{\rm (ii)}]
	There exist $C_1>0$ such that
		\[
			C_1 e^{-\sqrt{V_{\infty}}|e_i-e_j|\ell} \ell^{-\frac{N-1}{2}}
			\le \int_{B_1(\ell e_i)} u_j \, \rd x 
			\quad \hbox{for all} \ \ell \ge 1.
		\]
	\end{enumerate} 
\end{lemma}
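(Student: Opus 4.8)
The plan is to read both estimates off the two-sided pointwise bound \eqref{eq:5.2} for $w_\lambda$ (so that $u_i(x)=w_\lambda(x-\ell e_i)$ is squeezed between multiples of $(1+|x-\ell e_i|)^{-(N-1)/2}e^{-\sqrt{V_\infty}|x-\ell e_i|}$), together with the elementary inequality
\[
|x-\ell e_i| + |x-\ell e_j| \ge |\ell e_i - \ell e_j| = |e_i-e_j|\,\ell \ge \alpha_0\,\ell ,
\]
which controls the exponential part, while the polynomial prefactor $\ell^{-(N-1)/2}$ is extracted from whichever factor is evaluated far from its own centre. The sharp version of this asymptotics is precisely \cite[Proposition 1.2]{BL90}; here only a crude form is needed, so I would give the short self-contained argument.

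For (i), fix $\nu>0$ and split $\RN = A_\ell \cup A_\ell^{c}$ with $A_\ell := \Set{ x \in \RN | |x-\ell e_i| \le \tfrac12 |e_i-e_j|\,\ell }$. On $A_\ell$ one has $|x-\ell e_j| \ge \tfrac12|e_i-e_j|\,\ell \ge \tfrac12\alpha_0\,\ell$, so the upper bound in \eqref{eq:5.2} gives $u_j(x) \le C\,\ell^{-(N-1)/2} e^{-\sqrt{V_\infty}|x-\ell e_j|}$; multiplying by the upper bound for $u_i^{1+\nu}$ and using $e^{-\sqrt{V_\infty}(|x-\ell e_i|+|x-\ell e_j|)} \le e^{-\sqrt{V_\infty}|e_i-e_j|\,\ell}$, the integral over $A_\ell$ is bounded by
\[
C\,\ell^{-(N-1)/2} e^{-\sqrt{V_\infty}|e_i-e_j|\,\ell} \intN (1+|z|)^{-(1+\nu)(N-1)/2} e^{-\nu\sqrt{V_\infty}|z|}\,\rd z ,
\]
and the last integral is finite since $(1+|z|)^{-(1+\nu)(N-1)/2}\le 1$ and the exponential is integrable. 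On $A_\ell^{c}$ one has $|x-\ell e_i|>\tfrac12|e_i-e_j|\,\ell$, whence $(1+\nu)|x-\ell e_i| + |x-\ell e_j| \ge |e_i-e_j|\,\ell + \nu|x-\ell e_i|$ forces exponential decay at a rate strictly larger than $\sqrt{V_\infty}|e_i-e_j|\,\ell$, which dominates any power of $\ell$; hence the $A_\ell^{c}$-contribution is negligible compared with the claimed bound. This yields (i).

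For (ii), observe that for $x\in B_1(\ell e_i)$ we have $|x-\ell e_j| \le |e_i-e_j|\,\ell + 1$, so the lower bound in \eqref{eq:5.2} gives
\[
u_j(x) = w_\lambda(x-\ell e_j) \ge c_\lambda\,(2+|e_i-e_j|\,\ell)^{-(N-1)/2}\, e^{-\sqrt{V_\infty}(|e_i-e_j|\,\ell+1)}
\]
uniformly on $B_1(\ell e_i)$; integrating over $B_1(\ell e_i)$, whose Lebesgue measure is a fixed positive constant, and using $|e_i-e_j|\ge\alpha_0>0$ to absorb the additive $2$ into a constant multiple of $\ell^{-(N-1)/2}$ for $\ell\ge1$, one obtains (ii). The only genuine difficulty throughout is to keep the polynomial prefactor $\ell^{-(N-1)/2}$ rather than settling for bare exponential decay: in (i) this is delivered by the region splitting (and, in full generality, by \cite[Proposition 1.2]{BL90}), while in (ii) it is already encoded in the sharp lower estimate of \eqref{eq:5.2}.
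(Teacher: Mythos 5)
Your argument is correct and is essentially the route the paper intends: the paper gives no proof of \cref{lemma:5.4} beyond pointing to the two-sided decay estimate \eqref{eq:5.2} and \cite[Proposition 1.2]{BL90}, and your region-splitting computation is precisely the elementary derivation of both bounds from \eqref{eq:5.2} (with the sharp convolution asymptotics of \cite{BL90} replaced by the crude but sufficient triangle-inequality estimates). The only point worth stating slightly more explicitly on $A_\ell^{c}$ is that the retained term $\nu|x-\ell e_i|$ serves double duty, supplying both the integrability in $x$ and the surplus exponential factor $e^{-\frac{\nu}{2}\sqrt{V_\infty}|e_i-e_j|\ell}$ that absorbs $\ell^{(N-1)/2}$; your inequality already contains this, so the proof is complete.
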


	\begin{remark}\label{remark:5.5}
		Since $w_{\lambda}$ is a least energy solution of \ef{eq:3.8} and the nonlinear term in \ef{eq:3.8} depends on $\lambda$,
		constants in \cref{lemma:5.4} may also depend on $\lambda$. 
		When $N \geq 2$, this fact does not affect the proof of \cref{proposition:5.2},
		however, when $N=1$, this causes a serious problem as we will observe in the end of this section.
	\end{remark}

Under these preparations, we are ready to prove \cref{proposition:5.2}.
Although its proof is essentially same as in \cite{Ada1, Hi07, AW1},
we give the proof for the sake of completeness.

\begin{proof}[Proof of \cref{proposition:5.2}]
Let $\lambda>0$ be arbitrarily fixed. 
For every $s \in (0,\infty)$ and $v \in H^1_{\mathcal{G}} (\RN)$ with $v > 0$ in $\RN$, notice that 
	\begin{equation}\label{eq:5.3}
		\frac{I_\lambda'(s v) v}{s} = \| \nabla v \|_2^2 + \int_{\RN} V(x) v^2 \, \rd x 
		- \lambda \int_{\RN} \frac{g_{m_0} (sv) }{sv} v^2 \, \rd x
	\end{equation}
Thus, by $I_\lambda ( \gamma_\ell (1) ) < 0$ and the fact that $\frac{g_{m_0} (s) }{s}$ is nondecreasing in $(0,\infty)$, 
the function $[0,1] \ni t \mapsto I_\lambda (\gamma_\ell (t)) \in \R$ attains its maximum on $[t_{\ell,1}, t_{\ell,2}] \subset (0,1)$.  
In addition, by \eqref{eq:5.3} and $s^{-1} g_{m_0} (s) \to 0$ as $s \to 0^+$ thanks to (f2'), we have 
$t_{\ell,1} = t_{\ell,2} =:t_\ell $ and 
the maximum point is unique. This holds true for $[0,1] \ni t \mapsto I_{\lambda,\infty} (T t w_\lambda )$ 
and the maximum is attained only at $t=1/T$ since $I_{\lambda,\infty}'(w_\lambda) = 0$. 
From $I_\lambda (\gamma_\ell (t)) \to k_0 I_{\lambda,\infty} (tTw_\lambda)$ uniformly in $t \in [0,1]$ as $\ell \to \infty$, we see that 
	\[
		\wt{t}_{\ell} := t_{\ell} T \to 1 \quad \text{as $\ell \to \infty$}.
	\]
Therefore, in order to prove \ef{eq:5.1}, it suffices to show that 
for sufficiently large $\ell \ge 1$,
	\[
		I_{\lambda}(\gamma_\ell(t_{\ell}))< k_0 c_{\lambda, \infty}.
	\]
We divide its proof into 4 steps.

\medskip
\noindent
{\bf Step 1}: [Decomposition of the energy]. 

\medskip

	By direct calculations, we find that
	\[
	\begin{aligned}
	& I_{\lambda}(\gamma_\ell(t_{\ell})) 
	\\
	= \ & I_{\lambda, \infty} \left( \wt{t}_{\ell} \sum_{i=1}^{k_0} u_i \right)
	+\frac{\wt{t}_{\ell}^2}{2} \intN (V(x)-V_{\infty}) \left( \sum_{i=1}^{k_0} u_i \right)^2 \, \rd x \\
	= \ & \sum_{i=1}^{k_0} I_{\lambda, \infty} \left( \wt{t}_{\ell} u_i \right) 
	+\frac{\wt{t}_{\ell}^2}{2} \intN (V(x)-V_{\infty}) \left( \sum_{i=1}^{k_0} u_i \right)^2 \,\rd x 
	\\
	&
	+ \frac{\wt{t}_{\ell}^2}{2} \sum_{i,j=1, i \ne j}^{k_0} 
	\intN \nabla u_i \cdot \nabla u_j + V_{\infty} u_i u_j \, \rd x 
	- \lambda \intN \left\{ G_{m_0} \left( \wt{t}_{\ell} \sum_{i=1}^{k_0} u_i \right)
	-\sum_{i=1}^{k_0} G_{m_0} \left( \wt{t}_{\ell}  u_i \right) \right\} \rd x.
	\end{aligned}
	\]
Since $u_i(x)=w_{\lambda}(x- \ell e_i)$ are critical points of $I_{\lambda, \infty}$, it follows that
	\[
		\sum_{i,j=1, i \neq j}^{k_0} 
		\int_{ \RN} \nabla u_i \cdot \nabla u_j + V_\infty u_i u_j \, \rd x 
		= \sum_{i,j=1, i \neq j}^{k_0} \lambda \int_{ \RN} g_{m_0} (u_i) u_j \, \rd x .
	\]
Recalling $\displaystyle \max_{0 \leq t \leq 1} I_{\lambda,\infty} (t T w_\lambda) = c_{\lambda,\infty}$, we get 
	\[
	\begin{aligned}
	I_{\lambda} \left( \gamma_\ell  (t_{\ell}) \right) &\le 
	k_0 c_{\lambda,\infty}  
	+\frac{\wt{t}^2_{\ell}}{2} \intN (V(x)-V_{\infty}) \left( \sum_{i=1}^{k_0} u_i \right)^2 \, \rd x \\
	&\quad - \lambda \intN \left\{ G_{m_0} \left(  \sum_{i=1}^{k_0} \wt{t}_{\ell}  u_i\right)
	-\sum_{i=1}^{k_0} G_{m_0} \left( \wt{t}_{\ell} u_i \right) 
	- \frac{1}{2} \sum_{i,j=1, i \ne j}^{k_0} g_{m_0} \left( \wt{t}_{\ell} u_i \right) \wt{t}_{\ell} u_j \right\} \rd x \\
	&\quad + \frac{\lambda}{2} \sum_{i,j=1, i \ne j}^{k_0} \intN\left\{ 
	\wt{t}_{\ell}^2 g_{m_0}(u_i) u_j - g_{m_0} \left( \wt{t}_{\ell} u_i \right) \wt{t}_{\ell} u_j \right\} \rd x\\
	&=: {k_0} c_{\lambda,\infty} 
	+\frac{\wt{t}_{\ell}^2}{2} \intN (V(x)-V_{\infty}) \left( \sum_{i=1}^{k_0} u_i \right)^2 \,\rd x 
	-L_1(\R^N) + L_2(\R^N).
	\end{aligned}
	\]
By (V6), it holds that 
\[
\intN (V(x)-V_{\infty}) w_{\lambda}(x-\ell e_i)^2 \, \rd x
\le \kappa \intN e^{-\alpha |x|} w_{\lambda}(x-\ell e_i)^2 \,\rd x.
\]
Moreover in (V6) without loss of generality we may assume 
$\alpha \in (\alpha_0 \sqrt{V_{\infty}}, 2 \sqrt{V_{\infty}})$ if $\alpha_0 \in (0,2)$
and $\alpha \in (2 \sqrt{V_{\infty}}, \infty)$ if $\alpha_0=2$.
Then we are able to apply \cite[Proposition 1.2]{BL90} 
(cf. \cite[Lemma 2.4]{Ada1}, \cite[(4.5)]{Hi07}) to deduce that 
\[
\intN (V(x) -V_{\infty}) \left( \sum_{i=1}^{k_0} u_i \right)^2  \rd x 
\le C_0 \max \left\{ e^{-\alpha \ell},e^{-2 \sqrt{V_{\infty}} \ell}\ell^{-(N-1)} \right\} 
\quad \hbox{for all} \ \ell \ge 1.
\]
This yields that
\begin{equation} \label{eq:5.4}
I_{\lambda}(\gamma_\ell(t_{\ell}))\le k_0 c_{\lambda,\infty}
+\frac{C_0 \wt{t}_\ell^2}{2} \max \left\{ e^{-\alpha \ell},e^{-2 \sqrt{V_{\infty}} \ell}\ell^{-(N-1)}\right\} 
-L_1(\R^N)+L_2(\R^N).
\end{equation}

\medskip
\noindent
{\bf Step 2}: [Estimate of $L_1(\R^N)$]. 

\medskip

	Choose $\wt{\ell} = \wt{\ell} (\mathcal{G}) \geq 1$ 
so that $B_1(\ell e_i) \cap B_1(\ell e_j) =\emptyset$
for $i \ne j$ and $\ell \ge \wt{\ell}$.
We decompose
	\[
		\R^N=B_1(\ell e_1) \cup B_1(\ell e_2) \cup
		\cdots \cup B_1(\ell e_{k_0}) \cup \Omega, \quad 
		\Omega := \R^N \setminus \bigcup_{i=1}^{k_0} B_1(\ell e_i)
	\]
and write $B_i=B_1(\ell e_i)$. 
Then one has
	\[
		L_1(\R^N)=L_1(B_1)+L_1(B_2)
		+\cdots + L_1(B_{k_0})+L_1(\Omega).
	\]

	Next, set 
	\[
		K := \ov{\bigcup_{ \ell \geq 1 } \Set{ \wt{t}_{\ell} w_\lambda (x) | x \in B_1(0) } }.
	\]
By $\wt{t}_{\ell} \to 1$ as $\ell \to \infty$, we see that 
$K \subset (0,\infty)$ is compact. Furthermore, for each $i$ and $j$ with $i \neq j$, 
we may suppose $\wt{t}_{\ell} u_j(x) \leq s_K$ on $B_i$ 
provided $\ell \geq \wt{\ell}$ where $s_K>0$ is the constant in \cref{lemma:5.3} (ii). 
Then by \cref{lemma:5.3} (ii) and \cref{lemma:5.4} (ii), we get
	\[
\begin{aligned}
L_1(B_i)
&= \lambda \int_{B_i}
\left\{ G_{m_0} \left( \sum_{i=1}^{k_0} \wt{t}_{\ell} u_i \right) 
-\sum_{i=1}^{k_0} G_{m_0} \left( \wt{t}_{\ell} u_i \right) 
- \frac{1}{2} \sum_{i,j=1,i \ne j}^{k_0} g_{m_0} \left( \wt{t}_\ell u_i\right) \wt{t}_{\ell} u_j \right\} \rd x \\
&\ge \lambda C_K \wt{t}_{\ell} \sum_{j=1,j \ne i}^{k_0} \int_{B_i} u_j  \,\rd x \\
&\ge \lambda C_K C_1 \wt{t}_{\ell} \sum_{j=1,j \ne i}^{k_0}
e^{-\sqrt{V_{\infty}} |e_i-e_j| \ell} \ell^{-\frac{N-1}{2}}
\end{aligned}
	\]
and 
	\[
		\sum_{i=1}^{k_0} L_1(B_i)
		\ge \lambda C_K C_1 \wt{t}_{\ell} \sum_{i,j=1,i \ne j}^{k_0}
		e^{-\sqrt{V_{\infty}} |e_i-e_j| \ell} \ell^{-\frac{N-1}{2}}. 
	\]
Now \cref{lemma:5.3} (i) implies $L_1(\Omega) \ge 0$, from which we conclude that 
\begin{equation} \label{eq:5.5}
-L_1(\R^N) \le -\lambda C_K C_1 \wt{t}_{\ell} \sum_{i,j=1,i \ne j}^{k_0}
e^{-\sqrt{V_{\infty}}|e_i-e_j| \ell} \ell^{-\frac{N-1}{2}}
\quad \hbox{for} \ \ell \ge \tilde{\ell}.
\end{equation}

\medskip
\noindent
{\bf Step 3}: [Estimate of $L_2(\R^N)$]. 

\medskip

First we claim that for any $\varepsilon>0$, there exists $\wt{\ell}_\e \geq \wt{\ell}$ such that 
\begin{equation} \label{eq:5.6}
	\wt{t}_{\ell} g_{m_0}(w_{\lambda}(x)) - g_{m_0} \left( \wt{t}_{\ell} w_{\lambda}(x) \right) 
	\le \varepsilon w_{\lambda}(x)^{1+\nu}
	\quad \text{for all $x\in \R^N$ and $\ell \geq \wt{\ell}_\e$}.
\end{equation}
Indeed from (f2') and (f6), it follows that $g_{m_0}(s)\geq 0$ in $(0,\infty)$ and 
	\[
		\wt{t}_{\ell} g_{m_0}(w_{\lambda}(x)) - g_{m_0} \left( \wt{t}_{\ell} w_{\lambda}(x) \right) 
		\le 
		\wt{t}_{\ell} g(w_{\lambda}(x)) \le \varepsilon w_{\lambda}(x)^{1+\nu} 
		\quad \hbox{for} \ x \in \R^N \setminus
		B_{r_{\varepsilon}}(0) \ \hbox{and}\ \ell \geq \wt{\ell},
	\]
by taking sufficiently large $r_{\varepsilon}>0$. 
On the other hand since $\wt{t}_{\ell} g_{m_0}(w_{\lambda})-g_{m_0}( \wt{t}_{\ell} w_{\lambda}) \to 0$ as $\ell \to \infty$, 
there exist $\wt{\ell}_\e \geq \wt{\ell}$ such that
	\[
		\wt{t}_\ell g_{m_0}(w_{\lambda}(x)) -g_{m_0} \left( \wt{t}_\ell w_{\lambda}(x) \right) 
		\le \varepsilon w_{\lambda}(x)^{1+\nu}
		\quad \text{for any $ x \in B_{r_{\varepsilon}}(0)$ and $\ell \geq \wt{\ell}_\e$},
	\]
yielding \ef{eq:5.6}.

	From \ef{eq:5.6} and \cref{lemma:5.4} (i), for every $\ell \geq \wt{\ell}_\e$, we have
\begin{equation}\label{eq:5.7}
		L_2(\R^N) \le\frac{ \lambda \wt{t}_{\ell} \varepsilon}{2}
		\sum_{i,j=1, i \ne j}^{k_0} \intN u_i^{1+\nu} u_j \,dx 
		\le \lambda C \wt{t}_{\ell} \varepsilon \sum_{i,j=1,i \ne j}^{k_0}
		e^{-\sqrt{V_{\infty}} |e_i-e_j| \ell} \ell^{-\frac{N-1}{2}}.
\end{equation}

\medskip
\noindent
{\bf Step 4}: [Conclusion]. 

\medskip

	From \ef{eq:5.4}, \ef{eq:5.5} and \eqref{eq:5.7}, it holds that for $\ell \geq \wt{\ell}_\e$, 
	\begin{equation}\label{eq:5.8}
		\begin{aligned}
		c_{\lambda, \mathcal{G}} \leq I_{\lambda}(\gamma_\ell(t_\ell)) 
		&\le k_0 c_{\lambda,\infty}  
		+\frac{C_0 \wt{t}_\ell^2}{2} \max \left\{ 
		e^{-\alpha \ell}, e^{-2\sqrt{V_{\infty}}\ell} \ell^{-(N-1)} \right\} \\
		&\quad + \lambda \wt{t}_\ell \left(  C \e  - C_KC_1 \right) \sum_{i,j=1,i \ne j}^{k_0}
		e^{-\sqrt{V_{\infty}}|e_i-e_j|\ell} \ell^{-\frac{N-1}{2}} .
		\end{aligned}
	\end{equation}
By \eqref{eq:1.5}, choose $i_0 \neq j_0$ so that $|e_{i_0}-e_{j_0}| = \alpha_0$. 
Taking a small $\e>0$, we see from \eqref{eq:5.8} that 
for each $\ell \geq \wt{\ell}_\e$, 
	\[
		c_{\lambda, \mathcal{G}} 
		\leq k_0 c_{\lambda,\infty} 
		+\frac{C_0 \wt{t}_\ell^2}{2} \max \left\{ 
		e^{-\alpha \ell}, e^{-2\sqrt{V_{\infty}}\ell} \ell^{-(N-1)} \right\} 
		- \frac{\lambda \wt{t}_\ell C_K C_1 }{2} e^{ - \sqrt{V_\infty} \alpha_0 \ell } \ell^{ - \frac{N-1}{2} }. 
	\]
From $\alpha > \alpha_0 \sqrt{V_\infty}$ and $\wt{t}_\ell \to 1$ as $\ell \to \infty$, 
we may find $\ell_0$ such that if $\ell \geq \ell_0$, then 
	\[
		c_{\lambda, \mathcal{G}} \leq I_\lambda \left( \gamma_\ell (t_\ell) \right) 
		< k_0 c_{\lambda, \infty},
	\]
which completes the proof. 
\end{proof}

	Now we prove \cref{theorem:1.5}: 

\begin{proof}[Proof of \cref{theorem:1.5}]
By Propositions \ref{proposition:5.1} and \ref{proposition:5.2}, 
the auxiliary problem \ef{eq:3.1} admits a $\mathcal{G}$-invariant positive solution $u_{\lambda}$ 
corresponding to $c_{\lambda, \mathcal{G}}$. 
Since \cref{lemma:3.5} holds for $c_{\lambda,\infty}$ by choosing $V \equiv V_\infty$, 
from $c_{\lambda, \mathcal{G}} < k_0 c_{\lambda,\infty}$ and \cref{proposition:3.6}, 
we verify that $u_{\lambda}$ satisfies \eqref{Plam} 
for sufficiently large $\lambda$ and \cref{theorem:1.5} holds.
\end{proof}

	Finally, as noted in \cref{remark:5.5}, we explain the reason why we need the assumption $N \ge 2$ to prove \ef{eq:5.1}. 
When $N=1$, it follows that $\G = \set{\mathrm{id}, -\mathrm{id}}$ and $\alpha_0 = 2 = k_0$, and \eqref{eq:5.8} reduces to 
	\[
		I_{\lambda} \left( \gamma_\ell (t_\ell) \right) 
		\leq 2 c_{\lambda,\infty} 
		+ \left\{ \frac{C_0 \wt{t}_\ell^2}{2} + \lambda \wt{t}_\ell \left( C \e - C_K C_1 \right) \right\} e^{-2\sqrt{V_\infty} \ell}.
	\]
Notice that $C_0$, $C_K$ and $C_1$ are determined by $w_\lambda$ as mentioned in \cref{remark:5.5}, 
and hence, they depend on $\lambda$. 
Thus, it is not clear whether or not the following inequality holds:
	\[
		\frac{C_0 \wt{t}_\ell^2}{2} + \lambda \wt{t}_\ell \left( C \e - C_K C_1 \right) < 0.
	\]
From this reason, to exploit the same idea when $N=1$, 
we need to look at the dependence of $C_0,C_K,C_1$ on $\lambda$ more precisely. 
This will be an issue of next section.


\section{proof of \cref{theorem:1.6}}
\label{section:6}

	In this section, we suppose $N=1$, (V1), (V2), (V5'), (V6'), (f1), (f5) and (f6), and 
prove \cref{theorem:1.6}. To this end, as mentioned in the end of \cref{section:5}, 
we shall study the behavior of $(w_\lambda)$ as $\lambda \to \infty$,
where $w_{\lambda}$ is the least energy solution of \eqref{eq:3.8}.

	As in \cref{section:5}, we observe that for each $\lambda > 0$, $I_\lambda$ has the mountain pass geometry in 
$H^1_E(\R) := \Set{ u \in H^1(\R) | u(-x) = u(x) }$ and we can define $c_{\lambda,E} \in (0,\infty)$ by 
	\[
		c_{\lambda,E} := \inf_{ \gamma \in \Gamma_{I_\lambda,E} } \max_{ 0\leq t \leq 1} I_{\lambda} ( \gamma (t) ), \quad 
		\Gamma_{I_\lambda,E} := \Set{ \gamma \in C \big( [0,1], H^1_E(\R) \big) | 
		\gamma (0) = 0, \ I_{\lambda} (\gamma (1)) < 0 }.
	\]
Let $(u_{\lambda,n})_n \subset H^1_E(\R)$ be a Palais--Smale--Cerami sequence of $I_\lambda$ at level $c_{\lambda,E}$. 
Then $(u_{\lambda,n})_n$ is bounded in $H^1(\R)$ due to \cref{proposition:3.2}. 
Therefore, we aim to prove that there exists $\lambda_1 = \lambda_1(V_\infty,f) $ such that 
	\begin{equation}\label{eq:6.1}
		\lambda \geq \lambda_1 \quad \Rightarrow \quad 
		\text{$\exists u_\lambda \in H^1_E(\R), \ \exists (u_{\lambda,n_k})_k$ s.t. 
		$\| u_{\lambda,n_k} - u_{\lambda} \|_{H^1(\R)} \to 0$}.
	\end{equation}
Once \eqref{eq:6.1} is established, since we may prove $c_{\lambda, E} \leq M \lambda^{-2/(p-2)}$ for some $M>0$ 
as in \cref{lemma:3.5}, by \cref{proposition:3.6}, 
there exists $\lambda_0 = \lambda_0(V,f) \geq \lambda_1$ such that 
$u_{\lambda}$ is a solution of \eqref{Plam} provided $\lambda \geq \lambda_0$. Therefore \cref{theorem:1.6} holds. 

In what follows, our aim is to prove \eqref{eq:6.1}. 
For this purpose, 
recall $I_{\lambda,\infty}$ in \eqref{eq:3.9} and define the mountain pass value of $I_{\lambda,\infty}$ as in \eqref{eq:3.4}:
	\[
		c_{\lambda,\infty} := \inf_{\gamma \in \Gamma_{I_{\lambda,\infty}}} 
		\max_{0 \leq t \leq 1} I_{\lambda,\infty} (\gamma (t)), \ \ 
		\Gamma_{I_{\lambda,\infty}} := \Set{\gamma \in C \big( [0,1] , H^1(\R) \big) | \gamma (0) = 0, \ I_{\lambda,\infty} (\gamma (1)) < 0 }.
	\]
From the definition of $g_{m_0}$ and (f6), there exists $\lambda_2 = \lambda_2(V_\infty,f) > 0$ such that 
if $\lambda \geq \lambda_2$, then we may find $\wt{s}_\lambda  \in (0, s_0)$ so that 
	\[
		\lambda G_{m_0} (s) - \frac{V_\infty}{2} s^2 < 0 = \lambda G_{m_0} (\wt{s}_\lambda) - \frac{V_\infty}{2} (\wt{s}_\lambda)^2
		  \quad \text{for any $s \in (0,\wt{s}_\lambda)$}, \quad 
		\lambda g_{m_0} (\wt{s}_\lambda) - V_\infty \wt{s}_\lambda > 0.
	\]
Hence, when $\lambda \geq \lambda_2$, 
we can apply the result in \cite{JT03b} to find a unique positive solution $w_\lambda \in H^1_E(\R)$ such that 
	\[
		\left\{\begin{aligned}
			-w_\lambda'' + V_\infty w_\lambda &= \lambda g_{m_0} (w_\lambda) \quad \text{in} \ \R, 
			\quad w_\lambda (0) = \max_{x\in \R} w_\lambda(x), \quad  w_{\lambda}'(x) < 0 \quad \text{in} \ (0,\infty), \\
			w_{\lambda} (-x) &= w_{\lambda} (x) \quad \text{for all $x \in \R$}, \quad 
			I_{\lambda,\infty} ( w_\lambda) = c_{\lambda,\infty}.
		\end{aligned}\right.
	\]
Moreover, by \cref{theorem:1.3}, we have 
	\begin{equation}\label{eq:6.2}
		v_{\lambda} (x) := \lambda^{ 1/(p-2) } w_\lambda (x)
		\to \omega_0 (x) \quad \text{strongly in $H^1(\R)$} \ \hbox{as $\lambda \to \infty$},
	\end{equation}
where $\omega_0$ is a unique positive solution of 
	\[
		\left\{
		\begin{aligned}
			- \omega_0'' + V_\infty \omega_0 &= \omega_0^{p-1} \quad \text{in} \ \R, \quad 
			\omega_0(0) = \max_{x\in \R} \omega_0(x), \quad 
			\omega_0'(x) < 0 \quad \text{in} \ (0,\infty), 
			\\
			\omega_0 (-x) &= \omega_0 (x) \quad \text{for all $x \in \R$}.
		\end{aligned}
		\right.
	\]

	We first derive a uniform decay of $(v_\lambda)$ as $|x| \to \infty$:

\begin{proposition}\label{proposition:6.1}
There exist $\lambda_3 = \lambda_3(V_\infty,f) \geq \lambda_2$, $C_i =C_i(V_\infty,p) >0$ $(i=1,2)$ such that 
		if $\lambda \geq \lambda_3$, then 
			\[
				C_1 \exp \left( - \sqrt{V_\infty} \left|x\right| \right) \leq v_\lambda (x) 
				\leq C_2 \exp \left( - \sqrt{V_\infty} \left|x\right| \right) 
				\quad \text{for all $x \in \R$}.
			\]
	\end{proposition}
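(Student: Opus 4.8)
The plan is to promote the asymptotics $v_\lambda \to \omega_0$ in $H^1(\R)$ supplied by \eqref{eq:6.2} into uniform pointwise bounds, exploiting that in dimension one $\|u\|_{L^\infty(\R)} \le C\|u\|_{H^1(\R)}$, so that $v_\lambda \to \omega_0$ also in $L^\infty(\R)$. Hence there are $\lambda_3 = \lambda_3(V_\infty,f) \ge \lambda_2$ and $M = M(V_\infty,p)>0$ such that, for all $\lambda \ge \lambda_3$, one has $\|v_\lambda\|_{H^1(\R)}+\|v_\lambda\|_{L^\infty(\R)} \le M$ and $v_\lambda(1) \ge \tfrac12\omega_0(1)>0$. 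First I would record the following consequences, all valid for $\lambda \ge \lambda_3$: (a) since $w_\lambda$, hence $v_\lambda$, is even and strictly decreasing on $(0,\infty)$, the $H^1$ bound gives $R\,v_\lambda(R)^2 \le \int_0^R v_\lambda^2 \le M^2$, i.e. $v_\lambda(R) \le M R^{-1/2}$; (b) $\|w_\lambda\|_{L^\infty(\R)} = \lambda^{-1/(p-2)}\|v_\lambda\|_{L^\infty(\R)} \to 0$, so by {\rm (f5)} and $g_{m_0}\equiv f$ near $0$, after enlarging $\lambda_3$ we may assume
\[
\tfrac12 v_\lambda(y)^{p-1} \le \rho_\lambda(y) := \lambda^{(p-1)/(p-2)} g_{m_0}\!\big(\lambda^{-1/(p-2)} v_\lambda(y)\big) \le 2\,v_\lambda(y)^{p-1} \qquad (y \in \R);
\]
(c) $v_\lambda$ solves $-v_\lambda''+V_\infty v_\lambda = \rho_\lambda$ in $\R$, with $\rho_\lambda \ge 0$ and $v_\lambda(x) \to 0$ as $|x| \to \infty$.

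Next I would get a crude but uniform exponential bound by a comparison argument. Fix $\eta \in \big(\sqrt{V_\infty}/(p-1),\,\sqrt{V_\infty}\big)$ with $(p-1)\eta \ne \sqrt{V_\infty}$ — possible exactly because $p>2$. Using $v_\lambda(R) \le M R^{-1/2}$, pick $R_\eta = R_\eta(V_\infty,p) \ge 1$ so large that $2\,v_\lambda(R_\eta)^{p-2} \le V_\infty-\eta^2$ for every $\lambda \ge \lambda_3$. Then on $\{|x| \ge R_\eta\}$, monotonicity and $\rho_\lambda \le 2v_\lambda^{p-1}$ give
\[
-v_\lambda''+\eta^2 v_\lambda = \rho_\lambda-(V_\infty-\eta^2)v_\lambda \le \big(2\,v_\lambda(R_\eta)^{p-2}-(V_\infty-\eta^2)\big)v_\lambda \le 0 .
\]
Comparing on each of the two half-lines with the supersolution $x \mapsto v_\lambda(R_\eta)e^{-\eta(|x|-R_\eta)}$ of $-u''+\eta^2 u=0$ (it equals $v_\lambda$ at $\pm R_\eta$ and vanishes at infinity, as does $v_\lambda$), and using $v_\lambda \le M$ on $[-R_\eta,R_\eta]$, I obtain $v_\lambda(x) \le C_\eta e^{-\eta|x|}$ on $\R$ for all $\lambda \ge \lambda_3$, with $C_\eta := M e^{\eta R_\eta} = C_\eta(V_\infty,p)$.

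For the sharp bounds I would follow the argument behind \eqref{eq:5.2}. Let $G(z) = (2\sqrt{V_\infty})^{-1}e^{-\sqrt{V_\infty}|z|}$ be the Green function of $-u''+V_\infty u$ on $\R$. Since $v_\lambda$ is bounded and $\rho_\lambda \in L^1(\R)$ by the crude bound, $v_\lambda = G*\rho_\lambda$ (two bounded solutions of the same equation differ by a bounded — hence zero — solution of $-\phi''+V_\infty\phi=0$). Using $|x-y| \ge |x|-|y|$, $\rho_\lambda(y) \le 2C_\eta^{p-1}e^{-(p-1)\eta|y|}$ and $(p-1)\eta>\sqrt{V_\infty}$,
\[
v_\lambda(x) = \int_{\R} G(x-y)\rho_\lambda(y)\,\rd y \le \frac{C_\eta^{p-1}}{\sqrt{V_\infty}}\,e^{-\sqrt{V_\infty}|x|}\int_{\R} e^{(\sqrt{V_\infty}-(p-1)\eta)|y|}\,\rd y = C_2\,e^{-\sqrt{V_\infty}|x|},
\]
with $C_2 = 2C_\eta^{p-1}\big(\sqrt{V_\infty}\,((p-1)\eta-\sqrt{V_\infty})\big)^{-1} = C_2(V_\infty,p)$ (equivalently one invokes \cite[Proposition~1.2]{BL90} as in Section~\ref{section:5}, noting that the constants it produces depend only on $\sqrt{V_\infty}$ and on the uniformly controlled decay of $\rho_\lambda$). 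For the lower bound, $\rho_\lambda \ge 0$ makes $v_\lambda$ a positive supersolution of $-u''+V_\infty u = 0$; comparing on $\{|x| \ge 1\}$ with the subsolution $x \mapsto v_\lambda(1)e^{-\sqrt{V_\infty}(|x|-1)}$ (equal to $v_\lambda$ at $\pm1$ by evenness, vanishing at infinity) gives $v_\lambda(x) \ge v_\lambda(1)e^{-\sqrt{V_\infty}|x|}$ for $|x| \ge 1$, while for $|x| \le 1$ monotonicity gives $v_\lambda(x) \ge v_\lambda(1) \ge v_\lambda(1)e^{-\sqrt{V_\infty}|x|}$; since $v_\lambda(1) \ge \tfrac12\omega_0(1) =: C_1 = C_1(V_\infty,p)>0$ for $\lambda \ge \lambda_3$, the lower bound follows.

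The real difficulty is not any individual estimate but the bookkeeping that confines all $\lambda$- and $f$-dependence to the threshold $\lambda_3 = \lambda_3(V_\infty,f)$ while keeping $C_1,C_2$ dependent on $V_\infty,p$ only. This rests on three points: upgrading \eqref{eq:6.2} to uniform $L^\infty$-convergence, so that the uniform $L^\infty$- and $H^1$-bounds and the positivity at $x=1$ are expressed through $\omega_0$, which depends only on $V_\infty,p$; the fact that $p>2$ makes a single bootstrap (from decay rate $\eta$ to rate $(p-1)\eta$) overshoot the critical rate $\sqrt{V_\infty}$, so no logarithmic correction appears; and the harmless observation that the rate and constant of $\rho_\lambda$ fed into the convolution estimate are controlled purely by $C_\eta$ and $\eta$, hence by $V_\infty$ and $p$.
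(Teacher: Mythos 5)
Your argument is correct and follows essentially the same route as the paper: a uniform comparison-principle estimate giving decay at a rate $\eta$ just below $\sqrt{V_\infty}$ (the paper's $\sqrt{V_\infty-\delta_{V_\infty,p}}$), a bootstrap through the Green-function representation using $(p-1)\eta>\sqrt{V_\infty}$ for the sharp upper bound, and a comparison from below anchored at a point where $v_\lambda$ is uniformly bounded away from zero. The only cosmetic differences are that you make the choice of $R_\eta$ uniform via the monotonicity inequality $R\,v_\lambda(R)^2\le\|v_\lambda\|_{L^2}^2$ rather than via pointwise convergence to $\omega_0$ at a fixed radius, you evaluate the convolution integral explicitly instead of citing \cite[Proposition 1.2]{BL90}, and you anchor the lower bound at $x=1$ instead of $x=0$.
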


\begin{proof}
Since $v_\lambda (-x) = v_\lambda(x)$, it suffices to prove the inequalities in $[0,\infty)$. 
By the definition of $v_{\lambda}$, one has 
	\[
		- v_\lambda'' + V_\infty v_\lambda = \lambda^{ (p-1)/(p-2) } g_{m_0} \left( \lambda^{ - 1/(p-2) } v_\lambda \right) 
		\quad \text{in} \ \R.
	\]
From \eqref{eq:6.2} and (f5), we may find $\lambda_3 = \lambda_3(V_\infty, f) \geq \lambda_2$ such that 
if $\lambda \geq \lambda_3$, then 
	\begin{equation}\label{eq:6.3}
		g_{m_0} \left( \lambda^{- 1/(p-2)  } v_\lambda (x) \right) \leq 2 \left( \lambda^{ - 1/(p-2)  } v_\lambda(x) \right)^{p-1} 
		\quad \text{for every $x \in \R$},
	\end{equation}
and hence, 
	\begin{equation}\label{eq:6.4}
		-v_\lambda'' + V_\infty v_\lambda \leq 2 v_\lambda^{p-1} \quad \text{in} \ \R.
	\end{equation}

Next, we fix $\delta_{V_\infty,p} > 0$ and $R_{V_\infty,p} > 0$ so that 
	\begin{equation}\label{eq:6.5}
		(p-1) \sqrt{V_\infty -\delta_{V_\infty,p}} > \sqrt{V_\infty}, 
		\quad V_\infty - 2 \omega_0^{p-2} (R_{V_\infty,p}) \geq V_\infty - \frac{1}{2} \delta_{V_\infty,p}.
	\end{equation}
Noting \eqref{eq:6.2} and enlarging $\lambda_3$ if necessary, we may assume that 
	\[
		V_\infty - 2 v_\lambda^{p-2} (R_{V_\infty,p}) \geq V_\infty - \delta_{V_\infty,p} 
		\quad \text{for each $\lambda \geq \lambda_3$}.
	\]
Then by $v_\lambda'(x) < 0$ in $(0,\infty)$, one obtains 
	\[
		V_\infty - 2 v_\lambda^{p-2} (x) \geq V_\infty - \delta_{V_\infty,p} \quad 
		\text{for each $x \geq R_{V_\infty,p}$ and $\lambda \geq \lambda_3$}.
	\]
Combining this inequality with \eqref{eq:6.4}, we observe that for $\lambda \geq \lambda_3$,  
	\[
		-v_\lambda'' + \left( V_\infty - \delta_{V_\infty,p} \right) v_\lambda \leq 0 \quad \text{in} \ \left(R_{V_\infty,p},\infty\right).
	\]
Since $v_\lambda (R_{V_\infty,p}) \to \omega_0 (R_{V_\infty,p})$ as $\lambda \to \infty$, 
by taking $\lambda_3$ larger if necessary, we may suppose that 
there exists $C(V_\infty,p) > 0$ such that 
	\[
		v_\lambda \left( R_{V_\infty,p} \right) \leq C \exp \left( - \sqrt{V_\infty - \delta_{V_\infty,p}} \, R_{V_\infty,p} \right)
		\quad \text{provided that $\lambda \geq \lambda_3$.}
	\]
Noting that $v_\lambda(x) \to 0$ as $|x| \to \infty$ and that $W(x) := C \exp \left( - \sqrt{V_\infty - \delta_{V_\infty,p}} \, x \right)$ 
is a solution of $-w'' + (V_\infty - \delta_{V_\infty,p}) w = 0$ in $(R_{V_\infty,p},\infty)$, 
we see from the comparison theorem that 
	\[
		v_\lambda (x) \leq C \exp \left( - \sqrt{V_\infty -\delta_{V_\infty,p}} \, x \right) 
		\quad \text{for any $x \in (R_{V_\infty,p},\infty)$ and $\lambda \geq \lambda_3$}.
	\]
Thus by \eqref{eq:6.2}, we conclude that for some $\wt{C} = \wt{C} (V_\infty,p) > 0$, 
	\begin{equation}\label{eq:6.6}
		\lambda \geq \lambda_3 \quad \Rightarrow \quad 
		v_\lambda (x) \leq \wt{C} \exp \left( - \sqrt{V_\infty - \delta_{V_\infty,p}} \left|x\right| \right) \quad \text{for all $x \in \R$}.
	\end{equation}

Next, we recall that 
	\[
		G(x) := \frac{1}{2\sqrt{V_\infty}} \exp \left( - \sqrt{V_\infty} \left|x\right| \right)
	\]
is the Green function of $- \frac{\rd^2}{\rd x^2} + V_\infty$, from which we deduce that 
	\[
		v_\lambda (x) = \int_{\R} G(x-y) \lambda^{ (p-1)/(p-2)  } g_{m_0} \left(  \lambda^{ - 1/ (p-2)  } v_\lambda (y) \right) \rd y.
	\]
Thus, \eqref{eq:6.3} and \eqref{eq:6.6} yield 
	\[
		v_\lambda (x) \leq \int_{\R} G(x-y) 2 v_\lambda^{p-1} (y) \, \rd y 
		\leq 2 \wt{C}^{p-1} \int_{\R} G(x-y) \exp \left( - (p-1) \sqrt{V_\infty - \delta_{V_\infty,p}} \left|y\right| \right) \rd y.
	\]
By \eqref{eq:6.5} and \cite[Proposition 1.2]{BL90}, there exist $C=C(V_\infty,p)>0$ such that 
	\[
		\int_{\R} G(x-y) \exp \left( -(p-1) \sqrt{V_\infty - \delta_{V_\infty,p}} \left|y\right| \right) \rd y 
		\leq C \exp \left( - \sqrt{V_\infty} \left| x \right| \right) 
		\quad \text{for each $x \in \R$}.
	\]
Hence, for some $C_2=C_2(V_\infty,p)>0$, we have 
	\[
		\lambda \geq \lambda_3 \quad \Rightarrow \quad 
		v_\lambda (x) \leq C_2 \exp \left( - \sqrt{V_\infty} \left| x \right| \right) \quad \text{for all $x\in \R$}. 
	\]

For the lower estimate, since $g_{m_0} (s) \geq 0$ in $[0,\infty)$, we see that
	\[
		- v_\lambda '' + V_\infty v_\lambda \geq 0 \quad \text{in} \ \R.
	\]
By $v_\lambda (0) \to \omega_0 (0) > 0$ as $\lambda \to \infty$, by enlarging $\lambda_3$ if necessary, 
we find $C_1(V_\infty,p)>0$ so that 
	\[
		\lambda \geq \lambda_3 \quad \Rightarrow \quad 0 < C_1 \leq v_\lambda (0).
	\]
Applying the comparison theorem for $v_\lambda$ and $C_1 \exp ( - \sqrt{V_\infty} \, x )$ on $(0,\infty)$, 
we get 
	\[
		C_1 \exp \left( - \sqrt{V_\infty} \, x \right) \leq v_\lambda (x) 
		\quad \text{for each $x \in [0,\infty)$ and $\lambda \geq \lambda_3$}.
	\]
From $v_\lambda (-x) = v_\lambda (x)$, we complete the proof. 
	\end{proof}

In order to estimate the mountain pass value $c_{\lambda,E}$, we use the functional $J_{\infty,\infty} (u)$: 
	\[
		J_{\infty,\infty} (u) := \frac{1}{2} \int_{\R} (u')^2 + V_\infty u^2 \,\rd x - \frac{1}{p} \int_{\R} u_+^p \, \rd x.
	\]
We choose $T_0 > 0$ so that 
	\[
		J_{\infty,\infty} \left( T_0 \omega_0 \right) < -1
	\]
and set 
	\[
		v_{\lambda,n} (x) := v_\lambda (x+n), \ \ 
		w_{\lambda,n} (x) := w_{\lambda} (x+n), \ \ 
		\omega_{0,n} (x) := \omega_0 (x+n), \ \
		\gamma_{\lambda,n} (t) := t \left( w_{\lambda,n} + w_{\lambda,-n} \right).
	\]
Since \eqref{eq:6.2} and (f5) imply that as $\lambda \to \infty$, 
	\[
		\begin{aligned}
			\lambda^{ 2/(p-2) } I_{\lambda, \infty} \big( \gamma_{\lambda,n} \left( T_0 t \right) \big) 
			&= \lambda^{ 2/(p-2) } I_{\lambda, \infty} \left( \lambda^{-1/(p-2)} \lambda^{ 1/(p-2) } \gamma_{\lambda,n} \left( T_0 t \right) \right) 
			\\
			&\to J_{\infty,\infty} \big( t T_0 \left( \omega_{0,n} + \omega_{0,-n} \right) \big) 
			\quad \text{uniformly in $t \in [0,1]$ and $n \in \N$}
		\end{aligned}
	\]
and $J_{\infty,\infty} \big( t T_0( \omega_{0,n} + \omega_{0,-n} ) \big) \to 2 J_{\infty,\infty} (t T_0 \omega_0)$ as $n \to \infty$,  
we may find $n_0 \in \N$ and $\lambda_4 = \lambda_4(V_\infty,f) \geq \lambda_3$ such that 
	\[
		\lambda \geq \lambda_4, \ n \geq n_0 \quad \Rightarrow \quad 
		\lambda^{ 2/(p-2) } I_{\lambda,\infty} \big( \gamma_{\lambda,n} (T_0) \big) < -1.
	\]
We also notice that for each $ \lambda \geq \lambda_4$, 
	\[
		\lim_{n \to \infty}  \left| \lambda^{2/(p-2)} I_\lambda \big( \gamma_{\lambda,n} (T_0) \big) 
		- \lambda^{2/(p-2)} I_{\lambda,\infty} \big( \gamma_{\lambda,n} (T_0) \big) \right| = 0
		\quad \text{uniformly in $\lambda \geq \lambda_4$}.
	\]
Therefore, by enlarging $n_0$ if necessary, we have for $n \geq n_0$ and $\lambda \geq \lambda_4$, 
	\[
		\gamma_{\lambda,n} \left( T_0 \cdot \right) \in \Gamma_{I_\lambda,E}, \quad 
		c_{\lambda,E} \leq \max_{0 \leq t \leq 1} I_\lambda \big( \gamma_{\lambda,n} \left( T_0 t \right) \big).
	\]
Our next aim is to prove the following.
 
\begin{proposition}\label{proposition:6.2}
Let $\kappa > 0$ and $\alpha \in (2\sqrt{V_\infty} , \infty )$ be the constants in \emph{(V6')}. 
Then there exist $\lambda_5 = \lambda_5(V_\infty,f) \geq \lambda_4$, 
$C_0 = C_0(V_\infty,p) > 0$ and $\wt{C}_0 = \wt{C}_0(V_\infty,p) > 0$ such that if 
			\begin{equation}\label{eq:6.7}
				\lambda \geq \lambda_5, \quad - C_0 + \frac{\wt{C}_0 \kappa \alpha }{\alpha^2 - 4V_\infty} < 0,
			\end{equation}
		then for sufficiently large $n \in \N$, it holds that
			\begin{equation}\label{eq:6.8}
				c_{\lambda,E} \leq \max_{0 \leq s \leq T_0} I_\lambda \big( \gamma_{\lambda,n} (s) \big) 
				< 2 I_{\lambda,\infty} \left( w_\lambda \right) = 2 c_{\lambda,\infty}.
			\end{equation}
	\end{proposition}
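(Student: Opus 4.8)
The plan is to run the interaction-estimate scheme of \cref{proposition:5.2}, but for the \emph{rescaled} solutions $v_\lambda=\lambda^{1/(p-2)}w_\lambda$ in place of $w_\lambda$, so that all interaction constants become independent of $\lambda$. Writing $J_{\lambda,\infty}(v):=\frac12\int_{\R}(v')^2+V_\infty v^2\,\rd x-\lambda^{p/(p-2)}\int_{\R}G_{m_0}(\lambda^{-1/(p-2)}v)\,\rd x$, the scaling identity \eqref{eq:3.3} gives $\lambda^{2/(p-2)}I_\lambda(\gamma_{\lambda,n}(s))=J_\lambda\big(s(v_{\lambda,n}+v_{\lambda,-n})\big)$ and $\lambda^{2/(p-2)}c_{\lambda,\infty}=J_{\lambda,\infty}(v_\lambda)$; hence \eqref{eq:6.8} is equivalent to $J_\lambda\big(\tilde s(v_{\lambda,n}+v_{\lambda,-n})\big)<2J_{\lambda,\infty}(v_\lambda)$, where $\tilde s=\tilde s(\lambda,n)\in(0,T_0)$ is the (unique interior) maximizer of $s\mapsto J_\lambda(s(v_{\lambda,n}+v_{\lambda,-n}))$. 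As in the discussion preceding \cref{proposition:5.2}, this $\tilde s$ is well defined thanks to (f5) and (f6), and using the uniform convergence $v_\lambda\to\omega_0$ of \cref{theorem:1.3} one checks that $\tilde s\to1$ as $n\to\infty$, uniformly for $\lambda$ large.

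Next I would repeat the energy decomposition of Step~1 of \cref{proposition:5.2}, now with $\phi_1=v_{\lambda,n}$, $\phi_2=v_{\lambda,-n}$ and with $g_{m_0},G_{m_0}$ replaced by $g_\lambda(v):=\lambda^{(p-1)/(p-2)}g_{m_0}(\lambda^{-1/(p-2)}v)$ and $G_\lambda$; the hypotheses of \cref{lemma:5.3} are inherited by $g_\lambda$ via \cref{lemma:2.1}(iv) and (f5). Using that $\phi_i$ solves $-\phi_i''+V_\infty\phi_i=g_\lambda(\phi_i)$, that the cross terms are symmetric under $x\mapsto-x$, and that $\max_{t\ge0}J_{\lambda,\infty}(tv_\lambda)=J_{\lambda,\infty}(v_\lambda)$, I obtain
\[
J_\lambda\big(\tilde s(\phi_1+\phi_2)\big)\le 2J_{\lambda,\infty}(v_\lambda)+\frac{\tilde s^2}{2}\int_{\R}(V(x)-V_\infty)(\phi_1+\phi_2)^2\,\rd x-\tilde L_1+\tilde L_2,
\]
where $\tilde L_1$ is the nonnegative quantity of \cref{lemma:5.3}(i) for $(g_\lambda,G_\lambda)$ and $\tilde L_2=\tilde s\int_{\R}\big[\tilde s\,g_\lambda(\phi_1)-g_\lambda(\tilde s\phi_1)\big]\phi_2\,\rd x$.

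The three terms on the right are then controlled by the \emph{$\lambda$-uniform} two-sided bound $C_1e^{-\sqrt{V_\infty}|x|}\le v_\lambda(x)\le C_2e^{-\sqrt{V_\infty}|x|}$ of \cref{proposition:6.1}, which is precisely the device replacing \eqref{eq:5.2}. For the potential term, (V6') gives $(V(x)-V_\infty)_+\le\kappa e^{-\alpha|x|}$, and combining this with $\phi_1(x)^2\le C_2^2e^{-2\sqrt{V_\infty}|x+n|}$ and the elementary one-dimensional bound $\int_{\R}e^{-2\sqrt{V_\infty}|y|}e^{-\alpha|y-n|}\,\rd y\le\frac{2\alpha}{\alpha^2-4V_\infty}e^{-2\sqrt{V_\infty}n}$ (valid since $\alpha>2\sqrt{V_\infty}$, which is the source of the factor $\alpha/(\alpha^2-4V_\infty)$ in \eqref{eq:6.7}) yields an upper bound $\frac{\widetilde{C}_0\kappa\alpha}{\alpha^2-4V_\infty}e^{-2\sqrt{V_\infty}n}$ with $\widetilde{C}_0=\widetilde{C}_0(V_\infty,p)$. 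For the gain $\tilde L_1$, I would apply \cref{lemma:5.3}(ii) to $(g_\lambda,G_\lambda)$ on $B_1(\pm n)$: there \cref{proposition:6.1} forces $\tilde s\phi_i$ into a fixed compact $K\subset(0,\infty)$ \emph{not depending on $\lambda$} while keeping the other bump $\le s_K$, and since $g_\lambda(v)/v^{p-1}\to1$ and $G_\lambda(v)/v^{p}\to 1/p$ uniformly on bounded sets by (f5), the constant in \cref{lemma:5.3}(ii) may be taken $\lambda$-uniform for $\lambda$ large; together with the lower bound $\int_{B_1(\pm n)}\phi_i\,\rd x\ge c\,e^{-2\sqrt{V_\infty}n}$ from \cref{proposition:6.1} this gives $\tilde L_1\ge C_0e^{-2\sqrt{V_\infty}n}$ with $C_0=C_0(V_\infty,p)>0$. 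For $\tilde L_2$, writing $g_\lambda(v)=v^{p-1}h_\lambda(v)$ with $h_\lambda\to1$ uniformly on bounded sets, the integrand equals $\phi_1^{p-1}\big[\tilde s\,h_\lambda(\phi_1)-\tilde s^{p-1}h_\lambda(\tilde s\phi_1)\big]\phi_2$, whose bracket is $\le\varepsilon$ for $\lambda,n$ large because $\tilde s\to1$; with $\int_{\R}\phi_1^{p-1}\phi_2\,\rd x\le C(V_\infty,p)e^{-2\sqrt{V_\infty}n}$ this gives $|\tilde L_2|\le\varepsilon\,C(V_\infty,p)e^{-2\sqrt{V_\infty}n}$.

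Finally, I would fix $C_0$ and $\widetilde{C}_0$ as above (depending only on $V_\infty$ and $p$); assuming \eqref{eq:6.7}, choose $\varepsilon>0$ so small that $\varepsilon\,C(V_\infty,p)<C_0-\frac{\widetilde{C}_0\kappa\alpha}{\alpha^2-4V_\infty}$. Then the coefficient of $e^{-2\sqrt{V_\infty}n}$ in the displayed estimate is negative, so $J_\lambda(\tilde s(\phi_1+\phi_2))<2J_{\lambda,\infty}(v_\lambda)$ for all large $n$, which is \eqref{eq:6.8} (the inequality $c_{\lambda,E}\le\max_sI_\lambda(\gamma_{\lambda,n}(s))$ having been recorded before the statement). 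The main obstacle, and the whole point of this section, is exactly the uniformity in $\lambda$ flagged in \cref{remark:5.5} and at the end of \cref{section:5}: the constants in \cref{lemma:5.3} and the smallness of $\tilde L_2$ must not depend on $\lambda$. Passing to $v_\lambda$ and invoking the $\lambda$-uniform decay of \cref{proposition:6.1} together with the uniform-on-bounded-sets convergence $g_\lambda(\cdot)\to(\cdot)^{p-1}$ from (f5) is what makes this possible, and the delicate part of the write-up is the bookkeeping of which constants survive these limits.
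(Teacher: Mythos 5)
Your proposal follows the same overall route as the paper: the same two-bump path $\gamma_{\lambda,n}$, the same decomposition into a potential term, a superadditivity gain $L_1$ and a scaling error $L_2$, the same use of the $\lambda$-uniform two-sided decay of \cref{proposition:6.1} as the device that makes every interaction constant depend only on $(V_\infty,p)$, and the same explicit one-dimensional integral producing the factor $\alpha/(\alpha^2-4V_\infty)$ in \eqref{eq:6.7}. The treatments of the potential term and of $L_2$ match the paper's \eqref{eq:6.14} and \eqref{eq:6.15} essentially verbatim (modulo your choice to phrase everything through the rescaled functional $J_\lambda$, which is equivalent by \eqref{eq:3.3}).

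The one place where you diverge is the lower bound on the gain term $L_1$, and it is exactly the delicate spot you flag. You propose to apply \cref{lemma:5.3}~(ii) to the rescaled nonlinearity $g_\lambda$ and assert that its constants $C_K,s_K$ can be taken $\lambda$-uniform because $g_\lambda(v)/v^{p-1}\to 1$ uniformly on bounded sets. As written this is an assertion, not a proof: \cref{lemma:5.3} is cited from [Hi08] as a black box, so ``the constant may be taken $\lambda$-uniform'' requires reopening that proof and checking that $C_K$ depends only on $\inf_K g_\lambda$ (bounded below uniformly by (f5)) and on the moduli $g_\lambda(s)/s,\,G_\lambda(s)/s\to 0$ as $s\to 0^+$ (uniform in $\lambda$ by \eqref{alignat:2.9}). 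The paper sidesteps this entirely: it uses \cref{lemma:5.3}~(i) only for nonnegativity of the integrand, restricts to $|x-n|\le 1$, and there computes directly
\[
F\big(a+b\big)-F(a)-\tfrac12 f(a)b=\int_0^1\Big\{f(a+\theta b)-\tfrac12 f(a)\Big\}b\,\rd\theta\ \ge\ \tfrac12 f(a)b\ \ge\ \tfrac14 a^{p-1}b,
\]
using the monotonicity of $f$ coming from (f6) and the two-sided bound on $f$ from (f5) and \eqref{eq:6.9}; this yields $c_1=c_1(V_\infty,p)$ with no appeal to the cited lemma. Your route can be made to work, but to close it you would either have to supply the uniformity argument for \cref{lemma:5.3}~(ii) (essentially redoing the two-term case of that lemma with the uniform bounds above) or fall back on the paper's direct computation. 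I would call this a small, fixable gap rather than a wrong approach; everything else in your outline is sound.
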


\begin{proof}
By \eqref{eq:6.2} and (f5), for some $\lambda_5 = \lambda_5 (V_\infty,f) \geq \lambda_4$, we may assume that 
for each $\lambda \geq \lambda_5$, 
	\begin{equation}\label{eq:6.9}
		\begin{aligned}
			&T_0 \max_{x \in \R} \big( w_{\lambda,n} (x) + w_{\lambda,-n} (x) \big) < s_0, \quad 
			4 \max_{x \in \R} w_{\lambda} (x) < s_0,
			\\
			&\frac{1}{2}s^{p-1} \leq f(s) \leq 2 s^{p-1} \quad \text{for each $s \in \left[ 0, 4 \| w_\lambda \|_{L^\infty(\R)} \right]$}.
		\end{aligned}
	\end{equation}
Thus, one has
	\[
		G_{m_0} \Big( s \big( w_{\lambda,n} (x) + w_{\lambda,-n} (x) \big) \Big) 
		= F \Big( s \big( w_{\lambda,n} (x) + w_{\lambda,-n} (x) \big) \Big) 
		\quad \text{for every $s \in [0,T_0]$ and $x \in \R$}.
	\]
By using
	\[
		\lambda \int_{\R} f( w_{\lambda,-n} ) w_{\lambda,n} \, \rd x 
		= 
		\int_{\R} w_{\lambda,n}' w_{\lambda,-n}' + V_\infty w_{\lambda,n} w_{\lambda,-n} \, \rd x 
		= \lambda \int_{\R} f( w_{\lambda,n}  ) w_{\lambda,-n} \, \rd x,
	\]
we can compute $I_{\lambda} ( \gamma_{\lambda,n} (s)  )$ as 
	\begin{equation}\label{eq:6.10}
		\begin{aligned}
			& 	I_\lambda \left( \gamma_{\lambda,n} (s) \right)
			\\
			= \ &
			I_{\lambda,\infty} \left( \gamma_{\lambda,n} (s) \right) 
			+ \frac{1}{2} \int_{\R} \left( V(x) - V_\infty \right) s^2 \left( w_{\lambda,n} + w_{\lambda,-n} \right)^2 \rd x
			\\
			= \ & I_{\lambda,\infty} \left( s w_{\lambda,n} \right) + I_{\lambda,\infty} \left( s w_{\lambda,-n} \right) 
			+ s^2 \int_{\R} w_{\lambda,n}' w_{\lambda,-n}' + V_\infty w_{\lambda,n} w_{\lambda,-n} \, \rd x 
			\\
			& - \lambda \int_{\R} 
			F \big( s \left( w_{\lambda,n} + w_{\lambda,-n} \right) \big) 
			- F \left( s w_{\lambda,n} \right) - F \left( s w_{\lambda,-n}  \right) \rd x \\
			&+ \frac{1}{2} \int_{\R} \left( V(x) - V_\infty \right) s^2 \left( w_{\lambda,n} + w_{\lambda,-n} \right)^2 \rd x
			\\
			= \ & 2 I_{\lambda,\infty} \left( s w_{\lambda,n} \right) 
			\\
			& 
			- \lambda \int_{\R} 
			\biggl\{
			F \big( s \left( w_{\lambda,n} + w_{\lambda,-n} \right) \big) 
			- F \left( s w_{\lambda,n} \right) - F \left( s w_{\lambda,-n}  \right) 
			\\
			&\qquad\qquad
			- \frac{1}{2} f \left( s w_{\lambda,n} \right) s w_{\lambda,-n} - \frac{1}{2} f \left( s w_{\lambda,-n} \right) s w_{\lambda,n} 
			\biggr\} \rd x
			\\
			& + \frac{\lambda}{2} \int_{\R} 
			s^2 f( w_{\lambda,n} ) w_{\lambda,-n} + s^2 f(w_{\lambda,-n}) w_{\lambda,n} 
			- f( s w_{\lambda,n} ) s w_{\lambda,-n} - f( s w_{\lambda,-n} ) s w_{\lambda,n} \, \rd x
			\\
			& + \frac{1}{2} \int_{\R} \left( V(x) - V_\infty \right) s^2 \left( w_{\lambda,n} + w_{\lambda,-n} \right)^2 \rd x
			\\
			=: \ & 2 I_{\lambda,\infty} ( s w_{\lambda,n} ) 
			- L_1(n,s) + L_2(n,s) + L_3(n,s).
		\end{aligned}
	\end{equation}

Let $s_{\lambda,n} \in (0,T_0)$ satisfy 
	\[
		\max_{0 \leq s \leq T_0} I_{\lambda} \big( \gamma_{\lambda,n} (s) \big) 
		= I_\lambda \big( \gamma_{\lambda,n} \left( s_{\lambda,n} \right) \big).
	\]
We remark that 
	\[
		\lim_{n \to \infty} I_{\lambda} \left( \gamma_{\lambda,n} (s) \right) 
		= 2 I_{\lambda,\infty} \left( s w_{\lambda} \right) \quad \text{uniformly in $s \in [0,T_0]$}.
	\]
Furthermore, by the same reason as in the beginning of proof of \cref{proposition:5.2}, 
it follows from \eqref{eq:6.9} and (f6) that 
	\begin{equation}\label{eq:6.11}
		I_{\lambda,\infty} \left( s w_{\lambda} \right) < I_{\lambda, \infty} \left( w_\lambda \right) 
		\quad \text{for every $s \in [0,T_0] \setminus \{1\}$}. 
	\end{equation}
Therefore, for each $\lambda \geq \lambda_5$, we obtain 
	\begin{equation}\label{eq:6.12}
		s_{\lambda,n} \to 1 \quad \text{as $n \to \infty$}.
	\end{equation}

To prove \eqref{eq:6.8}, from \eqref{eq:6.10} and \eqref{eq:6.11}, it is enough to show that for sufficiently large $n$, 
	\begin{equation}\label{eq:6.13}
		\lambda^{ 2/(p-2) } \big\{ - L_1 \left( n, s_{\lambda,n} \right) 
		+ L_2 \left( n , s_{\lambda,n} \right) + L_3 \left( n, s_{\lambda,n} \right) \big\} 
		< 0.
	\end{equation}
We first treat $\lambda^{2/(p-2)} L_3(n, s_{\lambda,n} )$. 
By \eqref{eq:6.12}, we may assume $s_{\lambda,n} \leq 2$. 
It follows from (V6'), \cref{proposition:6.1} and $\alpha > 2 \sqrt{V_\infty}$ that 
	\begin{equation}\label{eq:6.14}
		\begin{aligned}
			&\lambda^{2/(p-2)} L_3 \left( n ,s_{\lambda,n} \right) 
			\\
			\leq \ & \frac{s_{\lambda,n}^2}{2} \kappa \int_{\R} \exp \left( - \alpha |x| \right) 
			\left( v_{\lambda,n} + v_{\lambda,-n} \right)^2 \rd x 
			\\
			\leq \ &4 \kappa \int_{\R} \exp \left( - \alpha |x| \right) \left(  v_{\lambda,n}^2 + v_{\lambda,-n}^2 \right) \rd x
			\\
			\leq \ & 8 C_2^2 \kappa
			\int_{0}^\infty \exp \left( - \alpha x \right) 
			\left\{ \exp \left( -2 \sqrt{V_\infty} \left| x- n \right| \right) + \exp \left( -2 \sqrt{V_\infty} \left( x + n \right) \right) \right\} \rd x
			\\
			= \ & 8 C_2^2 \kappa 
			\Biggl\{ \int_0^n \exp \left( - \alpha x - 2\sqrt{V_\infty} \left( n -x \right) \right) \rd x 
			\\
			&\qquad\qquad 
			+ \int_n^\infty \exp \left( - \left( \alpha + 2 \sqrt{V_\infty}   \right) x \right) \exp \left( 2\sqrt{V_\infty} \,n \right) \rd x 
			+ \frac{ \exp \left(  -2 \sqrt{V_\infty} \, n \right)}{\alpha + 2 \sqrt{V_\infty}} \Biggr\}
			\\
			= \ &
			8 C_2^2 \kappa \left[ \frac{ \exp \left(  - 2 \sqrt{V_\infty} \, n \right)}
			{\alpha - 2 \sqrt{V_\infty}} \left\{ 1 - \exp \left( - \left( \alpha - 2\sqrt{V_\infty} \right) n \right) \right\}
			+ \frac{\exp \left( -\alpha n\right)}
			{\alpha + 2 \sqrt{V_\infty}} + \frac{ \exp \left(  -2 \sqrt{V_\infty} \, n  \right)}{\alpha + 2 \sqrt{V_\infty}} \right]
			\\
			\leq \ & 
			8 C_2^2 \kappa \left\{ \frac{2\alpha}{\alpha^2 - 4 V_\infty} \exp \left( -2 \sqrt{V_\infty} \, n \right) 
			+ \frac{\exp \left( -\alpha n\right)}{\alpha + 2 \sqrt{V_\infty}}
			\right\}.
 		\end{aligned}
	\end{equation} 

Next, we consider $\lambda^{2/(p-2)} L_2(n, s_{\lambda,n} )$. 
By (f5) and \eqref{eq:6.12}, we have 
	\[
		\begin{aligned}
			s_{\lambda,n}^2 f \big( w_{\lambda, \pm n} (x) \big) 
			&= \left( 1 + o_{\lambda,n}(1) \right) \left( 1 + o_\lambda (1) \right) w_{\lambda,\pm n}^{p-1} (x), 
			\\
			s_{\lambda,n} f \left( s_{\lambda,n} w_{\lambda, \pm n} (x) \right) 
			&= \left( 1 + o_{\lambda,n}(1) \right) \left( 1 + o_\lambda (1) \right) 
			s_{\lambda,  n}^{p-1} w_{\lambda, \pm n}^{p-1} (x) 
			\\
			&= \left( 1 + o_{\lambda,n}(1) \right) \left( 1 + o_\lambda (1) \right) w_{\lambda, \pm n}^{p-1} (x)
		\end{aligned}
	\]
uniformly with respect to $x \in \R$ where for each fixed $\lambda$, 
$o_{\lambda,n} (1) \to 0$ as $n \to \infty$. 
Hence, \cref{proposition:6.1} and \cite[Proposition 1.2]{BL90} give
	\begin{equation}\label{eq:6.15}
		\begin{aligned}
			&\left| \lambda_n^{2/(p-2)} L_{2} \left( n, s_{\lambda,n} \right) \right| 
			\\
			\leq \ & \lambda_n^{p/(p-2)} 
			\left\{ o_{\lambda,n}(1) + o_\lambda (1) \right\} 
			\int_{\R} w_{\lambda,n}^{p-1} w_{\lambda,-n} + w_{\lambda,-n}^{p-1} w_{\lambda,n} \, \rd x
			\\ 
			\leq \ & \left\{ o_{\lambda,n}(1) + o_\lambda (1) \right\} 
			\int_{\R} \exp \left( - (p-1) \sqrt{V_\infty} \left| x -n \right| \right) \exp \left(  - \sqrt{V_\infty} \left| x + n \right|\right) 
			\rd x
			\\
			\leq \ & \left\{ o_{\lambda,n}(1) + o_\lambda (1) \right\} \exp \left( -2 \sqrt{V_\infty} \, n \right).
		\end{aligned}
	\end{equation}

	Finally, we consider $\lambda^{2/(p-2)} L_1(n, s_{\lambda,n} )$. 
By \eqref{eq:6.9}, (f5) and (f6), we can apply \cref{lemma:5.3} (i) to get 
	\begin{multline*}
		F \big( s_{\lambda,n} \left( w_{\lambda,n} + w_{\lambda,-n} \right) \big) 
		- F \left( s_{\lambda,n} w_{\lambda,n}  \right) - F \left( s_{\lambda,n} w_{\lambda,-n} \right) 
		\\
		- \frac{1}{2} \big\{ f \left( s_{\lambda,n} w_{\lambda,n} \right) s_{\lambda,n} w_{\lambda,-n}  
		+ f \left( s_{\lambda,n} w_{\lambda,-n} \right) s_{\lambda,n} w_{\lambda,n}  \big\} 
		\geq 0 \quad \text{in}  \ \R.
	\end{multline*}
Thus, one has
	\[
		\begin{aligned}
			&- \lambda^{2/(p-2)} L_1 \left( n, s_{\lambda,n} \right) 
			\\
			\leq \, & 
			-\lambda^{p/(p-2)} \int_{|x-n| \leq 1} 
			\Bigl[ F \left( s_{\lambda,n} \left( w_{\lambda,n} + w_{\lambda,-n} \right) \right) 
			- F \left( s_{\lambda,n} w_{\lambda,n} \right)  - F \left( s_{\lambda,n} w_{\lambda,-n} \right)  
			\\
			& \hspace{3.5cm}  - \frac{1}{2} \left\{  f \left( s_{\lambda,n} w_{\lambda,n} \right) s_{\lambda,n} w_{\lambda,-n} 
			+ f ( s_{\lambda,n} w_{\lambda,-n} ) s_{\lambda,n} w_{\lambda, n} \right\}  \Bigr] \, \rd x.
		\end{aligned}
	\]
Since we may assume $s_{\lambda,n} \leq 2$ by taking a large $n$, 
we see from \eqref{eq:6.9} that for each $x \in [n-1,n+1]$
	\[
		f \big( s_{\lambda,n} w_{\lambda,-n} (x) \big) \leq 2^p w_{\lambda,-n}^{p-1} (x), \quad 
		F \big( s_{\lambda,n} w_{\lambda,-n} (x) \big) 
		\leq \frac{2^{p+1}}{p} w_{\lambda,-n}^p(x). 
	\]
Hence, it holds that
	\begin{equation}\label{eq:6.16}
		\begin{aligned}
			& - \lambda^{2/(p-2)} L_1 \left( n, s_{\lambda,n} \right) 
			\\
			\leq &
			-\lambda^{p/(p-2)} 
			\int_{|x-n| \leq 1} 
			F \big( s_{\lambda,n} \left( w_{\lambda,n} + w_{\lambda,-n} \right) \big) 
			- F \left( s_{\lambda,n} w_{\lambda,n} \right)  - \frac{1}{2} f \left( s_{\lambda,n} w_{\lambda,n} \right) s_{\lambda,n} w_{\lambda,-n} \,  \rd x
			\\
			& + \int_{|x-n| \leq 1} 
			2^{p+1} \left( v_{\lambda,-n}^p + v_{\lambda,-n}^{p-1} v_{\lambda,n} \right) \rd x.
		\end{aligned}
	\end{equation}
For the last term, \cref{proposition:6.1} yields that 
	\begin{equation}\label{eq:6.17}
		\begin{aligned}
			&\int_{|x-n|\leq 1} 2^{p+1} \left( v_{\lambda,-n}^p + v_{\lambda,-n}^{p-1} v_{\lambda,n} \right) \rd x 
			\\
			\leq \ & 
			2^{p+1}C_2^{p} \int_{|x-n| \leq 1} 
			\left\{ \exp \left( - p \sqrt{V_\infty} \left| x + n \right|  \right) 
			+ \exp \left( - \left( p -1 \right) \sqrt{V_\infty} \left| x+n \right| \right) \right\} \rd x
			\\
			\leq \ & C \exp \left( - 2 \left( p-1 \right) \sqrt{V_\infty} \, n \right),
		\end{aligned}
	\end{equation}
for some $C=C(V_\infty,p) > 0$. 

On the other hand, by (f5), (f6) and \eqref{eq:6.9} with \eqref{eq:6.12}, for sufficiently large $n$, one has 
	\[
		\begin{aligned}
			&F \left( s_{\lambda,n} \big( w_{\lambda,n} + w_{\lambda,-n} \right) \big) - F \left( s_{\lambda,n} w_{\lambda,n} \right) 
			- \frac{1}{2} f \left( s_{\lambda,n} w_{\lambda,n} \right) s_{\lambda,n} w_{\lambda,-n}
			\\
			= \ & 
			\int_0^1 
			\left\{ f \left( s_{\lambda,n} w_{\lambda, n} + \theta s_{\lambda,n} w_{\lambda,-n} \right) 
			- \frac{1}{2} f \left( s_{\lambda,n} w_{\lambda,n} \right) 
			\right\} s_{\lambda,n} w_{\lambda,-n} \, \rd \theta 
			\\
			\geq \ & 
			\frac{1}{2} f \left( s_{\lambda,n} w_{\lambda,n} \right) s_{\lambda,n} w_{\lambda,-n} 
			\geq \frac{1}{4} \left( s_{\lambda,n} w_{\lambda,n} \right)^{p-1} s_{\lambda,n} w_{\lambda,-n}.
		\end{aligned}
	\]
Hence, \cref{proposition:6.1} and \eqref{eq:6.12} give 
	\begin{equation}\label{eq:6.18}
		\begin{aligned}
			&- \lambda^{p/(p-2)} \int_{|x-n| \leq 1} 
			F \big( s_{\lambda,n} \left( w_{\lambda,n} + w_{\lambda,-n} \right) \big) - F \left( s_{\lambda,n} w_{\lambda,n} \right) 
			- \frac{1}{2} f \left( s_{\lambda,n} w_{\lambda,n} \right) s_{\lambda,n} w_{\lambda,-n} \, \rd x
			\\
			\leq & - \frac{C_1^p}{4} \int_{|x-n| \leq 1}  \left( 1 + o_{\lambda,n}(1) \right) 
			\exp \left( - \left( p-1 \right) \sqrt{V_\infty} \left| x - n \right| \right) 
			\exp \left( - \sqrt{V_\infty} \left| x + n\right| \right) \rd x
			\\
			\leq & -c_1 \left( 1 + o_{\lambda,n} (1) \right) \exp \left( - 2 \sqrt{V_\infty} \, n \right)
		\end{aligned}
	\end{equation}
for some $c_1=c_1(V_\infty,p)>0$. From \eqref{eq:6.16}, \eqref{eq:6.17} and \eqref{eq:6.18}, it follows that 
	\begin{equation}\label{eq:6.19}
		- \lambda^{2/(p-2)} L_1 \left( n, s_{\lambda,n} \right) 
		\leq 
		- c_1 \exp \left( - 2 \sqrt{V_\infty} \, n \right) + C \exp \left( - 2 \left( p -1 \right) \sqrt{V_\infty} \, n \right).
	\end{equation}

Now by \eqref{eq:6.14}, \eqref{eq:6.15} and \eqref{eq:6.19}, we obtain 
	\begin{equation}\label{eq:6.20}
		\begin{aligned}
			&\lambda^{2/(p-2)} \left\{ - L_1 \left( n, s_{\lambda,n} \right) + L_2 \left( n, s_{\lambda,n} \right) 
			+ L_3 \left( n, s_{\lambda,n} \right) \right\} 
			\\
			\leq \ & 
			8 C_2^2 \kappa \left\{ \frac{2\alpha}{\alpha^2 - 4V_\infty} \exp \left( -2 \sqrt{V_\infty} \, n \right) 
			+ \frac{\exp \left( - \alpha n \right)}{\alpha + 2 \sqrt{V_\infty}}  \right\} 
			+ \left\{ o_{\lambda,n}(1) + o_\lambda (1) \right\} \exp \left( -2 \sqrt{V_\infty} \, n \right)
			\\
			& \quad - c_1 \left( 1 + o_{\lambda,n} (1) \right) \exp \left( - 2 \sqrt{V_\infty} \, n \right) 
			+ C \exp \left( -2  (p-1) \sqrt{V_\infty} \, n \right)
			\\
			= \ & 
			\left\{ \frac{16C_2^2 \kappa \alpha }{\alpha^2 - 4 V_\infty} + o_{\lambda,n}(1) + o_\lambda (1) - c_1 \right\} 
			\exp \left( - 2 \sqrt{V_\infty} \, n \right) 
			\\
			& \quad + C \left\{ \exp \left( - \alpha n \right) + \exp \left( -2  (p-1) \sqrt{V_\infty} \, n \right) \right\}.
		\end{aligned}
	\end{equation}
Choose $C_0$ and $\wt{C}_0$ in \eqref{eq:6.7} as
	\[
		C_0 \left( V_\infty,p \right) := c_1 (V_\infty,p), \quad 
		\wt{C}_0 \left( V_\infty, p \right) := 16 C_2^2.
	\]
Next, take a $\lambda_5(V_\infty,f)$ such that if $\lambda \geq \lambda_5$, then 
	\[
		\left| o_{\lambda}(1) \right| \leq \frac{1}{2} \left\{ C_0 -  \frac{\wt{C}_0\kappa \alpha}{\alpha^2 - 4V_\infty} \right\}.
	\]
After fixing $\lambda \geq \lambda_5$, by taking a large $n$ and noting that $\alpha > 2 \sqrt{V_\infty}$ and that $p-1 > 1$, 
we see from \eqref{eq:6.20} and \eqref{eq:6.7} that \eqref{eq:6.13} holds. 
Therefore, \eqref{eq:6.8} holds and we complete the proof. 
\end{proof}

	Now \cref{theorem:1.6} can be proved in a similar way to \cref{theorem:1.5} 
with the help of \cref{proposition:6.2}. 
Therefore, we omit the details.

	
%
%

\medskip
\subsection*{Acknowledgment}
This work was supported by JSPS KAKENHI Grant Numbers JP19K03590, JP19H01797, JP18K03362, JP21K03317
and by JSPS-NSFC joint research project \lq\lq Variational study of nonlinear PDEs" 
and by the Research Institute for Mathematical Sciences, an International Joint
Usage/Research Center located in Kyoto University.

\end{document}